\title[$K$- and $L$-theory of group rings]{$K$- and $L$-theory of group rings}
\author[W. L\"uck]{Wolfgang L\"uck
\thanks{The work was financially supported by the Leibniz-Preis of the author. 
The author wishes to thank several members and guests of the topology group in M\"unster for helpful comments.}
}
\DeclareMathAlphabet{\matheurm}{U}{eur}{m}{n}
\newcommand{\EGF}[2]{E_{#2}(#1)}               % klassifizierender
\newcommand{\eub}[1]{\underline{E}#1}
\newcommand{\edub}[1]{\underline{\underline{E}}#1}
\newcommand{\Or}{\matheurm{Or}}
\DeclareMathOperator{\CAT}{CAT}
\DeclareMathOperator{\class}{class}
\DeclareMathOperator{\colim}{colim}
\DeclareMathOperator{\con}{con}
\DeclareMathOperator{\HS}{HS}
\DeclareMathOperator{\Out}{Out}
\DeclareMathOperator{\pt}{pt}
\DeclareMathOperator{\sign}{sign}
\DeclareMathOperator{\topo}{top}
\DeclareMathOperator{\tr}{tr}
\DeclareMathOperator{\Wh}{Wh}
\newcommand{\Fin}{{\mathcal{F}\text{in}}}
\newcommand{\FJ}{{\mathcal{F}\!\mathcal{J}}}
\newcommand{\VCyc}{{\mathcal{V}\mathcal{C}\text{yc}}}
\newcommand{\Tr}{\mathcal{T}\!\mathcal{R}}
\newcommand{\All}{\mathcal{A}\mathcal{L}\mathcal{L}}
 \newcommand{\IC}{\mathbb{C}}
 \newcommand{\IQ}{\mathbb{Q}}
 \newcommand{\IR}{\mathbb{R}}
 \newcommand{\IZ}{\mathbb{Z}}
 \newcommand{\calf}{\mathcal{F}}
 \newcommand{\calh}{\mathcal{H}}
 \newcommand{\call}{\mathcal{L}}
 \newcommand{\calm}{\mathcal{M}}
 \newcommand{\caln}{\mathcal{N}}
 \newcommand{\cals}{\mathcal{S}}
 \newcommand{\calu}{\mathcal{U}}
 \newcommand{\bfE}{{\mathbf E}}
 \newcommand{\bfK}{{\mathbf K}}
 \newcommand{\bfL}{{\mathbf L}}
\newcounter{commentcounter}
\theoremstyle{plain}
\newtheorem{theorem}{Theorem}[section]
\newtheorem{conjecture}[theorem]{Conjecture}
\newtheorem*{theorem*}{Theorem}
\theoremstyle{definition}
\newtheorem{remark}[theorem]{Remark}
\theoremstyle{remark}
\let\c@equation=\c@theorem\makeatother
\newcommand{\version}[1]                %marks the date of last editing and compilation
{\begin{center} last edited on #1\\
last compiled on \today\\
name of texfile: \jobname
\end{center}
}
\begin{document}

\begin{abstract} This article will explore  the
$K$- and $L$-theory of group rings and their applications to algebra, geometry and topology.
The Farrell-Jones Conjecture characterizes $K$- and $L$-theory groups. It
has many implications, including the Borel and Novikov
Conjectures for topological rigidity.
Its current status, and many of its consequences are surveyed.
\end{abstract}

% Your AMS 200 Classification should come here %
\begin{classification}
Primary 18F25; Secondary 57XX.
\end{classification}

% Any keywords %
\begin{keywords}
$K$- and $L$-theory, group rings, Farrell-Jones Conjecture, topological rigidity.
\end{keywords}

% Do not remove next line %
\maketitle

% Now the text of your article starts

%%%%%%%%%%%%%%%%%%%%%%%%%%%%%%%%%%%%%%%%%%%%%%%%%%%%%%%%%%%%%%%%%%%%%%%%%%%%%%%%%
%%%%%%%%%%%%%%%%%%%%%%%%%%%%%%%%%% Introduction %%%%%%%%%%%%%%%%%%%%%%%%%%%%%%%%%
%%%%%%%%%%%%%%%%%%%%%%%%%%%%%%%%%%%%%%%%%%%%%%%%%%%%%%%%%%%%%%%%%%%%%%%%%%%%%%%%%

\typeout{----------------------   Section 0: Introduction ---------------------}

\setcounter{section}{-1}
\section{Introduction}

The algebraic $K$- and $L$-theory of group rings --- $K_n(RG)$ and $L_n(RG)$ for
a ring $R$ and a group $G$ --- are highly significant, but are very hard to
compute when $G$ is infinite. The main ingredient for their analysis is the
Farrell-Jones Conjecture. It identifies them with certain equivariant homology
theories evaluated on the classifying space for the family of virtually cyclic
subgroups of $G$. Roughly speaking, the Farrell-Jones Conjecture predicts that
one can compute the values of these $K$- and $L$-groups for $RG$ if one
understands all of the values for $RH$, where $H$ runs through the virtually
cyclic subgroups of $G$.

Why is the Farrell-Jones Conjecture so important? One reason is that it plays an
important role in the classification and geometry of manifolds. A second reason
is that it implies a variety of well-known conjectures, such as the ones due to
Bass, Borel, Kaplansky and Novikov. (These conjectures are explained in
Section~\ref{sec:Some_well-known_conjectures}.) There are many groups for which
these conjectures were previously unknown but are now consequences of the proof
that they satisfy the Farrell-Jones Conjecture. A third reason is that most of
the explicit computations of $K$- and $L$-theory of group rings for infinite
groups are based on the Farrell-Jones Conjecture, since it identifies them with
equivariant homology groups which are more accessible via standard tools from
algebraic topology and geometry (see Section~\ref{sec:Computational_aspects}).

The rather complicated general formulation of the Farrell-Jones Conjecture is
given in
Section~\ref{sec:The_general_formulation_of_the_Farrell-Jones_Conjecture}. The
much easier, but already very interesting, special case of a torsionfree group
is discussed in
Section~\ref{sec:The_Farrell-Jones_Conjecture_for_torsionfree_groups}. In this
situation the $K$- and $L$-groups are identified with certain homology theories
applied to the classifying space $BG$.

The recent proofs of the Farrell-Jones Conjecture for hyperbolic groups and
$\CAT(0)$-groups are deep and technically very involved. Nonetheless, we give a
glimpse of the key ideas in Section~\ref{sec:Methods_of-Proof}. In each of these
proofs there is decisive input coming from the geometry of the groups that is
reminiscent of non-positive curvature. In order to exploit these geometric
properties one needs to employ controlled topology and construct flow spaces
that mimic the geodesic flow on a Riemannian manifold.

The class of groups for which the Farrell-Jones Conjecture is known is further
extended by the fact that it has certain inheritance properties. For instance,
subgroups of direct products of finitely many hyperbolic groups and directed
colimits of hyperbolic groups belong to this class. Hence, there are many
examples of exotic groups, such as groups with expanders, that satisfy the
Farrell-Jones Conjecture because they are constructed as such colimits. There
are of course groups for which the Farrell-Jones Conjecture has not been proved,
like solvable groups, but there is no example or property of a group known that
threatens to produce a counterexample. Nevertheless, there may well be
counterexamples and the challenge is to develop new tools to find and construct
them.

The status of the Farrell-Jones Conjecture is given in
Section~\ref{sec:The_status_of_the_Farrell-Jones_Conjecture}, and open problems
are discussed in Section~\ref{sec:Open_Problems}.

%%%%%%%%%%%%%%%%%%%%%%%%%%%%%%%%%%%%%%%%%%%%%%%%%%%%%%%%%%%%%%%%%%%%%%%%%%%%%%%%%%%%%%%%%
%%%%%%%%%%%%%%%%%%%%%%%%%%%%%%%%% Section 1 %%%%%%%%%%%%%%%%%%%%%%%%%%%%%%%%%%%%%%%%%%%%%
%%%%%%%%%%%%%%%%%%%%%%%%%%%%%%%%%%%%%%%%%%%%%%%%%%%%%%%%%%%%%%%%%%%%%%%%%%%%%%%%%%%%%%%%%

\typeout{---------------------------------------  Section 1:  --------------------------}

\section{Some well-known conjectures}
\label{sec:Some_well-known_conjectures}

In this section we briefly recall some well-known conjectures.  They address
topics from different areas, including topology, algebra and geometric group
theory.  They have one --- at first sight not at all obvious --- common
feature. Namely, their solution is related to questions about the $K$- and
$L$-theory of group rings.

%%%%%%%%%%%%%%%%%%%%%%%%%%%%%%%%%%%%%%%%%%%%%%%%%%%%%%%%%%%%%%%%%%%%%%%%%%%%%%%%%%%%%%%%%

\subsection{Borel Conjecture}

A closed manifold $M$ is said to be \emph{topologically rigid} if every homotopy
equivalence from a closed manifold to $M$ is homotopic to a homeomorphism.  In
particular, if $M$ is topologically rigid, then every manifold homotopy
equivalent to $M$ is homeomorphic to $M$.  For example, the spheres $S^n$ are
topologically rigid, as predicted by the \emph{Poincar\'e Conjecture}. A
connected manifold is called \emph{aspherical} if its homotopy groups in degree
$\ge 2$ are trivial. A sphere $S^n$ for $n \ge 2$ has trivial fundamental group, 
but its higher homotopy groups are very complicated. Aspherical manifolds,
on the other hand, have complicated fundamental groups and trivial higher
homotopy groups. Examples of closed aspherical manifolds are closed Riemannian
manifolds with non-positive sectional curvature, and double quotients
$G\backslash L /K$ for a connected Lie group $L$ with $K \subseteq L$ a maximal
compact subgroup and $G \subseteq L$ a torsionfree cocompact discrete subgroup. More
information about aspherical manifolds can be found, for instance,
in~\cite{Lueck(2009asph)}.

\begin{conjecture}[Borel Conjecture]
 \label{con:Borel}
 Closed aspherical manifolds are topologically rigid.
\end{conjecture}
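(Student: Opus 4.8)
The plan is to deduce topological rigidity of a closed aspherical manifold $M$ with fundamental group $G = \pi_1(M)$ from the Farrell-Jones Conjecture for $G$ by means of surgery theory, at least in dimensions $\geq 5$. First I would reformulate the assertion: $M$ is topologically rigid if and only if the topological structure set $\mathcal{S}^{\topo}(M)$, i.e. the set of equivalence classes of homotopy equivalences $f \colon N \to M$ from closed manifolds $N$ (with $(N,f)\sim(N',f')$ when there is a homeomorphism $g$ with $f'\circ g \simeq f$), consists of a single element. The Sullivan-Wall surgery exact sequence then expresses $\mathcal{S}^{\topo}(M)$ in terms of the normal invariants $[M, G/\mathrm{TOP}]$ and the algebraic $L$-groups $L_*(\IZ G)$, with the $L$-groups entering the sequence through the $L$-theoretic assembly map.

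Second, I would exploit asphericity. Since $M$ is aspherical, the classifying map $M \to BG$ is a homotopy equivalence, so the normal-invariant term is identified with the $L$-theory homology $H_*(BG; \mathbf{L})$, and the map $H_*(BG; \mathbf{L}) \to L_*(\IZ G)$ appearing in the surgery sequence is precisely the $L$-theoretic assembly map that the Farrell-Jones Conjecture addresses. For a torsionfree group the $L$-theoretic Farrell-Jones Conjecture asserts that this assembly map is an isomorphism in all degrees. At the same time the $K$-theoretic Farrell-Jones Conjecture for torsionfree $G$ yields $\Wh(G) = 0$ and $\widetilde{K}_0(\IZ G) = 0$ (and vanishing of the negative $K$-groups), which makes the various decorations on the $L$-groups agree and removes the Rothenberg correction terms. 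Feeding these isomorphisms and vanishing results into the surgery exact sequence, a diagram chase forces $\mathcal{S}^{\topo}(M)$ to be a single point, which is topological rigidity.

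Third, I would deal with the dimension hypotheses, which are not cosmetic. The surgery machinery used above requires $\dim M \geq 5$. The case $\dim M = 3$ is handled separately: by Thurston geometrization a closed aspherical $3$-manifold is determined up to homeomorphism by its fundamental group. The case $\dim M = 4$ is only accessible when $G$ is a ``good'' fundamental group in the sense of Freedman-Quinn, so rigidity in dimension $4$ remains conditional in general.

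The main obstacle is clear: everything above is predicated on $G$ satisfying the $K$- and $L$-theoretic Farrell-Jones Conjecture, and that is the genuinely deep geometric input — established for hyperbolic and $\CAT(0)$-groups and stable under the known inheritance properties, but open for general groups. Relative to that input, the homotopy-theoretic bookkeeping in the surgery sequence is essentially routine; the remaining irreducible difficulties are the low-dimensional cases $\dim M = 3, 4$, which lie outside the reach of high-dimensional surgery altogether.
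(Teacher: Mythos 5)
The statement you are asked to prove is Conjecture~\ref{con:Borel} itself, and it is a genuinely open conjecture: the paper does not (and cannot) give a proof of it, and neither does your proposal. What you have written is, in substance, exactly the paper's own Theorem~\ref{the:The_FJC_implies_the_Borel_Conjecture_in_dimension_ge_5} and its sketch of proof: reformulate rigidity as $\cals^{\topo}(M)$ being a single point, feed the $L$-theoretic assembly isomorphism of Conjecture~\ref{con:L-theoretic_FJC_torsionfree} into the surgery exact sequence~\eqref{surgery_sequence}, and use the $K$-theoretic Conjecture~\ref{con:K-theoretic_FJC;torsionfree_regular} (vanishing of $\Wh(G)$, $\widetilde{K}_0(\IZ G)$ and the negative $K$-groups) together with the Rothenberg sequences to pass between decorations. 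That reduction is correct and is the standard route, but it is conditional: it establishes ``Farrell--Jones implies Borel in dimension $\ge 5$,'' not the Borel Conjecture. The Farrell--Jones Conjecture is known only for particular classes of groups (hyperbolic groups, $\CAT(0)$-groups, and groups obtained from these by the inheritance properties, Theorems~\ref{the:FJC_for_hyperbolic_groups}, \ref{the:FJC_for_CAT(0)_groups} and~\ref{the:inheritance_properties}); it is open already for solvable groups, whereas Conjecture~\ref{con:Borel} quantifies over \emph{all} closed aspherical manifolds, hence over all of their fundamental groups. So the decisive step --- the assembly map being an isomorphism for an arbitrary such fundamental group --- is missing, and no amount of surgery-theoretic bookkeeping can supply it.

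Two smaller points. First, even granting the Farrell--Jones input, your treatment of low dimensions does not close the statement as formulated: you yourself note that dimension $4$ remains conditional on the fundamental group being good in the sense of Freedman--Quinn, so the argument would at best prove the conjecture outside dimension $4$ for groups in the Farrell--Jones class. Second, if your goal was the conditional theorem rather than the conjecture, your write-up matches the paper's approach essentially verbatim (including the identification of the surgery obstruction map with the $1$-connective $L$-theory assembly map and the decoration change via the $K$-theoretic hypotheses), so there is nothing methodologically new to compare; the only honest conclusion is that the unconditional statement remains out of reach.
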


In particular the Borel Conjecture predicts that two closed aspherical manifolds
are homeomorphic if and only if their fundamental groups are isomorphic. Hence
the Borel Conjecture may be viewed as the topological version of \emph{Mostow
 rigidity}. One version of Mostow rigidity says that two hyperbolic closed
manifolds of dimension $\ge 3$ are isometrically diffeomorphic if and only if
their fundamental groups are isomorphic.

It is not true that any homotopy equivalence of aspherical closed smooth
manifolds is homotopic to a diffeomorphism.  The $n$-dimensional torus for $n
\ge 5$ yields a counterexample (see~\cite[15A]{Wall(1999)}). Counterexamples
with sectional curvature pinched arbitrarily close to $-1$ are given 
in~\cite[Theorem 1.1]{Farrell-Jones(1989b)}.

For more information about topologically rigid manifolds which are not
necessarily aspherical, the reader is referred to~\cite{Kreck-Lueck(2009nonasph)}.

%%%%%%%%%%%%%%%%%%%%%%%%%%%%%%%%%%%%%%%%%%%%%%%%%%%%%%%%%%%%%%%%%%%%%%%%%%%%%%%%%%%%%%%%%

\subsection{Fundamental groups of closed manifolds}
\label{subsec:Fundamental_groups_of_closed_manifolds}

The Borel Conjecture is a uniqueness result. There is also an existence part.
The problem is to determine when a given group $G$ is the fundamental group of a
closed aspherical manifold. Let us collect some obvious conditions that a group
$G$ must satisfy so that $G = \pi_1(M)$ for a closed aspherical manifold $M$. It
must be finitely presented, since the fundamental group of any closed manifold
is finitely presented.  Since the cellular $\IZ G$-chain complex of the
universal covering of $M$ yields a finite free $\IZ G$-resolution of the trivial
$\IZ G$-module $\IZ$, the group $G$ must be of type FP, i.e., the trivial 
$\IZ G$-module $\IZ$ possesses a finite projective $\IZ G$-resolution.  Since
$\widetilde{M}$ is a model for the classifying $G$-space $EG$, Poincar\'e
duality implies $H^i(G;\IZ G) \cong H_{\dim(M) - i}(\widetilde{M};\IZ)$, where
$H^i(G;\IZ G)$ is the cohomology of $G$ with coefficients in the $\IZ G$-module
$\IZ G$ and $H_i(\widetilde{M};\IZ)$ is the homology of $\widetilde{M}$ with
integer coefficients. Since $\widetilde{M}$ is contractible, $H^i(G;\IZ G) = 0$
for $i \neq \dim(M)$ and $H^{\dim(M)}(G;\IZ G) \cong \IZ$.  Thus, a group $G$ is
called a \emph{Poincar\'e duality group of dimension $n$} if $G$ is finitely
presented, is of type FP, $H^i(G;\IZ G) = 0$ for $i \neq n$, and 
$H^n(G;\IZ G) \cong \IZ$.

\begin{conjecture}[Poincar\'e duality groups]
 \label{con:Poincare_duality_groups}
 A group $G$ is the fundamental group of a closed aspherical manifold of
 dimension $n$ if and only if $G$ is a Poincar\'e duality group of dimension
 $n$.
\end{conjecture}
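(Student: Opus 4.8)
\noindent\emph{Proof proposal.} The ``only if'' direction is already settled by the discussion preceding the conjecture: if $G=\pi_1(M)$ for a closed aspherical manifold $M$ of dimension $n$, then $M$ is a finite model for $BG$, the cellular $\IZ G$-chain complex of $\widetilde{M}$ is a finite free resolution of $\IZ$, so $G$ is finitely presented and of type FP, and Poincar\'e duality together with the contractibility of $\widetilde{M}$ gives $H^i(G;\IZ G)=0$ for $i\neq n$ and $H^n(G;\IZ G)\cong\IZ$. So the real content is the converse, and I would attack it in two stages: first realize $G$ by a finite aspherical Poincar\'e complex, then convert that Poincar\'e complex into a closed manifold by surgery, invoking the Farrell--Jones Conjecture at both stages.

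First I would produce a finite model for $BG$. Since $G$ is finitely presented and of type FP, a theorem of Wall shows that $BG$ is finitely dominated, so its finiteness obstruction lies in $\widetilde{K}_0(\IZ G)$; the $K$-theoretic Farrell--Jones Conjecture for the torsionfree group $G$ forces $\widetilde{K}_0(\IZ G)=0$ (and $\Wh(G)=0$), so this obstruction vanishes and $BG$ is homotopy equivalent to a finite CW complex. Because $BG$ is aspherical, the Poincar\'e duality of the group $G$ encoded in the conditions on $H^\ast(G;\IZ G)$ translates into Poincar\'e duality for the finite complex $BG$; thus $X:=BG$ is a finite Poincar\'e complex of formal dimension $n$ with $\pi_1(X)\cong G$.

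Next I would run the surgery-theoretic machine. Ranicki's total surgery obstruction assigns to $X$ an element $s(X)\in\IS_n(X)$ that vanishes precisely when $X$ is homotopy equivalent to a closed topological manifold, provided $n\geq 5$. The group $\IS_n(X)$ sits in the algebraic surgery exact sequence $\cdots\to H_n(X;\bfL)\to L_n(\IZ G)\to\IS_n(X)\to H_{n-1}(X;\bfL)\to\cdots$, in which the map $H_n(X;\bfL)\to L_n(\IZ G)$ is the assembly map. The $L$-theoretic Farrell--Jones Conjecture for $G$ asserts exactly that this assembly map is an isomorphism in every degree, so $\IS_n(X)=0$, hence $s(X)=0$, and for $n\geq 5$ we conclude that $BG$ is homotopy equivalent to a closed topological manifold $M$. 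Since $M\simeq BG$ is aspherical with $\pi_1(M)\cong G$ and $\dim M=n$, this yields the desired manifold.

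The main obstacles are threefold. The most serious is that the Farrell--Jones Conjecture is not known for all groups, so this argument only establishes the conjecture for those $G$ that satisfy it --- which is precisely the state of affairs surveyed in this article. Second, surgery needs $n\geq 5$: the cases $n\leq 2$ are classical (e.g.\ $\mathrm{PD}^2$-groups are surface groups), $n=3$ is essentially equivalent to a geometrization-type statement and remains open, and $n=4$ would require topological surgery in dimension four, available only for ``good'' fundamental groups. Third, even granting the Farrell--Jones Conjecture, the total surgery obstruction most naturally produces a closed aspherical ANR homology manifold, and upgrading it to a genuine topological manifold requires the vanishing of Quinn's resolution obstruction, an additional open conjecture for aspherical homology manifolds; so the cleanest currently provable statement only realizes $G$ by a closed aspherical homology manifold for $n\geq 6$. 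I expect this last point, together with the status of the Farrell--Jones Conjecture itself, to be the decisive difficulty.
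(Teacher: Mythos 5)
This statement is one of the open conjectures the paper surveys, not a theorem it proves: the paper offers no proof (only the ``only if'' discussion preceding it and references), so there is nothing for your argument to be compared against except the known conditional results. Your sketch is indeed the standard strategy (finiteness obstruction plus vanishing of $\widetilde{K}_0(\IZ G)$ from the $K$-theoretic Farrell--Jones Conjecture to get a finite aspherical Poincar\'e complex, then Ranicki's total surgery obstruction and the $L$-theoretic assembly map), and you are right that this is how the existing partial results are obtained. But as a proof of the conjecture it has a genuine gap, and you have in effect named it yourself: the argument is conditional on the Farrell--Jones Conjecture for $G$, it needs $n\ge 5$ (with $n=3,4$ genuinely open territory), and even granting all of that it does not produce a manifold.

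The step that fails as written is ``the assembly map is an isomorphism, so $\IS_n(X)=0$, hence $s(X)=0$.'' The algebraic surgery exact sequence that detects manifold structures uses the $1$-connective quadratic $L$-theory spectrum $\bfL\langle 1\rangle$, whereas the Farrell--Jones Conjecture concerns the periodic spectrum $\bfL^{\langle -\infty\rangle}$ (and the decorations are handled by Rothenberg sequences using $\widetilde{K}_0$ and $\Wh$). The comparison between the connective and periodic theories introduces exactly an $L_0(\IZ)\cong\IZ$ ambiguity, so under Farrell--Jones the relevant structure group is not zero but (roughly) a copy of $\IZ$ detected by Quinn's resolution obstruction; the conclusion one can actually draw is realization of $G$ by a closed aspherical ANR homology manifold for $n\ge 6$, as in the Bartels--L\"uck--Weinberger result on hyperbolic groups with sphere boundary cited in the paper. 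Whether that homology manifold can be resolved, or whether aspherical homology manifolds have vanishing Quinn invariant, is itself open. So your middle paragraph overstates what surgery gives, your final paragraph corrects it, and the honest summary is that the conjecture remains open: what you have is a correct account of the best known conditional statement, not a proof.
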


For more information about Poincar\'e duality 
groups, see~\cite{Davis(2000Poin),Johnson+Wall(1972),Wall(1967)}.

%%%%%%%%%%%%%%%%%%%%%%%%%%%%%%%%%%%%%%%%%%%%%%%%%%%%%%%%%%%%%%%%%%%%%%%%%%%%%%%%%%%%%%%%%

\subsection{Novikov Conjecture}
\label{subsec:Novikov_Conjecture}

Let $G$ be a group and $u \colon M \to BG$ be a map from a closed oriented
smooth manifold $M$ to $BG$. Let $\call(M) \in \prod_{k \ge 0} H^k(M;\IQ)$ be
the \emph{$L$-class of $M$}, which is a certain polynomial in the Pontrjagin
classes. Therefore it depends, a priori, on the tangent bundle and hence on the
differentiable structure of $M$.  For $x \in \prod_{k \ge 0} H^k(BG;\IQ)$, define
the \emph{higher signature of $M$ associated to $x$ and $u$} to be the rational number
\begin{eqnarray*}
 \sign_x(M,u) & := & \langle \call(M) \cup u^*x,[M]\rangle.
\end{eqnarray*}
We say that
$\sign_x$ for $x \in \prod_{n \ge 0} H^n(BG;\IQ)$ is \emph{homotopy invariant} if, for two closed
oriented smooth manifolds $M$ and $N$ with reference maps $u\colon M \to BG$ and
$v \colon N \to BG$, we have
\[
\sign_x(M,u) = \sign_x(N,v)
\]
whenever there is an orientation preserving homotopy equivalence $f \colon M \to N$ 
such that $v \circ f$ and $u$ are homotopic.

\begin{conjecture}[Novikov Conjecture]%
 \label{con:Novikov_Conjecture}
 Let $G$ be a group.  Then $\sign_x$ is homotopy invariant for all $x \in
 \prod_{k \ge 0} H^k(BG;\IQ)$.
\end{conjecture}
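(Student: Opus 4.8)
The plan is to reformulate the Novikov Conjecture as a statement in algebraic $L$-theory and then deduce it from the rational injectivity of an assembly map, which in turn is a consequence of the Farrell--Jones Conjecture for $G$. As a first step I would repackage the higher signatures. By the Kronecker pairing and the projection formula one has $\sign_x(M,u) = \langle x, u_*(\call(M) \cap [M])\rangle$ for every $x$, and since rational cohomology classes separate points of rational homology, the collection of all higher signatures $\sign_x(M,u)$, as $x$ ranges over $\prod_{k \ge 0} H^k(BG;\IQ)$, carries exactly the same information as the single homology class
\[
u_*\bigl(\call(M) \cap [M]\bigr) \ \in\ \bigoplus_{k \ge 0} H_{\dim(M) - 4k}(BG;\IQ).
\]
Hence the Novikov Conjecture for $G$ is equivalent to the assertion that this class depends only on the oriented homotopy type of the pair $(M,u)$.

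For the second step I would bring in the algebraic $L$-theory assembly map
\[
A \colon H_n\bigl(BG;\bfL(\IZ)\bigr) \longrightarrow L_n(\IZ G), \qquad n = \dim(M),
\]
where $\bfL(\IZ)$ is the $L$-theory spectrum of $\IZ$, together with the symmetric signature $\sigma(M,u) \in L_n(\IZ G)$ of the symmetric Poincar\'e chain complex $C_*(\widetilde{M}) \otimes_{\IZ\pi_1(M)} \IZ G$ built from the cellular chain complex of the universal covering $\widetilde{M}$ via the homomorphism $u_* \colon \pi_1(M) \to G$. Two facts due to Mishchenko, Wall and Ranicki are decisive: (a) $\sigma(M,u)$ depends only on the oriented homotopy type of $(M,u)$, being built from a chain-level Poincar\'e duality structure; and (b) there is an $L$-theory fundamental class $[M]_{\bfL} \in H_n(M;\bfL(\IZ))$ with $A\bigl(u_*[M]_{\bfL}\bigr) = \sigma(M,u)$, and under the rational identification $H_n(BG;\bfL(\IZ)) \otimes \IQ \cong \bigoplus_{k \ge 0} H_{n-4k}(BG;\IQ)$ the image of $u_*[M]_{\bfL}$ agrees, up to explicit nonzero scalars, with $u_*(\call(M) \cap [M])$. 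Granting (a) and (b): if $A$ is rationally injective, then $u_*(\call(M) \cap [M])$ is recovered from the homotopy invariant $\sigma(M,u)$, hence is itself homotopy invariant, and the first step completes the proof.

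There remains the rational injectivity of $A$. This is exactly the $L$-theoretic Novikov conjecture, and it follows from the Farrell--Jones Conjecture for $G$: the latter identifies $L_n(\IZ G)$ with an equivariant homology theory evaluated on the classifying space $\edub{G}$ for the family of virtually cyclic subgroups, under this identification $A$ becomes the map induced by the projection $EG \to \edub{G}$, and the relative contributions of the finite and virtually cyclic subgroups merely produce further direct summands after tensoring with $\IQ$, so that $A$ becomes rationally (split) injective.

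The main obstacle is unmistakably this last step. Proving rational injectivity of $A$ --- equivalently, establishing the Farrell--Jones Conjecture, or at least its injectivity part --- for a given class of groups is where all the genuine difficulty lies: in the known cases, such as hyperbolic groups and $\CAT(0)$-groups, it rests on controlled topology and on the construction of flow spaces mimicking the geodesic flow on a nonpositively curved Riemannian manifold, while the extension to exotic examples (such as groups with expanders) exploits the inheritance properties under subgroups, finite direct products and directed colimits. By comparison, the homotopy-theoretic reduction carried out in the first two steps is formal.
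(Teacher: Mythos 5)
The statement you were asked to prove is the Novikov Conjecture itself, stated for an \emph{arbitrary} group $G$; it is an open conjecture, and the paper does not (and cannot) give a proof --- it only records, in Remark~\ref{rem:relations}, that the $L$-theoretic Farrell-Jones Conjecture~\ref{con:Farrell-Jones-Conjecture} (or the Baum-Connes Conjecture) implies it, with the actual argument delegated to~\cite{Kreck-Lueck(2005)}. Your two reduction steps --- encoding all higher signatures in the single class $u_*(\call(M)\cap[M])$, and comparing it, via the symmetric signature and the $L$-theoretic fundamental class, with the image of the assembly map $A\colon H_n(BG;\bfL(\IZ)) \to L_n(\IZ G)$ --- are the standard and essentially correct route for that implication, so as an account of ``Farrell-Jones implies Novikov'' your sketch is in the spirit of what the paper asserts.

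The genuine gap is your last step: you invoke ``the Farrell-Jones Conjecture for $G$'' to obtain rational injectivity of $A$, but the statement quantifies over all groups $G$, and the Farrell-Jones Conjecture (even its injectivity part) is known only for particular classes of groups --- hyperbolic groups, $\CAT(0)$-groups, and groups obtained from these by the inheritance properties (Theorems~\ref{the:FJC_for_hyperbolic_groups}, \ref{the:FJC_for_CAT(0)_groups} and~\ref{the:inheritance_properties}). Hence your argument establishes the Novikov Conjecture only conditionally, for groups in $\FJ$ or $\FJ_{\le 1}$, not the conjecture as stated; no unconditional proof exists, which is precisely why the paper formulates it as a conjecture rather than a theorem. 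A secondary caveat: the claim that after tensoring with $\IQ$ the passage from $EG$ to $\edub{G}$ ``merely produces further direct summands'' is itself a nontrivial input --- it rests on equivariant Chern character computations of the kind behind Theorem~\ref{the:complex_coefficients} and on an analysis of the relative terms for $\Fin \subseteq \VCyc$ --- so even the implication you sketch requires more than the formal homotopy-theoretic reduction of your first two steps.
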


The Hirzebruch signature formula says that for $x=1$ the signature
$\sign_1(M,c)$ coincides with the ordinary signature $\sign(M)$ of $M$ if
$\dim(M) = 4n$, and is zero if $\dim(M)$ is not divisible by four.  Obviously
$\sign(M)$ depends only on the oriented homotopy type of $M$ and hence
the Novikov Conjecture~\ref{con:Novikov_Conjecture} is true for $x = 1$. 

A consequence of the Novikov Conjecture~\ref{con:Novikov_Conjecture} is that for 
a homotopy equivalence $f \colon M \to N$ of orientable closed manifolds, we get
$f_*\call(M) = \call(N)$ provided $M$ and $N$ are aspherical. This is surprising since
it is not true in general. Often the $L$-classes are used to distinguish the
homeomorphism or diffeomorphism types of homotopy equivalent closed
manifolds. However, if one believes in the Borel Conjecture~\ref{con:Borel},
then the map $f$ above is homotopic to a homeomorphism and a celebrated result
of Novikov~\cite{Novikov(1965b)} on the topological invariance of
rational Pontrjagin classes says that $f_*\call(M) = \call(N)$ holds for any
homeomorphism of closed manifolds.

For more information about the Novikov Conjecture, see, for instance, 
\cite{Ferry-Ranicki-Rosenberg(1995c), Kreck-Lueck(2005)}.

%%%%%%%%%%%%%%%%%%%%%%%%%%%%%%%%%%%%%%%%%%%%%%%%%%%%%%%%%%%%%%%%%%%%%%%%%%%%%%%%%%%%%%%%%

\subsection{Kaplansky Conjecture}
\label{subsec:Kaplansky_Conjecture}

Let $F$ be a field of characteristic zero. Consider a group $G$. Let $g \in G$
be an element of finite order $|g|$.  Set $N_g = \sum_{i=1}^{|g|} g^i$. Then 
$N_g \cdot N_g = |g| \cdot N_g$. Hence $x = N_g/|g|$ is an idempotent, i.e., $x^2 =x$.  
There are no other constructions known to produce idempotents different from $0$  in $FG$. If
$G$ is torsionfree, this construction yields only the obvious idempotent
$1$. This motivates:

\begin{conjecture}[Kaplansky Conjecture] \label{con:Kaplansky}
 Let $F$ be a field of characteristic zero and let $G$ be a torsionfree group.
 Then the group ring $FG$ contains no idempotents except $0$ and $1$.
\end{conjecture}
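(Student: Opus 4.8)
The Kaplansky Conjecture is not known in general; the plan is to deduce it from the $K$-theoretic Farrell--Jones Conjecture, using only the following weak consequence for a torsionfree group $G$. The torsionfree version of the Conjecture, mentioned in the introduction, identifies $K_n(\IC G)$ with $H_n(BG;\bfK(\IC))$, and since $K_0(\IC) = \IZ$ while $K_q(\IC) = 0$ for $q < 0$, a spectral sequence argument shows that $K_0(\IC G)$ is infinite cyclic, generated by the class of $\IC G$; equivalently $\widetilde{K}_0(\IC G) = 0$, so every finitely generated projective $\IC G$-module is stably free. First I would reduce to the case $F = \IC$: an idempotent $e \in FG$ has only finitely many nonzero coefficients, which generate a subfield of $F$ that is finitely generated over $\IQ$ and hence embeds into $\IC$, and the conditions $e^2 = e$, $e = 0$ and $e = 1$ are insensitive to the ambient field.

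So let $e \in \IC G$ be an idempotent with $e \neq 0$. I would consider the finitely generated projective module $P = e\cdot\IC G$, its class $[P] \in K_0(\IC G)$, and the Hattori--Stallings trace $\HS\colon K_0(\IC G) \to \bigoplus_{(g)} \IC$, the direct sum running over the conjugacy classes $(g)$ of elements of $G$. A routine check shows that $\HS$ sends $[\IC G]$ to the basis vector indexed by the class of $1$, and that the component of $\HS([P])$ at the class of $1$ is the coefficient $r(e) \in \IC$ of the unit element $1 \in G$ in $e$. Hence, by the consequence of the Farrell--Jones Conjecture recorded above, $[P] = n\cdot[\IC G]$ for some integer $n \geq 0$ (the rank of $P$), and applying $\HS$ gives $r(e) = n \in \IZ$. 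On the other hand I would invoke Kaplansky's classical analytic estimate: $\IC G$ embeds into the group von Neumann algebra $\caln(G)$, whose canonical faithful normal trace $\tau$ restricts on $\IC G$ to $a \mapsto r(a)$, and since every idempotent of a finite von Neumann algebra is similar to a self-adjoint projection while $\tau$ is positive and faithful, one gets $0 \leq \tau(e) \leq 1$ with $\tau(e) = 0$ only if $e = 0$ and $\tau(e) = 1$ only if $e = 1$. Combining the two statements, $n = r(e) = \tau(e)$ lies in $\{0,1\}$; since $e \neq 0$ forces $\tau(e) \neq 0$, we get $n = 1$ and therefore $e = 1$. Thus the only idempotents of $\IC G$ are $0$ and $1$.

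The reductions, the computation of the Hattori--Stallings trace, and Kaplansky's von Neumann algebra estimate are classical and essentially formal. The one step carrying real content, and the main obstacle, is the input from the Farrell--Jones Conjecture: establishing $\widetilde{K}_0(\IC G) = 0$ requires the full Farrell--Jones machinery (controlled topology, flow spaces, and the non-positive-curvature-like geometry of $G$). This is precisely why the Kaplansky Conjecture is at present known only for those torsionfree groups for which the Farrell--Jones Conjecture has been verified.
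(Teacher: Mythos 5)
This statement is an open conjecture: the paper offers no proof of it, only the observation (Remark~\ref{rem:FJC_and_Kaplanky}, citing \cite[Theorem~0.12]{Bartels-Lueck-Reich(2008appl)}) that the $K$-theoretic Farrell--Jones Conjecture~\ref{con:K-theoretic_FJC;torsionfree_regular} implies it. Your proposal is exactly that conditional derivation --- $\widetilde{K}_0(\IC G)=0$ from Farrell--Jones as in Remark~\ref{rem:FJC_and_vanishing_midel_K-groups}, integrality of the coefficient of the unit element via the Hattori--Stallings trace, and Kaplansky's faithful-trace estimate in the group von Neumann algebra --- and it is correct as a proof of the implication, which is the same route the paper points to; just be explicit that it establishes ``Farrell--Jones implies Kaplansky'' for the given group rather than the conjecture itself, whose general case remains open.
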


%%%%%%%%%%%%%%%%%%%%%%%%%%%%%%%%%%%%%%%%%%%%%%%%%%%%%%%%%%%%%%%%%%%%%%%%%%%%%%%%%%%%%%%%%

\subsection{Hyperbolic groups with spheres as boundary}
\label{subsec:Hyperbolic_groups_with_spheres_as_boundary}

Let $G$ be a hyperbolic group. One can assign to $G$ its boundary $\partial G$.
For information about the boundaries of hyperbolic
groups, the reader is referred to~\cite{Bridson-Haefliger(1999), Kapovich+Benakli(2002),Lueck(2008sgg)}.
Let $M$ be an $n$-dimensional closed connected Riemannian manifold with negative
sectional curvature.  Then its fundamental group $\pi_1(M)$ is a hyperbolic
group.  The exponential map at a point $x \in M$ yields a diffeomorphism $\exp
\colon T_x\IR^n \to M$, which sends $0$ to $x$, and a linear ray emanating from
$0$ in $T_x\IR^n \cong \IR^n$ is mapped to a geodesic ray in $M$ emanating from
$x$. Hence, it is not surprising that the boundary of $\pi_1(M)$ is $S^{\dim(M)
 -1}$.  This motivates (see Gromov~\cite[page~192]{Gromov(1993)}):

\begin{conjecture}[Hyperbolic groups with spheres as boundary]
 \label{con:Hyperbolic_groups_with_spheres_as_boundary}
 Let $G$ be a hyperbolic group whose boundary $\partial G$ is homeomorphic to
 $S^{n-1}$.  Then $G$ is the fundamental group of an aspherical closed manifold
 of dimension $n$.
\end{conjecture}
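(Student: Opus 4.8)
The plan is to show that the classifying space $BG$ is homotopy equivalent to an aspherical closed topological manifold of dimension $n$; such a manifold automatically has fundamental group $G$ and is aspherical, which proves the conjecture. We assume in addition that $G$ is torsionfree. This restriction is necessary, since a closed aspherical manifold has torsionfree fundamental group, whereas the infinite dihedral group is hyperbolic with $\partial G\cong S^{0}$ and cocompact Fuchsian groups with torsion are hyperbolic with $\partial G\cong S^{1}$; Gromov's question is understood in the torsionfree case, and this is what is proved by Bartels--L\"uck--Weinberger. For torsionfree hyperbolic $G$ the Rips complex is a finite-dimensional contractible free $G$-CW complex with compact quotient, so $BG$ can be taken to be a finite CW complex.

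The first step is to recognise $G$ as a Poincar\'e duality group of dimension $n$ in the sense of Section~\ref{subsec:Fundamental_groups_of_closed_manifolds}. This follows from a theorem of Bestvina and Mess relating the \v{C}ech cohomology of the boundary to group cohomology: for a hyperbolic group one has $H^{i}(G;\IZ G)\cong\check{H}^{i-1}(\partial G;\IZ)$ (reduced \v{C}ech cohomology, with the usual low-degree conventions). Since $\partial G\cong S^{n-1}$, this gives $H^{i}(G;\IZ G)=0$ for $i\neq n$ and $H^{n}(G;\IZ G)\cong\IZ$; combined with the finiteness of $BG$ (so that $G$ is finitely presented and of type FP), this is precisely the definition of a Poincar\'e duality group of dimension $n$. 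By the basic theory of such groups (Wall, Johnson--Wall) the finite complex $BG$ is then a finite $n$-dimensional Poincar\'e complex, with orientation character $w\colon G\to\{\pm1\}$ read off from the $G$-action on $H^{n}(G;\IZ G)\cong\IZ$.

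The second step is surgery theory, packaged as Ranicki's total surgery obstruction. For a finite Poincar\'e complex $X$ of formal dimension $n\ge5$ there is an obstruction $s(X)$ in the quadratic structure group $\mathcal{S}_{n}(X)$ that vanishes if and only if $X$ is homotopy equivalent to a closed topological manifold, and $\mathcal{S}_{\ast}(X)$ fits into the algebraic surgery exact sequence
\[
\cdots \longrightarrow H_{n}(X;\IL_{\bullet}) \longrightarrow L_{n}(\IZ\pi_{1}(X)) \longrightarrow \mathcal{S}_{n}(X) \longrightarrow H_{n-1}(X;\IL_{\bullet}) \longrightarrow \cdots,
\]
the first arrow being the quadratic $L$-theory assembly map. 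We apply this to $X=BG$. The Farrell--Jones Conjecture in $K$- and $L$-theory is known for hyperbolic groups; since $G$ is torsionfree its virtually cyclic subgroups are trivial or infinite cyclic, so the relative assembly map from the trivial family to the family of virtually cyclic subgroups is an isomorphism (no Nil- or UNil-terms occur, and the $K$-theoretic statement makes $\Wh(G)$, $\widetilde{K}_{0}(\IZ G)$ and the negative $K$-groups of $\IZ G$ vanish, so all decorations agree). Hence the assembly map above is an isomorphism in every degree, which is exactly the input that yields the Borel Conjecture~\ref{con:Borel} for aspherical manifolds with fundamental group $G$; here it forces $\mathcal{S}_{n}(BG)=0$, hence $s(BG)=0$, hence $BG$ is homotopy equivalent to a closed topological manifold $M$ of dimension $n$. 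Since $BG$ is aspherical, so is $M$, and $\pi_{1}(M)\cong G$. This proves the conjecture for $n\ge6$ (and, with more care, for $n=5$).

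The main obstacle is dimension $4$, where neither the $s$-cobordism theorem nor topological surgery is available for these fundamental groups and the statement is genuinely open. In low dimensions one argues directly instead: for $n=1$ two-endedness forces $G\cong\IZ=\pi_{1}(S^{1})$; for $n=2$ the convergence-group theorem of Tukia, Casson--Jungreis and Gabai identifies $G$ with a closed surface group; for $n=3$, Bestvina--Mess again shows that $G$ is a Poincar\'e duality group of dimension $3$, and the conclusion follows in the presence of sufficient geometric input (geometrization), the general three-dimensional case being exactly the content of Conjecture~\ref{con:Poincare_duality_groups} in that dimension. Finally, it should be stressed that the whole argument in the range $n\ge5$ rests on the Farrell--Jones Conjecture for hyperbolic groups, so the genuine difficulty lies there; the remaining surgery-theoretic bookkeeping --- decorations, the passage between the periodic $L$-spectrum of the Farrell--Jones Conjecture and the connective $L$-spectrum governing the total surgery obstruction, and orientation characters --- is comparatively formal.
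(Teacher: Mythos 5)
The statement you are proving is stated in the paper as an open conjecture (due to Gromov); the paper contains no proof of it, only the remark that it has been established for $n \ge 6$ by Bartels--L\"uck--Weinberger using the Farrell--Jones Conjecture for hyperbolic groups \emph{together with} the topology of homology ANR-manifolds (Bryant--Ferry--Mio--Weinberger, Quinn). So at best your argument could reconstruct that partial result, and it is precisely at the step you dismiss as ``comparatively formal'' that the genuine difficulty sits. The total surgery obstruction lives in the structure group of the algebraic surgery exact sequence built from the $1$-connective quadratic $L$-spectrum, whereas the Farrell--Jones Conjecture controls the assembly map for the periodic spectrum $\bfL^{\langle -\infty\rangle}$. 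The difference between the connective and the periodic structure groups is not a matter of decorations (those are indeed handled by the $K$-theoretic vanishing and the Rothenberg sequences); it is measured by $L_0(\IZ)\cong\IZ$, i.e.\ by Quinn's local index. What the periodic isomorphism plus the Bryant--Ferry--Mio--Weinberger theory actually produces is a closed aspherical ANR \emph{homology} manifold homotopy equivalent to $BG$; to promote it to a topological manifold one must show that the Quinn resolution obstruction is trivial, and this is where Bartels--L\"uck--Weinberger use the hypothesis $\partial G\cong S^{n-1}$ a second time (beyond Bestvina--Mess giving Poincar\'e duality), and where the restriction $n\ge 6$ enters, since the homology-manifold machinery requires high dimensions. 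Note that in the rigidity argument of Theorem~\ref{the:The_FJC_implies_the_Borel_Conjecture_in_dimension_ge_5} this issue is invisible because one compares with an already existing manifold, whose local index is $1$; in the existence problem it is the heart of the matter, so your conclusion ``$\cals_n(BG)=0$, hence $s(BG)=0$, hence $BG$ is homotopy equivalent to a closed manifold'' does not follow as stated, and your claim for $n=5$ overshoots what is known.

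Two smaller points. First, you silently replaced the stated conjecture by its torsionfree version; this is a reasonable reading (without it the statement fails already for the infinite dihedral group, whose boundary is $S^0$), but it is a modification of the statement as printed and should be flagged as such. Second, the low-dimensional discussion ($n\le 3$ via two-endedness, convergence groups, geometrization) is fine as commentary but is not part of what is actually known in the critical middle range; the honest summary is: $n\ge 6$ is a theorem (by the route above, with the homology-manifold step supplied), $n=4,5$ are open, and your write-up should not suggest otherwise.
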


This conjecture has been proved for $n \ge 6$ by 
Bartels-L\"uck-Weinberger~\cite{Bartels-Lueck-Weinberger(2009)}
using the proof of the Farrell-Jones Conjecture for hyperbolic 
groups (see~\cite{Bartels-Lueck(2009borelhyp)}) and the topology
of homology ANR-manifolds
(see, for example,~\cite{Bryant-Ferry-Mio-Weinberger(1996),Quinn(1987_resolution)}).

%%%%%%%%%%%%%%%%%%%%%%%%%%%%%%%%%%%%%%%%%%%%%%%%%%%%%%%%%%%%%%%%%%%%%%%%%%%%%%%%%%%%%%%%%

\subsection{Vanishing of the reduced projective class group}
\label{subsec:Vanishing_of_the_reduced_projective_class_group}

Let $R$ be an (associative) ring (with unit).  Define its \emph{projective class
 group} $K_0(R)$ to be the abelian group whose generators are isomorphism
classes $[P]$ of finitely generated projective $R$-modules $P$, and whose
relations are $[P_0] + [P_2] = [P_1]$ for any exact sequence $0 \to P_0 \to P_1
\to P_2 \to 0$ of finitely generated projective $R$-modules. Define the
\emph{reduced projective class group} $\widetilde{K}_0(R)$ to be the quotient of
$K_0(R)$ by the abelian subgroup $\{[R^m] - [R^n] \mid n,m \in \IZ, m,n \ge 0\}$,
which is the same as the abelian subgroup generated by the class $[R]$.

Let $P$ be a finitely generated projective $R$-module. Then its class $[P] \in
\widetilde{K}_0(R)$ is trivial if and only if $P$ is \emph{stably free}, i.e.,
$P \oplus R^r \cong R^s$ for appropriate integers $r,s \ge 0$.  So the reduced
projective class group $\widetilde{K}_0(R)$ measures the deviation of a finitely
generated projective $R$-module from being stably free. Notice that stably free
does not, in general, imply free.

A ring $R$ is called \emph{regular} if it is Noetherian and every $R$-module has
a finite-dimensional projective resolution.  Any principal ideal domain, such as
$\IZ$ or a field, is regular.

\begin{conjecture}[Vanishing of the reduced projective class group]
 \label{con:Vanishing_of_the_reduced_projective_class_group}
 Let $R$ be a regular ring and let $G$ be a torsionfree group.  Then the change
 of rings homomorphism
 \[K_0(R) \to K_0(RG)\] is an isomorphism.

 In particular $\widetilde{K}_0(RG)$ vanishes for every principal ideal domain
 $R$ and every torsionfree group $G$.
\end{conjecture}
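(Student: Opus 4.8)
The plan is to deduce this statement from the $K$-theoretic Farrell-Jones Conjecture for $G$, which predicts that the assembly map
\[
H_n^G\bigl(\edub{G};\bfK_R\bigr) \longrightarrow K_n(RG)
\]
is an isomorphism for all $n \in \IZ$, where $\edub{G} = \EGF{G}{\VCyc}$ is the classifying space for the family $\VCyc$ of virtually cyclic subgroups and $\bfK_R$ is the (non-connective) algebraic $K$-theory spectrum of $R$. Everything below is conditional on that conjecture, which is the real input; the remaining steps are bookkeeping.

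First I would exploit that $G$ is torsionfree. A finite torsionfree group is trivial, so the family $\Fin$ of finite subgroups consists only of the trivial subgroup and $\eub{G}$ may be taken to be $EG$. Moreover an infinite virtually cyclic group maps onto $\IZ$ or onto the infinite dihedral group with finite kernel, so an infinite torsionfree virtually cyclic group is infinite cyclic; hence every virtually cyclic subgroup of $G$ is trivial or infinite cyclic. I would then invoke the Bass-Heller-Swan decomposition $K_n(R[\IZ]) \cong K_n(R) \oplus K_{n-1}(R)$ --- valid because the Nil-terms vanish for the regular ring $R$ --- to conclude that the relative assembly map $H_n^G(\eub{G};\bfK_R) \to H_n^G(\edub{G};\bfK_R)$ is an isomorphism. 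Combined with the Farrell-Jones isomorphism this yields
\[
K_n(RG) \;\cong\; H_n^G(EG;\bfK_R) \;=\; H_n(BG;\bfK_R),
\]
naturally in $G$, so that the homomorphism induced by $G \to \{1\}$ corresponds to the map $K_n(R) = H_n(\pt;\bfK_R) \to H_n(BG;\bfK_R)$ induced by $\pt \to BG$.

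It remains to compute $H_0(BG;\bfK_R)$ and to check that the change-of-rings map is the expected one. I would run the Atiyah-Hirzebruch spectral sequence $E^2_{p,q} = H_p(BG;K_q(R)) \Longrightarrow H_{p+q}(BG;\bfK_R)$. Since $R$ is regular its negative $K$-groups vanish, i.e.\ $K_q(R) = 0$ for $q < 0$, so the only group contributing to total degree $0$ is $E^2_{0,0} = K_0(R)$ and no differential enters or leaves it. Hence $H_0(BG;\bfK_R) \cong K_0(R)$, and the edge homomorphism identifies the composite $K_0(R) \to H_0(BG;\bfK_R) \to K_0(RG)$ with the change-of-rings homomorphism, which is therefore an isomorphism. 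For the last assertion, a principal ideal domain $R$ is regular and satisfies $K_0(R) = \IZ \cdot [R]$, so the isomorphism just obtained forces $K_0(RG) = \IZ \cdot [RG]$, i.e.\ $\widetilde{K}_0(RG) = 0$.

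\textbf{The main obstacle} is of course the Farrell-Jones Conjecture itself, which is open for general torsionfree $G$. Granting it, the only step carrying genuine content (rather than formal manipulation) is the passage from the family $\VCyc$ to the family $\Fin$, and this is precisely the point at which regularity of $R$ --- through the vanishing of the Nil-terms --- is indispensable. One should also take care to verify that the identifications above are natural with respect to $G \to \{1\}$, so that the resulting composite really is the change-of-rings homomorphism rather than merely some abstract isomorphism $K_0(R) \cong K_0(RG)$.
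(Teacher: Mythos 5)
Your argument is correct and is essentially the paper's own treatment: the statement is an open conjecture, and the paper only derives it conditionally from the Farrell-Jones Conjecture~\ref{con:K-theoretic_FJC;torsionfree_regular} in Remark~\ref{rem:FJC_and_vanishing_midel_K-groups}, via exactly your Atiyah-Hirzebruch edge-homomorphism argument (using $K_q(R)=0$ for $q\le -1$ when $R$ is regular) and the identification of the resulting isomorphism with the change-of-rings map. Your additional reduction from the family $\VCyc$ to the trivial family for torsionfree $G$ and regular $R$ is the content of Remark~\ref{rem:The_choice_of_the_family_aVcyc} and matches the paper's point of view.
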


The vanishing of $\widetilde{K}_0(RG)$ contains valuable information about the
finitely generated projective $RG$-modules over $RG$. In the case $R = \IZ$, it
also has the following important geometric interpretation.

 Let $X$ be a connected $CW$-complex. It is called \emph{finite} if it consists of
 finitely many cells, or, equivalently, if $X$ is compact.  It is called
 \emph{finitely dominated} if there is a finite $CW$-complex $Y$, together with
 maps $i\colon X \to Y$ and $r\colon Y \to X$, such that $r \circ i$ is
 homotopic to the identity on $X$. The fundamental group of a finitely
 dominated $CW$-complex is always finitely presented. While studying existence
 problems for spaces with prescribed properties (like group
 actions, for example), it is occasionally relatively easy to construct a
 finitely dominated $CW$-complex within a given homotopy type, whereas it is
 not at all clear whether one can also find a homotopy equivalent \emph{finite}
 $CW$-complex.  \emph{Wall's finiteness obstruction}, a certain obstruction
 element $\widetilde{o}(X) \in \widetilde{K}_0(\IZ \pi_1(X))$, decides this
 question.

 The vanishing of $\widetilde{K}_0 ( \IZ G )$, as predicted in
 Conjecture~\ref{con:Vanishing_of_the_reduced_projective_class_group} for
 torsionfree groups, has the following interpretation: For a finitely presented
 group $G$, the vanishing of $\widetilde{K}_0(\IZ G)$ is equivalent to the
 statement that any connected finitely dominated $CW$-complex $X$ with $G \cong
 \pi_1(X)$ is homotopy equivalent to a finite $CW$-complex.

 For more information about the finiteness obstruction, 
see~\cite{Ferry-Ranicki(2001),Lueck(1987b),Mislin(1995),Wall(1965a)}.

%%%%%%%%%%%%%%%%%%%%%%%%%%%%%%%%%%%%%%%%%%%%%%%%%%%%%%%%%%%%%%%%%%%%%%%%%%%%%%%%%%%%%%%%%

\subsection{Vanishing of the Whitehead group}
\label{subsec:Vanishing_of_the_Whitehead_group}

The \emph{first algebraic $K$-group} $K_1(R)$ of a ring $R$ is defined to be the
abelian group whose generators $[f]$ are conjugacy classes of automorphisms
$f\colon P \to P$ of finitely generated projective $R$-modules $P$ 
and has the following relations. For each exact sequence 
$0 \to (P_0,f_0) \to (P_1,f_1) \to (P_2,f_2) \to 0$ 
of automorphisms of finitely generated projective
$R$-modules, there is the relation $[f_0] - [f_1] + [f_2] = 0$; and for every two
automorphisms $f,g \colon P \to P$ of the same finitely generated projective
$R$-module, there is the relation $[f \circ g] = [f] + [g]$. Equivalently,
$K_1(R)$ is the abelianization of the general linear group 
$GL(R) = \colim_{n  \to \infty} GL_n(R)$.

An invertible matrix $A$ over $R$ represents the trivial element in $K_1(R)$ if
it can be transformed by elementary row and column operations and by
stabilization, $A \to A \oplus 1$ or the inverse, to the empty matrix.

Let $G$ be a group, and let $\{\pm g\mid g \in G\}$ be the subgroup of $K_1(\IZ G)$
given by the classes of $(1,1)$-matrices of the shape $(\pm g)$ for $g \in
G$. The \emph{Whitehead group $\Wh(G)$ of $G$} is the quotient 
$K_1(\IZ G)/\{\pm g\mid g \in G\}$.

\begin{conjecture}[Vanishing of the Whitehead group]
 \label{con:Vanishing_of_the_Whitehead_group}
The Whitehead group of a torsionfree group vanishes.
\end{conjecture}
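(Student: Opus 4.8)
The plan is to deduce this from the Farrell--Jones Conjecture in algebraic $K$-theory with coefficients in $\IZ$, which is by now established for a large class of groups --- hyperbolic groups, $\CAT(0)$-groups, and everything obtained from these by the inheritance properties recalled in the introduction. Strictly speaking, then, what I would prove is the implication: if a torsionfree group $G$ satisfies the $K$-theoretic Farrell--Jones Conjecture, then $\Wh(G)=0$. The Conjecture supplies, for all $n\in\IZ$, an isomorphism
\[ H^G_n\bigl(\EGF{G}{\VCyc};\bfK_{\IZ}\bigr)\;\stackrel{\cong}{\longrightarrow}\;K_n(\IZ G), \]
where $\EGF{G}{\VCyc}$ is the classifying space for the family of virtually cyclic subgroups and $\bfK_{\IZ}$ is the algebraic $K$-theory spectrum functor over the orbit category, whose value on $G/1$ is the (non-connective) $K$-theory spectrum $\bfK(\IZ)$.

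The first task is to shrink the family to the trivial one. Since $G$ is torsionfree, every virtually cyclic subgroup of $G$ is either trivial or infinite cyclic: a finite subgroup is trivial, and an infinite virtually cyclic torsionfree group maps with finite --- hence trivial --- kernel onto $\IZ$ or onto the infinite dihedral group, the latter having $2$-torsion. A Bass--Heller--Swan comparison of the $K$-theoretic assembly maps relative to $\Fin$ and to $\VCyc$ identifies their difference with a sum of Bass--Heller--Swan Nil-terms $NK_*(\IZ)$ indexed by the maximal infinite cyclic subgroups of $G$; since $\IZ$ is regular these vanish, and since $\Fin$ is the trivial family here (with $\eub{G}=EG$), the assembly map
\[ H_n\bigl(BG;\bfK(\IZ)\bigr)\;=\;H^G_n\bigl(EG;\bfK_{\IZ}\bigr)\;\stackrel{\cong}{\longrightarrow}\;K_n(\IZ G) \]
is already an isomorphism. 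Now $\bfK(\IZ)$ is connective with $\pi_0=K_0(\IZ)=\IZ$ and $\pi_1=K_1(\IZ)=\IZ/2$ --- regularity of $\IZ$ also gives $K_{-n}(\IZ)=0$ for $n\ge 1$ --- so using the stable splitting $BG_+\simeq S^0\vee BG$ one reads off in degree one
\[ K_1(\IZ G)\;\cong\;K_1(\IZ)\oplus\widetilde{H}_1\bigl(BG;\bfK(\IZ)\bigr)\;\cong\;\IZ/2\oplus H_1(BG;\IZ)\;\cong\;\IZ/2\oplus G_{\mathrm{ab}}, \]
the would-be contribution $\widetilde{H}_0(BG;K_1(\IZ))$ vanishing because $BG$ is connected. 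Finally one checks that, under this isomorphism, the subgroup $\{\pm g\mid g\in G\}\subseteq K_1(\IZ G)$ fills out all of $\IZ/2\oplus G_{\mathrm{ab}}$: the class of $(-1)$ comes from $K_1(\IZ)$ via $\IZ\hookrightarrow\IZ G$ and generates the $\IZ/2$-summand, while $g\mapsto[(g)]$ realizes the standard homomorphism $G\to G_{\mathrm{ab}}=H_1(BG;\IZ)$ built into the assembly map. Hence $\Wh(G)=K_1(\IZ G)/\{\pm g\mid g\in G\}$ is trivial.

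The main obstacle is entirely the first step: verifying the Farrell--Jones Conjecture for the group at hand. For hyperbolic and $\CAT(0)$-groups this is the hard theorem resting on controlled topology and on flow spaces mimicking the geodesic flow, as outlined in Section~\ref{sec:Methods_of-Proof}; for a general torsionfree group it is open --- it is, after all, the central conjecture surveyed here. Everything downstream is comparatively routine: the reduction from $\VCyc$ to the trivial family is standard once the coefficient ring is regular, and the degree-one computation is elementary. The one point that genuinely needs verification is that the structure maps entering that computation are the evident ones --- $\IZ/2=K_1(\IZ)\to K_1(\IZ G)$ and $G\to(\IZ G)^\times\to K_1(\IZ G)$ --- so that $\{\pm g\mid g\in G\}$ really does surject onto both summands.
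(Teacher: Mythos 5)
This statement is an open conjecture which the paper does not prove; what the paper does offer (Remark~\ref{rem:FJC_and_vanishing_midel_K-groups}) is precisely the implication you establish, namely that the $K$-theoretic Farrell--Jones Conjecture~\ref{con:K-theoretic_FJC;torsionfree_regular} forces $\Wh(G)=0$ for torsionfree $G$, via the degree-one Atiyah--Hirzebruch computation $0 \to \{\pm 1\} \to H_1(BG;\bfK(\IZ)) \to G/[G,G] \to 0$ and the identification of the image of the assembly map with the subgroup of trivial units $\{\pm g \mid g \in G\}$. Your conditional argument is essentially the same; the only difference is that you start from the general $\VCyc$-version and insert the (standard, correct) reduction to the trivial family using regularity of $\IZ$ and torsionfreeness, a step the paper delegates to Remark~\ref{rem:The_choice_of_the_family_aVcyc}, and you rightly acknowledge that the unconditional statement remains open.
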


This conjecture has the following geometric interpretation.

An \emph{$n$-dimensional cobordism} $(W;M_0,M_1)$ consists of a compact oriented
$n$-di\-men\-sio\-nal smooth manifold $W$ together with a disjoint decomposition
$\partial W = M_0 \coprod M_1$ of the boundary $\partial W$ of $W$.  It is
called an \emph{$h$-cobordism} if the inclusions $M_i \to W$ for $i =0,1$ are
homotopy equivalences. An $h$-cobordism $(W;M_0,M_1)$ is trivial if it is
diffeomorphic relative $M_0$ to the trivial $h$-cobordism 
$(M_0 \times [0,1], M_0 \times \{0\}, M_0 \times \{1\})$.  
One can assign to an $h$-cobordism its
\emph{Whitehead torsion} $\tau(W,M_0)$ in $\Wh(\pi_1(M_0))$.

\begin{theorem}[s-Cobordism Theorem] \label{the:s-cobordism_theorem}
\index{Theorem!s-Cobordism Theorem}
Let $M_0$ be a closed connected oriented smooth manifold of dimension $n \ge 5$
with fundamental group $\pi = \pi_1(M_0)$. Then:
\begin{enumerate}

\item \label{the:s-cobordism_theorem:triviality}
An $h$-cobordism $(W;M_0,M_1)$ is trivial if and only if its Whitehead torsion
$\tau(W,M_0) \in \Wh(\pi)$ vanishes;

\item \label{the:s-cobordism_theorem:realization}
For any $x \in \Wh(\pi)$ there is an $h$-cobordism
$(W;M_0,M_1)$ with $\tau(W,M_0) = x \in \Wh(\pi)$.

\end{enumerate}
\end{theorem}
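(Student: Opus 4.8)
The plan is to translate everything into handle theory and then into elementary matrix manipulations over the integral group ring $\IZ\pi$. Recall first how $\tau(W,M_0)$ is defined: one chooses a handle decomposition of $W$ built on $M_0$, which equips the cellular chain complex $C_*$ of the pair of universal coverings $(\widetilde W,\widetilde{M_0})$ with a preferred finite free $\IZ\pi$-basis; since the inclusion $M_0\hookrightarrow W$ is a homotopy equivalence, $C_*$ is acyclic, and $\tau(W,M_0)\in\Wh(\pi)$ is its Whitehead torsion. One must check this is independent of the chosen decomposition, which reduces to the invariance of algebraic torsion under the moves relating handle decompositions. If $(W;M_0,M_1)$ is diffeomorphic rel $M_0$ to $M_0\times[0,1]$, then $C_*$ is the zero complex and $\tau(W,M_0)=0$; this is one direction of~\ref{the:s-cobordism_theorem:triviality}.

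For the substantive direction, assume $\tau(W,M_0)=0$ and fix a handle decomposition of $W$ relative to $M_0$. First I would carry out \emph{handle trading}: using that $\pi_1(M_0)\to\pi_1(W)$ and $\pi_1(M_1)\to\pi_1(W)$ are isomorphisms and that $\dim W=n+1\ge 6$, cancel the handles of index $0$ and $1$ and, dually, those of index $n$ and $n+1$, and then collect the remaining handles into two consecutive indices $k$ and $k+1$ with $2\le k\le n-2$. Now $C_*$ has the form $0\to(\IZ\pi)^r\xrightarrow{A}(\IZ\pi)^r\to 0$; acyclicity forces $A\in GL_r(\IZ\pi)$, and $\tau(W,M_0)=\pm[A]$ in $\Wh(\pi)$. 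By hypothesis $[A]=0$, so $A$ can be reduced to the identity matrix by a finite sequence of elementary row and column operations, multiplications of a row or column by some $\pm g$ with $g\in\pi$, permutations, and stabilizations $A\mapsto A\oplus(1)$.

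The heart of the proof is to realize each of these algebraic moves geometrically, by a modification of the handle decomposition that changes neither $W$ nor its diffeomorphism type rel $M_0$: a stabilization corresponds to introducing a cancelling pair consisting of a $k$-handle and a $(k+1)$-handle, an elementary operation to a handle slide, and a multiplication by $\pm g$ to reorienting a handle or changing its chosen path to the basepoint. After $A$ has been reduced to the identity, the $k$- and $(k+1)$-handles come in pairs whose attaching sphere and belt sphere have algebraic intersection number $\pm1$ in the intermediate $n$-dimensional level manifold; applying the Whitney trick I would isotope each attaching sphere so that it meets the corresponding belt sphere transversally in a single point, and then cancel that pair geometrically. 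Cancelling all pairs leaves $M_0\times[0,1]$. The step I expect to be the main obstacle is exactly this Whitney-trick cancellation: embedding the Whitney $2$-discs requires ambient dimension $\ge 5$ and enough room for the middle-dimensional spheres ($2\le k\le n-2$, with the customary extra care when $k$ or $n-k$ equals $2$), and this is what forces the hypothesis $n\ge 5$.

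For the realization statement~\ref{the:s-cobordism_theorem:realization} I would run the construction in reverse. Given $x\in\Wh(\pi)$, represent it by a matrix $A\in GL_r(\IZ\pi)$ and fix an index $k$ with $3\le k\le n-2$. Attach $r$ trivial $k$-handles to $M_0\times\{1\}$, and then $r$ $(k+1)$-handles along embedded attaching spheres whose intersection pattern with the belt spheres of the $k$-handles is prescribed by $A$; such embeddings exist because the rows of the invertible matrix $A$ are unimodular and $n\ge 5$ leaves room both to embed $k$-spheres and to correct their intersection numbers by the Whitney trick. Let $(W;M_0,M_1)$ be the resulting cobordism. Its relative chain complex is $0\to(\IZ\pi)^r\xrightarrow{A}(\IZ\pi)^r\to 0$, which is acyclic since $A$ is invertible; from this, together with the $\pi_1$-isomorphisms coming from the range of handle indices and a Poincar\'e--Lefschetz duality argument on the $M_1$ side, one checks that $(W;M_0,M_1)$ is an $h$-cobordism. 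Finally, by construction $\tau(W,M_0)=\pm[A]$, and the sign is absorbed by passing to an equivalent matrix, so every class in $\Wh(\pi)$ occurs as a Whitehead torsion.
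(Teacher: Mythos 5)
The paper does not prove this theorem at all: it is quoted as a classical result of Barden, Mazur and Stallings, with the proof delegated to the cited references (Kervaire, Milnor, and Chapter~1 of L\"uck's surgery notes). Your outline is precisely the standard handle-theoretic argument given in those sources --- normal form in two adjacent middle indices, identification of $\tau(W,M_0)$ with the class of the boundary matrix, geometric realization of stabilization, handle slides and base-change moves, Whitney-trick cancellation (which is where $n \ge 5$ enters), and the reverse construction for realization --- so it is correct and essentially the same proof the paper points to.
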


The $s$-Cobordism Theorem~\ref{the:s-cobordism_theorem} is due to Barden, Mazur,
Stallings.  Its topological version was proved by Kirby and Siebenmann
\cite[Essay II]{Kirby-Siebenmann(1977)}.  More information about the
$s$-Cobordism Theorem can be found, for instance, in \cite{Kervaire(1965)},
\cite[Chapter 1]{Lueck(2002c)}, \cite{Milnor(1965b)}. The Poincar\'e Conjecture of dimension $\ge 5$
is a consequence of the s-Cobordism Theorem~\ref{the:s-cobordism_theorem}. The
s-Cobordism Theorem~\ref{the:s-cobordism_theorem} is an important ingredient in
the surgery theory due to Browder, Novikov, Sullivan and Wall, which is the main
tool for the classification of manifolds. 

The s-Cobordism Theorem tells us that the vanishing of the Whitehead group, as
predicted in Conjecture~\ref{con:Vanishing_of_the_Whitehead_group}, has the
following geometric interpretation: For a finitely presented group $G$ the
vanishing of the Whitehead group $\Wh(G)$ is equivalent to the statement that
every h-cobordism $W$ of dimension $ \ge 6$ with fundamental group
$\pi_1(W)\cong G$ is trivial.

%%%%%%%%%%%%%%%%%%%%%%%%%%%%%%%%%%%%%%%%%%%%%%%%%%%%%%%%%%%%%%%%%%%%%%%%%%%%%%%%%%%%%%%%%

\subsection{The Bass Conjecture}
\label{subsec:The-Bass_Conjecture}

For a finite group $G$ there is a well-known fact that the homomorphism
from the complexification of the complex representation ring of $G$ to the
$\IC$-algebra of complex-valued class functions on $G$, given by taking
the character of a finite-dimensional complex representation, is an isomorphism. The Bass
Conjecture aims at a generalization of this fact to arbitrary groups.

Let $\con(G)$ be the set of conjugacy classes $(g)$ of elements $g \in
G$. Denote by $\con(G)_f$ the subset of $\con(G)$ consisting of those conjugacy
classes $(g)$ for which each representative $g$ has finite order. Let
$\class_0(G)$ and $\class_0(G)_f$ respectively be the $\IC$-vector spaces with the set
$\con(G)$ and $\con(G)_f$ respectively as  basis.  This is the same as the $\IC$-vector space
of $\IC$-valued functions on $\con(G)$ and $\con(G)_f$ with finite support.
Define the \emph{universal $\IC$-trace} as
\begin{eqnarray*} \tr_{\IC G}^u \colon \IC G \to \class_0(G), \quad 
\sum_{g \in G} \lambda_g \cdot g \mapsto \sum_{g \in G} \lambda_g \cdot (g).
\end{eqnarray*}
It extends to a function $\tr_{\IC G}^u \colon M_n(\IC G) \to \class_0(G)$ on
$(n,n)$-matrices over $\IC G$ by taking the sum of the traces of the diagonal
entries.  Let $P$ be a finitely generated projective $\IC G$-module. Choose a
matrix $A \in M_n(\IC G)$ such that $A^2 = A$ and the image of the $\IC G$-map
$r_A \colon \IC G^n \to \IC G^n$ given by right multiplication with $A$ is 
$\IC G$-isomorphic to $P$. Define the \emph{Hattori-Stallings rank} of $P$ as
\begin{eqnarray*}
 \HS_{\IC G}(P) & := & \tr_{\IC G}^u(A)    \in \class_0(G).
\end{eqnarray*}
The Hattori-Stallings rank depends only on the isomorphism class of the $\IC
G$-module $P$ and induces a homomorphism $\HS_{\IC G} \colon K_0(\IC G) \to \class_0(G)$.

\begin{conjecture}[(Strong) Bass Conjecture for $K_0 ( \IC G )$]
\label{con:Strong_Bass_Conjecture_for_K_0(CG)}
The Hattori-Stalling rank yields an isomorphism
\[
\HS_{\IC G} \colon K_0(\IC G) \otimes_{\IZ} \IC \to \class_0(G)_f.
\]
\end{conjecture}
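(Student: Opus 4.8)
The plan is to reduce this to the Farrell--Jones Conjecture for algebraic $K$-theory with coefficients in $\IC$; there is no realistic unconditional route to the general statement, so the first move is to establish the implication: \emph{if $G$ satisfies the $K$-theoretic Farrell--Jones Conjecture with $\IC$-coefficients, then $\HS_{\IC G}$ induces an isomorphism $K_0(\IC G)\otimes_{\IZ}\IC \xrightarrow{\cong} \class_0(G)_f$}, and then to invoke the (vast) class of groups for which this hypothesis is known. This single implication handles both halves of the assertion at once: that the image of $\HS_{\IC G}$ lands in $\class_0(G)_f$ at all, and that the induced map is bijective after tensoring with $\IC$.

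So assume the assembly map $H^G_n(E_{\VCyc}G;\bfK_{\IC}) \to K_n(\IC G)$ is an isomorphism for all $n$, where $\bfK_{\IC}$ is the algebraic $K$-theory spectrum functor attached to the ring $\IC$. The first step is to pass from the family $\VCyc$ to the family $\Fin$ of finite subgroups. The relative term of $H^G_*(\eub{G};\bfK_{\IC}) \to H^G_*(E_{\VCyc}G;\bfK_{\IC})$ is assembled from Nil-groups of the group rings $\IC V$ for infinite virtually cyclic $V\le G$, and these vanish because the group ring over $\IC$ of any virtually cyclic group is regular (it is a crossed product of the regular ring $\IC[\IZ]$ with a finite group). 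Hence $K_0(\IC G) \cong H^G_0(\eub{G};\bfK_{\IC})$.

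The second step applies L\"uck's equivariant Chern character to the right-hand side. Rationally it gives a natural isomorphism
\[
H^G_0(\eub{G};\bfK_{\IC})\otimes_{\IZ}\IQ \;\cong\; \bigoplus_{(C)}\;\bigoplus_{p+q=0} H_p(BZ_GC;\IQ)\otimes_{\IQ[N_GC/Z_GC]} \theta_C\bigl(K_q(\IC C)\otimes_{\IZ}\IQ\bigr),
\]
where $(C)$ ranges over the conjugacy classes of finite cyclic subgroups of $G$, the groups $Z_GC\le N_GC$ are the centralizer and normalizer, and $\theta_C$ is the Artin idempotent. Since $\IC C$ is regular, $K_q(\IC C)=0$ for $q<0$, so only the summand with $p=q=0$ survives; and $H_0(BZ_GC;\IQ)=\IQ$ with trivial action. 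For finite cyclic $C$ the character map identifies $K_0(\IC C)\otimes_{\IZ}\IC$ with $\class_0(C)$, and under this identification $\theta_C$ becomes projection onto the functions supported on the generators of $C$. Taking $N_GC/Z_GC$-coinvariants and summing, one obtains $K_0(\IC G)\otimes_{\IZ}\IC \cong \bigoplus_{(C)} \IC[\,\{\text{generators of }C\}/(N_GC/Z_GC)\,]$. Finally, $g\mapsto\langle g\rangle$ sets up a bijection between $\con(G)_f$ and the set of pairs consisting of a conjugacy class $(C)$ of finite cyclic subgroups and a conjugation-orbit of generators of $C$ (the action factoring through $N_GC/Z_GC$ since $Z_GC$ centralizes $C$); this identifies the right-hand side with $\class_0(G)_f$.

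What remains is to verify that the composite isomorphism just produced is precisely the one induced by the Hattori--Stallings rank. This follows from the naturality of $\HS$ under induction along subgroup inclusions, its compatibility with the equivariant Chern character, and the classical fact that for a finite group $F$ the map $\HS_{\IC F}$ is the character isomorphism $K_0(\IC F)\otimes_{\IZ}\IC \cong \class_0(F)$; these three inputs pin the composite down on each summand. The main obstacle is the Farrell--Jones input itself, which is open in general, so that everything above is only conditional. Among the conditional steps the subtlest are the precise vanishing of the Nil-contributions in the $\VCyc\to\Fin$ comparison and the bookkeeping needed to match the abstract assembly/Chern-character isomorphism with the concrete idempotent-matrix definition of $\HS_{\IC G}$; once those are in place the argument is a (lengthy) diagram chase.
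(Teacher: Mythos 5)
You are being asked to prove a statement that the paper itself records only as a conjecture: Conjecture~\ref{con:Strong_Bass_Conjecture_for_K_0(CG)} has no proof in the paper and no unconditional proof is known for general $G$. Your argument is, by your own admission, conditional on the $K$-theoretic Farrell--Jones Conjecture with $\IC$-coefficients, which is itself open (for solvable groups, $SL_n(\IZ)$, mapping class groups, \dots; see Section~\ref{sec:Open_Problems}). Since the hypothesis you assume is strictly stronger than the conclusion and is not available, the proposal does not establish the statement; that is the genuine gap, and no amount of Chern-character bookkeeping or diagram chasing closes it.

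That said, the conditional implication you sketch is essentially the known one, which the paper records in Remark~\ref{rem:FJC_and_Bass} and attributes to \cite[Theorem~0.9]{Bartels-Lueck-Reich(2008appl)} rather than proving: the passage from $\VCyc$ to $\Fin$ is legitimate because $\IC$ is regular and contains $\IQ$ (cf.\ Remark~\ref{rem:The_choice_of_the_family_aVcyc}); your rational Chern-character computation in degree zero reproduces the first isomorphism of Theorem~\ref{the:complex_coefficients}, namely $\IC \otimes_{\IZ} K_0(\IC G) \cong \bigoplus_{(g)\in T} H_0(C_G\langle g\rangle;\IC) \cong \class_0(G)_f$, since $K_q(\IC)=0$ for $q<0$ kills all other summands; and identifying the composite with $\HS_{\IC G}$ does come down to naturality plus the character isomorphism for finite groups. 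So your outline is a reasonable sketch of the implication ``Farrell--Jones for $G$ with $\IC$-coefficients implies the Bass Conjecture for $G$'' --- which yields the Bass Conjecture for the class $\FJ$, e.g.\ hyperbolic groups and their directed colimits --- but it is not, and cannot currently be made into, a proof of the conjecture as stated for all groups.
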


More information and further references 
about the Bass Conjecture can be found
in~\cite[0.5]{Bartels-Lueck-Reich(2008appl)}, \cite{Bass(1976)},\cite[Subsection 9.5.2]{Lueck(2002)}, 
and~\cite[3.1.3]{Lueck-Reich(2005)}.

%%%%%%%%%%%%%%%%%%%%%%%%%%%%%%%%%%%%%%%%%%%%%%%%%%%%%%%%%%%%%%%%%%%%%%%%%%%%%%%%%%%%%%%%%
%%%%%%%%%%%%%%%%%%%%%%%%%%%%%%%%% Section 2 %%%%%%%%%%%%%%%%%%%%%%%%%%%%%%%%%%%%%%%%%%%%%
%%%%%%%%%%%%%%%%%%%%%%%%%%%%%%%%%%%%%%%%%%%%%%%%%%%%%%%%%%%%%%%%%%%%%%%%%%%%%%%%%%%%%%%%%

\typeout{---------------------------------------  Section 2:  --------------------------}

\section{The Farrell-Jones Conjecture for torsionfree \newline groups}
\label{sec:The_Farrell-Jones_Conjecture_for_torsionfree_groups}

%%%%%%%%%%%%%%%%%%%%%%%%%%%%%%%%%%%%%%%%%%%%%%%%%%%%%%%%%%%%%%%%%%%%%%%%%%%%%%%%%%%%%%%%%

\subsection{The $K$-theoretic Farrell-Jones Conjecture for torsionfree 
groups and regular coefficient rings}
\label{subsec:The_K-theoretic_Farrell-Jones_Conjecture_for_torsionfree_group_and_regular_coefficient_rings}

We have already explained $K_0(R)$ and $K_1(R)$ for a ring $R$. There exist
algebraic $K$-groups $K_n(R)$, for every $n \in \IZ$, defined as the
homotopy groups of the associated $K$-theory spectrum $\bfK(R)$. For the
definition of higher algebraic $K$-theory groups and the (connective) $K$-theory
spectrum see, for instance, \cite{Carlsson(2005), Quillen(1973), Rosenberg(1994), Waldhausen(1985)}.  For
information about negative $K$-groups, we refer the reader 
to~\cite{Bass(1968),Farrell-Jones(1995),Pedersen(1984),Pedersen-Weibel(1985),Ranicki(1992a),Rosenberg(1994)}.

How can one come to a conjecture about the structure of the groups $K_n(RG)$?
Let us consider the special situation, where the coefficient ring $R$ is
regular. Then one gets isomorphisms
\begin{eqnarray*}
 K_n(R[\IZ]) & \cong & K_n(R) \oplus K_{n-1}(R);
 \\
 K_n(R[G \ast H]) \oplus K_n(R) & \cong & 
 K_n(RG) \oplus K_n(RH).
\end{eqnarray*}
Now notice that for any generalized homology theory $\calh$, we obtain
isomorphisms
\begin{eqnarray*}
 \calh_n(B\IZ) & \cong & \calh_n(\{\bullet\}) \oplus \calh_{n-1}(\{\bullet\});
 \\
 \calh_n(B(G \ast H)) \oplus \calh_n(\{\bullet\}) & \cong & \calh_n(BG) \oplus \calh(BH).
\end{eqnarray*}
This and other analogies suggest that $K_n(RG)$ may coincide with $\calh_n(BG)$
for an appropriate generalized homology theory.  If this is the case, we
must have $\calh_n(\{\bullet\}) = K_n(R)$. Hence, a natural guess for
$\calh_n$ is $H_n(-;\bfK(R))$, the homology theory associated to the algebraic
$K$-theory spectrum $\bfK(R)$ of $R$.  These considerations lead to:

\begin{conjecture}[$K$-theoretic Farrell-Jones Conjecture for torsionfree groups
 and regular coefficient rings]
 \label{con:K-theoretic_FJC;torsionfree_regular}
 Let $R$ be a regular ring and let $G$ be a torsionfree group. Then there is an
 isomorphism
 \[H_n(BG;\bfK(R)) \xrightarrow{\cong} K_n(RG).
 \]
\end{conjecture}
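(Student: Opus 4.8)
The plan is to deduce Conjecture~\ref{con:K-theoretic_FJC;torsionfree_regular} from the general Farrell-Jones Conjecture, which identifies $K_n(RG)$ with the equivariant homology $H_n^G(\EGF{G}{\VCyc};\bfK_R)$ evaluated on the classifying space for the family of virtually cyclic subgroups. First I would observe that for a \emph{torsionfree} group $G$, every virtually cyclic subgroup is either trivial or infinite cyclic (a torsionfree virtually cyclic group is infinite cyclic by a standard argument: it has a finite-index infinite cyclic subgroup, hence maps to the infinite dihedral group or $\IZ$ with finite kernel, which must be trivial). So the family $\VCyc$ consists only of the trivial group together with infinite cyclic subgroups. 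The strategy is then to run a transitivity/induction argument, reducing first from $\VCyc$ to the family $\Fin$ of finite subgroups, and then — since the group is torsionfree — from $\Fin$ to the trivial family, at which point $H_n^G(EG;\bfK_R) = H_n(BG;\bfK(R))$ by definition of the associated (non-equivariant) homology theory, giving the desired isomorphism.

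The key steps, in order, would be: (1) invoke the general Farrell-Jones isomorphism $H_n^G(\EGF{G}{\VCyc};\bfK_R) \xrightarrow{\cong} K_n(RG)$, which I may assume once the general conjecture is in force; (2) show that the relative assembly map $H_n^G(\EGF{G}{\Fin};\bfK_R) \to H_n^G(\EGF{G}{\VCyc};\bfK_R)$ is an isomorphism — this is where regularity of $R$ enters, via the computation $K_n(R[\IZ]) \cong K_n(R) \oplus K_{n-1}(R)$ (the vanishing of the Bass-Nil and Farrell-Nil terms for regular $R$), applied to the infinite cyclic isotropy groups and assembled using the fact that the relative term is built from such Nil-contributions localized at the infinite cyclic subgroups; (3) for torsionfree $G$, note $\EGF{G}{\Fin} = EG$ since the only finite subgroup is trivial, so $H_n^G(\EGF{G}{\Fin};\bfK_R) = H_n^G(EG;\bfK_R) = H_n(BG;\bfK(R))$; (4) compose to obtain $H_n(BG;\bfK(R)) \xrightarrow{\cong} K_n(RG)$.

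I expect the main obstacle to be step~(2): establishing that passing from $\Fin$ to $\VCyc$ changes nothing. The Bass-Heller-Swan decomposition $K_n(R[\IZ]) \cong K_n(R) \oplus K_{n-1}(R)$ for regular $R$ is the local input, but one must leverage it globally — identifying the cofiber of the relative assembly map with a homology-theoretic sum of Nil-type pieces indexed over conjugacy classes of maximal infinite cyclic subgroups (or their commensurability classes), and showing each such piece vanishes because its building block $NK_*(R)$ (and the twisted/Farrell-Nil variants $NK_*(R_\alpha[t])$ appearing for the semidirect-product structure of infinite virtually cyclic groups) is zero when $R$ is regular. Handling the twisted case carefully — i.e., that regular $R$ forces the Farrell twisted Nil-groups to vanish, not merely the untwisted Bass-Nil groups — is the delicate point; for torsionfree $G$ the relevant virtually cyclic subgroups are all genuinely infinite cyclic (type $\IZ$, not the infinite dihedral type $\IZ \rtimes \IZ/2$ or the amalgam type), so only the untwisted $NK$-theory over $R[\IZ]$ intervenes, and the argument simplifies considerably. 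Alternatively, one can bypass the explicit cofiber analysis and instead verify directly that the source $H_?^?(-;\bfK_R)$ satisfies the axioms needed to conclude the relative assembly map is an equivalence, using that on all infinite cyclic subgroups the relevant spectrum map is already an equivalence by Bass-Heller-Swan — this is the cleaner route and the one I would pursue in the write-up.
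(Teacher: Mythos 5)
The statement you are addressing is a conjecture, and the paper contains no proof of it: Section~2 only \emph{motivates} Conjecture~\ref{con:K-theoretic_FJC;torsionfree_regular} by comparing the Bass--Heller--Swan and free-product formulas for $K_n(RG)$ (with $R$ regular) with the corresponding formulas for a generalized homology theory, and then, in Remark~\ref{rem:The_choice_of_the_family_aVcyc}, asserts with references to the induction-theorem literature that the general Conjecture~\ref{con:Farrell-Jones-Conjecture} reduces to it when $R$ is regular and $G$ is torsionfree. Your proposal is exactly that reduction, carried out rather than cited, and its mathematical content is sound: a torsionfree virtually cyclic group is trivial or infinite cyclic; the transitivity principle for families reduces the relative assembly map from $\Tr$ to $\VCyc$ to the assembly maps of the subgroups $V\in\VCyc$, which for $V=\IZ$ is the Bass--Heller--Swan statement $K_n(R[\IZ])\cong K_n(R)\oplus K_{n-1}(R)$, valid because regularity kills the Nil terms (and, as you correctly note, only untwisted Bass--Nil groups can occur here, since no infinite dihedral-type subgroups exist in a torsionfree group); finally $\EGF{G}{\Tr}=EG$ and $H_n^G(EG;\bfK_R)\cong H_n(BG;\bfK(R))$. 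Two small points for the write-up. First, be explicit that what you prove is the implication ``Conjecture~\ref{con:Farrell-Jones-Conjecture} for $G$ implies Conjecture~\ref{con:K-theoretic_FJC;torsionfree_regular} for $G$''; since the general Farrell--Jones Conjecture is itself open, this is the only sensible content, and it is precisely what Remark~\ref{rem:The_choice_of_the_family_aVcyc} claims, but it should not be phrased as an unconditional proof of the conjecture (which would be inconsistent with Remark~\ref{rem:FJC_special_not_true_in_general}, where both hypotheses are shown to be necessary). Second, your intermediate passage through the family $\Fin$ is redundant: for torsionfree $G$ one has $\Fin=\Tr$, so you can run the transitivity argument in one step from $\Tr$ to $\VCyc$; this also sidesteps the (here irrelevant) subtlety that for general groups the reduction from $\VCyc$ to $\Fin$ requires $\IQ\subseteq R$ in addition to regularity.
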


 \begin{remark}[The Farrell-Jones Conjecture and the vanishing of middle $K$-groups]
  \label{rem:FJC_and_vanishing_midel_K-groups}
   If $R$ is a regular ring, then $K_q(R) = 0$ for $q \le
   -1$. Hence the Atiyah-Hirzebruch spectral sequence converging to
   $H_n(BG;\bfK(R))$ is a first quadrant spectral sequence. Its $E^2$-term is
   $H_p(BG;K_q(R))$.  The edge homomorphism at $(0,0)$ obviously yields an
   isomorphism $H_0(BG;K_0(R)) \xrightarrow{\cong} H_0(BG;\bfK(R))$.  The Farrell-Jones
   Conjecture~\ref{con:K-theoretic_FJC;torsionfree_regular} predicts, because of
   $H_0(BG;K_0(R)) \cong K_0(R)$, that there
   is an isomorphism $K_0(R) \xrightarrow{\cong} K_0(RG)$. We have not
   specified the isomorphism appearing in the Farrell-Jones
   Conjecture~\ref{con:K-theoretic_FJC;torsionfree_regular} above. However, we
   remark that it is easy to check that this isomorphism $K_0(R) \xrightarrow{\cong}
   K_0(RG)$ must be the change of rings map associated to the inclusion $R
   \to RG$.  Thus, we see that the Farrell-Jones
   Conjecture~\ref{con:K-theoretic_FJC;torsionfree_regular} implies
   Conjecture~\ref{con:Vanishing_of_the_reduced_projective_class_group}.

   The Atiyah-Hirzebruch spectral sequence yields an exact sequence 
  $0 \to K_1(R) \to H_1(BG;\bfK(R)) \to H_1(G,K_0(R)) \to 0$.  In the special case $R =
   \IZ$, this reduces to an exact sequence $0 \to \{\pm 1\} \to H_1(BG;\bfK(R))
   \to G/[G,G] \to 0$.  This implies that  the assembly map 
   sends $H_1(BG;\bfK(R))$ bijectively onto the subgroup 
   $\{\pm g\mid g \in G\}$ of $K_1(\IZ G)$. Hence, the Farrell-Jones
   Conjecture~\ref{con:K-theoretic_FJC;torsionfree_regular} implies
   Conjecture~\ref{con:Vanishing_of_the_Whitehead_group}.
 \end{remark}

  \begin{remark}[The Farrell-Jones Conjecture and the Kaplansky Conjecture]
  \label{rem:FJC_and_Kaplanky}
  The Farrell-Jones Conjecture~\ref{con:K-theoretic_FJC;torsionfree_regular}
  also implies the Kaplansky Conjecture~\ref{con:Kaplansky} 
  (see~\cite[Theorem~0.12]{Bartels-Lueck-Reich(2008appl)}).
\end{remark}

 \begin{remark}[The conditions torsionfree and regular are needed in Conjecture~\ref{con:K-theoretic_FJC;torsionfree_regular}]
   \label{rem:FJC_special_not_true_in_general}
   The version of the Farrell-Jones
   Conjecture~\ref{con:K-theoretic_FJC;torsionfree_regular} cannot be true
   without the assumptions that $R$ is regular and $G$ is torsionfree. The
   Bass-Heller-Swan decomposition yields an isomorphism $K_n(R[\IZ]) \cong
   K_n(R) \oplus K_{n-1}(R) \oplus N\!K_n(R) \oplus N\!K_n(R)$, whereas
   $H_n(B\IZ;\bfK(R)) \cong K_n(R) \oplus K_{n-1}(R)$. If $R$ is regular, then
   $N\!K_n(R)$ is trivial, but there are rings $R$ with non-trivial $N\!K_n(R)$.

   Suppose that $R = \IC$ and $G$ is finite. Then $H_0(BG;\bfK_{\IC}) \cong
   K_0(\IC) \cong \IZ$, whereas $K_0(\IC G)$ is the complex representation ring
   of $G$, which is isomorphic to $\IZ$ if and only if $G$ is trivial.
 \end{remark}

%%%%%%%%%%%%%%%%%%%%%%%%%%%%%%%%%%%%%%%%%%%%%%%%%%%%%%%%%%%%%%%%%%%%%%%%%%%%%%%%%%%%%%%%%

\subsection{The $L$-theoretic Farrell-Jones Conjecture for torsionfree groups}
\label{subsec:The_L-theoretic_Farrell-Jones_Conjecture_for_torsionfree_group_and_regular_coefficient_rings}

There is also an $L$-theoretic version of
Conjecture~\ref{con:K-theoretic_FJC;torsionfree_regular}:

\begin{conjecture}[$L$-theoretic Farrell-Jones Conjecture for torsionfree
 groups]
 \label{con:L-theoretic_FJC_torsionfree}
 Let $R$ be a ring with involution and let $G$ be a torsionfree group. 
 Then there is an
 isomorphism
 \[H_n\bigl(BG;\bfL(R)^{\langle - \infty\rangle}\bigr) \xrightarrow{\cong} L_n^{\langle -
   \infty\rangle}(RG).
 \]
\end{conjecture}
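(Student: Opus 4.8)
The plan is to deduce this statement from the general formulation of the Farrell-Jones Conjecture given in Section~\ref{sec:The_general_formulation_of_the_Farrell-Jones_Conjecture}, which asserts that, for the $G$-equivariant homology theory $H^G_*(-)$ built from the $L$-theory spectrum over the orbit category of $G$, the assembly map
\[
H^G_n\bigl(\EGF{G}{\VCyc}\bigr) \xrightarrow{\cong} H^G_n(\pt) = L_n^{\langle-\infty\rangle}(RG)
\]
is an isomorphism. Granting this, two further points suffice. First, since $G$ acts freely on the classifying space $EG = \EGF{G}{\{1\}}$ for the trivial family, the value $H^G_n(EG)$ collapses --- by the very way the equivariant homology theory is manufactured from a spectrum over the orbit category --- to the ordinary homology $H_n\bigl(BG;\bfL(R)^{\langle-\infty\rangle}\bigr)$ of the quotient $BG = G\backslash EG$; and $H^G_n(\pt)$ equals $L_n^{\langle-\infty\rangle}(RG)$ by construction. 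So everything reduces to showing that, for torsionfree $G$, the relative assembly map $H^G_n(EG) \to H^G_n\bigl(\EGF{G}{\VCyc}\bigr)$ is an isomorphism.

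This reduction from the family $\VCyc$ to the trivial family is the heart of the matter, and I would establish it by means of the transitivity principle: the relative assembly map above is an isomorphism as soon as, for every virtually cyclic subgroup $V \le G$, the assembly map $H^V_n(EV) \to L_n^{\langle-\infty\rangle}(RV)$ is an isomorphism. Now if $G$ is torsionfree, then a virtually cyclic subgroup $V \le G$ is either trivial --- in which case there is nothing to prove --- or infinite; and an infinite torsionfree virtually cyclic group is necessarily infinite cyclic. Indeed, such a group sits in an extension with finite kernel and quotient either $\IZ$ or the infinite dihedral group; torsionfreeness forces the kernel to be trivial and rules out the infinite-dihedral quotient, since the torsion of the latter would lift to torsion in $V$. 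Hence it suffices to verify the conjecture for $G = \IZ$.

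For $G = \IZ$ one has $EG = \IR$ and $BG = S^1$, so the source of the assembly map is $H_n\bigl(S^1;\bfL(R)^{\langle-\infty\rangle}\bigr) \cong L_n^{\langle-\infty\rangle}(R) \oplus L_{n-1}^{\langle-\infty\rangle}(R)$, while its target is $L_n^{\langle-\infty\rangle}(R[\IZ])$. The $L$-theoretic Bass-Heller-Swan decomposition --- the Shaneson splitting --- yields precisely $L_n^{\langle-\infty\rangle}(R[\IZ]) \cong L_n^{\langle-\infty\rangle}(R) \oplus L_{n-1}^{\langle-\infty\rangle}(R)$: no Nil- or UNil-type correction enters for the polynomial extension $R[\IZ]$, and the decoration $\langle-\infty\rangle$ is exactly the one for which there is no shift of decoration across the splitting. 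A verification that the assembly map realizes this identification completes the case $G = \IZ$, and with it the whole argument.

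The main obstacle is the general Farrell-Jones Conjecture itself, which is wide open; the argument above shows only that the torsionfree case is a purely formal consequence of it. For concrete classes of torsionfree groups --- hyperbolic groups, $\CAT(0)$-groups, and the groups obtained from them by the inheritance properties recorded in Section~\ref{sec:The_status_of_the_Farrell-Jones_Conjecture} --- the general conjecture is a theorem, and then the statement above becomes a theorem for those groups by exactly this reduction. The one genuinely delicate feature of the reduction is the choice of decoration: the versions with decoration $\langle 1\rangle$ or $\langle 0\rangle$ can already fail for $G = \IZ$, owing to the shift of decoration in the Shaneson splitting together with the possible non-vanishing of the relevant lower $K$-groups, which is precisely why the conjecture is phrased with $\bfL(R)^{\langle-\infty\rangle}$.
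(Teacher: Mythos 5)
The statement you are addressing is a conjecture: the paper gives no proof of it (none is known), and your own closing paragraph correctly concedes that your argument is conditional on the general Farrell-Jones Conjecture~\ref{con:Farrell-Jones-Conjecture}. Judged as what it really is --- a derivation of the torsionfree $L$-theoretic statement from the general one --- your argument is correct, and it is precisely the reduction the paper only alludes to in Remark~\ref{rem:The_choice_of_the_family_aVcyc} (with pointers to the literature, e.g.\ \cite{Lueck-Reich(2005)}, for the transitivity principle and the identification of the assembly map): the induction structure identifies $H^G_n(EG)$ with $H_n\bigl(BG;\bfL(R)^{\langle-\infty\rangle}\bigr)$; in a torsionfree group every virtually cyclic subgroup is trivial or infinite cyclic, since the finite kernel of the surjection onto $\IZ$ or the infinite dihedral group must vanish and the dihedral quotient has torsion; the transitivity principle then reduces the change of family from $\Tr$ to $\VCyc$ to the case $G = \IZ$; and that case is the Shaneson splitting, which for the decoration $\langle-\infty\rangle$ involves no decoration shift and no Nil- or UNil-correction, matching $H_n\bigl(S^1;\bfL(R)^{\langle-\infty\rangle}\bigr)$. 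Your observation about why the decorations $s$, $h$, $p$ must be avoided is consistent with the paper's comment (and the cited counterexamples) following Conjecture~\ref{con:Farrell-Jones-Conjecture}, and you are right that, in contrast to the $K$-theoretic torsionfree statement, no regularity hypothesis on $R$ is needed because the $L$-theoretic Laurent splitting has no Nil terms. The only point to keep clearly in view is that the unconditional content of your write-up is the implication ``general Farrell-Jones Conjecture $\Rightarrow$ Conjecture~\ref{con:L-theoretic_FJC_torsionfree}'', so it yields the statement only for groups for which the general conjecture is actually known, such as those covered by Theorems~\ref{the:FJC_for_hyperbolic_groups} and~\ref{the:FJC_for_CAT(0)_groups} and the inheritance properties of Theorem~\ref{the:inheritance_properties}; as a proof of the conjecture itself it neither is, nor could currently be, complete.
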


Here $\bfL(R)^{\langle - \infty\rangle}$ is the periodic quadratic $L$-theory
spectrum of the ring with involution $R$ with decoration $\langle -
\infty\rangle$, and $L_n^{\langle - \infty\rangle}(R)$ is the $n$-th quadratic
$L$-group with decoration $\langle - \infty\rangle$, which can be identified
with the $n$-th homotopy group of $\bfL_{RG}^{\langle - \infty\rangle}$.  For more
information about the various types of $L$-groups and decorations and $L$-theory
spectra we refer the reader to~\cite{Cappell-Ranicki-Rosenberg(2000),
 Cappell-Ranicki-Rosenberg(2001), Quinn(1970),Ranicki(1981), Ranicki(1992),
 Ranicki(1992a),Ranicki(2002),Wall(1999)}.  Roughly speaking,
$L$-theory deals with quadratic forms. For even $n$, $L_n(R)$ is related to the
Witt group of quadratic forms and for odd $n$, $L_n(R)$ is related to automorphisms of
quadratic forms.  Moreover, the $L$-groups are four-periodic, i.e., $L_n(R)
\cong L_{n+4}(R)$.

\begin{theorem}[The Farrell-Jones Conjecture implies the Borel Conjecture in
 dimensions $\ge 5$]
 \label{the:The_FJC_implies_the_Borel_Conjecture_in_dimension_ge_5}
 Suppose that a torsionfree group $G$ satisfies
 Conjecture~\ref{con:K-theoretic_FJC;torsionfree_regular} and
 Conjecture~\ref{con:L-theoretic_FJC_torsionfree} for $R = \IZ$. Then the Borel
 Conjecture~\ref{con:Borel} holds for any closed aspherical manifold of
 dimension $\ge 5$ whose fundamental group is isomorphic to~$G$.
\end{theorem}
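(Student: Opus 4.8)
The plan is to run the surgery-theoretic proof of topological rigidity, the point being that the two Farrell--Jones Conjectures force the relevant assembly maps to be isomorphisms. Throughout let $M$ be a closed aspherical manifold of dimension $n \ge 5$ with $\pi_1(M) \cong G$.

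First I would reduce the Borel Conjecture~\ref{con:Borel} for $M$ to the statement that the topological structure set $\mathcal{S}^{\topo}(M)$ is trivial, i.e., consists only of the class of $\id_M$: by definition $\mathcal{S}^{\topo}(M)$ is the set of equivalence classes of pairs $(N,f)$ with $N$ a closed manifold and $f\colon N\to M$ a homotopy equivalence, where $(N,f)\sim(N',f')$ if there is a homeomorphism $h$ with $f'\circ h$ homotopic to $f$, so triviality of this set says exactly that every homotopy equivalence $N \to M$ is homotopic to a homeomorphism. Since $n \ge 5$, I would then use the surgery exact sequence of Browder--Novikov--Sullivan--Wall in Ranicki's algebraic form: there is an exact sequence
\[
\cdots \to H_{n+1}(M;\bfL(\IZ)) \xrightarrow{A} L_{n+1}(\IZ G) \to \mathcal{S}^{\topo}(M) \to H_n(M;\bfL(\IZ)) \xrightarrow{A} L_n(\IZ G),
\]
where $\bfL(\IZ)$ is the periodic quadratic $L$-theory spectrum and $A$ is the assembly map; here the normal invariant term $[M,G/\mathrm{TOP}]$ is identified with $H_n(M;\bfL(\IZ))$ via Poincar\'e duality for $M$ together with the Sullivan--Ranicki identification of $G/\mathrm{TOP}$ with the $1$-connective cover of $\bfL(\IZ)$. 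By exactness, $\mathcal{S}^{\topo}(M)$ is trivial as soon as $A$ is surjective in degree $n+1$ and injective in degree $n$.

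Second, I would feed in the conjectures. Since $M$ is aspherical it is a model for $BG$, so $H_k(M;\bfL(\IZ)^{\langle -\infty\rangle})\cong H_k(BG;\bfL(\IZ)^{\langle -\infty\rangle})$, and Conjecture~\ref{con:L-theoretic_FJC_torsionfree} for $R=\IZ$ says the assembly map $H_k(BG;\bfL(\IZ)^{\langle -\infty\rangle})\to L_k^{\langle -\infty\rangle}(\IZ G)$ is an isomorphism for all $k$. To replace the decoration $\langle -\infty\rangle$ by the $s$-decoration that the geometric surgery sequence uses, I would invoke Conjecture~\ref{con:K-theoretic_FJC;torsionfree_regular} for the regular ring $R=\IZ$: as in Remark~\ref{rem:FJC_and_vanishing_midel_K-groups} it gives $\Wh(G)=0$ and $\widetilde{K}_0(\IZ G)=0$, and the same first-quadrant spectral sequence argument (using $K_{-i}(\IZ)=0$ for $i\ge 1$) gives $K_{-i}(\IZ G)=0$ for all $i\ge 1$. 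Through the Rothenberg exact sequences, which compare the decorations of $L$-theory by means of the Tate cohomology of $\IZ/2$ acting via the involution on these $K$-groups, the vanishing of all of $\Wh(G)$, $\widetilde{K}_0(\IZ G)$, $K_{-i}(\IZ G)$ (and of the corresponding groups of $\IZ$ itself) makes all decorations agree, both for $L_*(\IZ G)$ and for $H_*(BG;-)$; hence the $s$-decorated assembly map is an isomorphism in every degree. The vanishing of $\Wh(G)$ also, through the $s$-Cobordism Theorem~\ref{the:s-cobordism_theorem}, collapses the distinction between the $h$- and $s$-versions of the structure set, so the exact sequence above genuinely computes the geometric $\mathcal{S}^{\topo}(M)$.

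Putting these together, $A$ is an isomorphism in all degrees, in particular surjective in degree $n+1$ and injective in degree $n$, so $\mathcal{S}^{\topo}(M)$ is trivial and the Borel Conjecture holds for $M$. I expect the main obstacle to be not a single hard computation but the careful assembly of the first two steps: making precise that triviality of $\mathcal{S}^{\topo}(M)$ is equivalent to the assembly maps being isomorphisms (the geometric-to-algebraic surgery translation, including the passage from the $1$-connective $G/\mathrm{TOP}$ to the periodic $L$-spectrum), and matching the $\langle -\infty\rangle$ decoration of the Farrell--Jones assembly with the decoration surgery produces. It is precisely here that the $K$-theoretic hypothesis, and not only the $L$-theoretic one, is indispensable, and also here that one must take care in the borderline dimension $n=5$.
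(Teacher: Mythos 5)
Your proposal is correct and follows essentially the same route as the paper's sketch: reduce the Borel Conjecture to triviality of $\cals^{\topo}(M)$, identify the surgery obstruction maps in the surgery exact sequence with $L$-theoretic assembly maps, use the $K$-theoretic conjecture (vanishing of $\Wh(G)$, $\widetilde{K}_0(\IZ G)$ and negative $K$-groups) together with Rothenberg sequences to pass between decorations, and conclude by exactness. The only caveat is that the geometric surgery sequence has normal invariant term $H_n\bigl(M;\bfL^s(\IZ)\langle 1\rangle\bigr)$ rather than periodic $L$-homology, so --- as the paper does --- one should deduce injectivity in degree $n$ and surjectivity in degree $n+1$ of the relevant maps from the fact that $H_k\bigl(M;\bfL^s(\IZ)\langle 1\rangle\bigr)\to H_k\bigl(M;\bfL^s(\IZ)\bigr)$ is injective for $k=n$ and bijective for $k>n$, a point you flag as needing care but which should replace writing the sequence itself with the periodic spectrum.
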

\begin{proof}[Sketch of proof]
  The \emph{topological structure set} $\cals^{\topo}(M)$ of a closed manifold
  $M$ is defined to be the set of equivalence classes of homotopy equivalences
  $f\colon M' \to M$, with a topological closed manifold as its source and $M$
  as its target, for which $f_0 \colon M_0 \to M$ and $f_1 \colon M_1 \to M$ are
  equivalent if there is a homeomorphism $g\colon M_0 \to M_1$ such that $f_1
  \circ g$ and $f_0$ are homotopic.  The Borel Conjecture~\ref{con:Borel} can be
  reformulated in the language of surgery theory to the statement that
  $\cals^{\topo}(M)$ consists of a single point if $M$ is an aspherical closed
  topological manifold.

 The \emph{surgery sequence} of a closed topological manifold $M$ of dimension $n \ge
 5$ is the exact sequence
 \begin{multline}
   \ldots \to \caln_{n+1}(M\times [0,1],M \times \{0,1\}) \xrightarrow{\sigma}
   L^s_{n+1}(\IZ\pi_1(M)) \xrightarrow{\partial} \cals^{\topo}(M)
   \\
   \xrightarrow{\eta} \caln_n(M) \xrightarrow{\sigma} L_n^s(\IZ\pi_1(M)),
 \label{surgery_sequence}
 \end{multline}
 which extends infinitely to the left.  It is the fundamental tool for the
 classification of topological manifolds.  (There is also a smooth version of
 it.)  The map $\sigma$ appearing in the sequence sends a normal map of degree
 one to its surgery obstruction.  This map can be identified with the version
 of the $L$-theory assembly map, where one works with the $1$-connected cover
 $\bfL^s ( \IZ ) \langle 1 \rangle$ of $\bfL^s( \IZ )$.  The map $H_k(M;\bfL^s
 (\IZ)\langle 1 \rangle ) \to H_k(M;\bfL^s (\IZ))$ is injective for $k=n$ and
 an isomorphism for $k >n$. Because of the $K$-theoretic assumptions (and the
 so-called Rothenberg sequence), we can replace the $s$-decoration with the
 $\langle - \infty \rangle$-decoration.  Therefore the Farrell-Jones
 Conjecture~\ref{con:L-theoretic_FJC_torsionfree} implies that the map
 $\sigma\colon \caln_n(M) \to L_n^s(\IZ\pi_1(M))$ is injective and the map 
 $\caln_{n+1}(M\times   [0,1],M \times \{0,1\}) 
 \xrightarrow{\sigma} L^s_{n+1}(\IZ\pi_1(M))$ 
 is bijective. Thus, by the surgery sequence, 
 $\cals^{\topo}(M)$ is a point and hence the Borel Conjecture~\ref{con:Borel}
 holds for $M$. More details can be found in \cite[pages
 17,18,28]{Ferry-Ranicki-Rosenberg(1995)}, \cite[Chapter 18]{Ranicki(1992)}.
\end{proof}

For more information about surgery theory, see
\cite{Cappell-Ranicki-Rosenberg(2000),
 Cappell-Ranicki-Rosenberg(2001), 
 Kreck(1999),Lueck(2002c),Ranicki(2002),Wall(1999)}.

%%%%%%%%%%%%%%%%%%%%%%%%%%%%%%%%%%%%%%%%%%%%%%%%%%%%%%%%%%%%%%%%%%%%%%%%%%%%%%%%%%%%%%%%%
%%%%%%%%%%%%%%%%%%%%%%%%%%%%%%%%%%%% Section 3 %%%%%%%%%%%%%%%%%%%%%%%%%%%%%%%%%%%%%%%%%%%%%%
%%%%%%%%%%%%%%%%%%%%%%%%%%%%%%%%%%%%%%%%%%%%%%%%%%%%%%%%%%%%%%%%%%%%%%%%%%%%%%%%%%%%%%%%%

\typeout{---------------------------------------  Section 3:  --------------------------}

\section{The general formulation of the Farrell-Jones Conjecture}
\label{sec:The_general_formulation_of_the_Farrell-Jones_Conjecture}

%%%%%%%%%%%%%%%%%%%%%%%%%%%%%%%%%%%%%%%%%%%%%%%%%%%%%%%%%%%%%%%%%%%%%%%%%%%%%%%%%%%%%%%%%

\subsection{Classifying spaces for families}
\label{subsec:Classiyfing_spaces_for_families}

Let $G$ be a group. A \emph{family $\calf$ of subgroups of $G$} is a set of
subgroups which is closed under conjugation with elements of $G$ and under
taking subgroups.  A $G$-CW-complex, all of whose isotropy groups belong to
$\calf$ and whose $H$-fixed point sets are contractible for all $H \in \calf$,
is called a \emph{classifying space for the family $\calf$} and will be denoted
$\EGF{G}{\calf}$.  Such a space is unique up to $G$-homotopy, because it is
characterized by the property that for any $G$-$CW$-complex $X$, all whose
isotropy groups belong to $\calf$, there is precisely one $G$-map from $X$ to
$\EGF{G}{\calf}$ up to $G$-homotopy. These spaces were introduced by tom Dieck
\cite{Dieck(1972)}. A functorial ``bar-type'' construction is given in
\cite[section 7]{Davis-Lueck(1998)}.

The space $\EGF{G}{\Tr}$, for $\Tr$ the family consisting of the trivial subgroup
only, is the same as the space $EG$, which is by definition the total space of the
universal $G$-principal bundle $G \to EG \to BG$, or, equivalently, the
universal covering of $BG$.  A model for $\EGF{G}{\All}$, for the family $\All$
of all subgroups, is given by the space $G/G =\{\bullet\}$ consisting of one
point.

The space $\EGF{G}{\Fin}$, for $\Fin$ the family of finite subgroups, is also
known as \emph{the classifying space for proper $G$-actions}, and is denoted by
$\eub{G}$ in the literature. Recall that a $G$-$CW$-complex $X$ is proper if and only
if all of its isotropy groups are finite (see for instance \cite[Theorem 1.23 on
page 18]{Lueck(1989)}).

There are often nice models for $\eub G$. If $G$ is word hyperbolic in the sense
of Gromov, then the Rips-complex is a finite model~\cite{Meintrup-Schick(2002)}.  
If $G$ is a discrete subgroup of a Lie group $L$
with finitely many path components, then for any maximal compact subgroup $K
\subseteq L$, the space $L/K$ with its left $G$-action is a model for $\eub{G}$. 
More information about $\eub{G}$ can be
found in
\cite{Baum-Connes-Higson(1994),Echterhof-Lueck-Philipps-Walters(2010),
Lueck(2000a),Lueck(2005s),Lueck-Meintrup(2000)}.

Let $\VCyc$ be the family of virtually cyclic subgroups, i.e., subgroups which
are either finite or contain $\IZ$ as subgroup of finite index.  We often
abbreviate $\edub{G} = \EGF{G}{\VCyc}$.

%%%%%%%%%%%%%%%%%%%%%%%%%%%%%%%%%%%%%%%%%%%%%%%%%%%%%%%%%%%%%%%%%%%%%%%%%%%%%%%%%%%%%%%%%

\subsection{$G$-homology theories}
\label{subsec:G-homology?theories}

Fix a group $G$. A \emph{$G$-homology theory $\calh_*^G$} is a collection of
covariant functors $\calh^G_n$ from the category of $G$-$CW$-pairs to the
category of abelian groups indexed by $n \in \IZ$ together with natural
transformations
$$\partial_n^G(X,A)\colon \calh_n^G(X,A) \to
\calh_{n-1}^G(A):= \calh_{n-1}^G(A,\emptyset)$$ for $n \in \IZ$, such that four
axioms hold; namely, $G$-homotopy invariance, long exact sequence of a pair,
excision, and the disjoint union axiom. The obvious formulation of these axioms
is left to the reader or can be found in~\cite{Lueck(2002b)}.  Of course a
$G$-homology theory for the trivial group $G=\{1 \}$ is a homology theory
(satisfying the disjoint union axiom) in the classical non-equivariant sense.

\begin{remark}[$G$-homology theories and spectra over $\Or(G)$]
\label{rem:G-homology_theories_spectra_over_Or(G)}
The \emph{orbit category} $\Or (G)$ has as objects the homogeneous spaces $G/H$
and as morphisms $G$-maps. Given a covariant functor $\bfE$ from $\Or(G)$ to the
category of spectra, there exists a $G$-homology theory $\calh^G_*$ 
such that $\calh^G_n(G/H) = \pi_n(\bfE(G/H))$ holds for all $n \in \IZ$ and
subgroups $H \subseteq G$ (see~\cite{Davis-Lueck(1998)}, \cite[Proposition~6.3
on page~737]{Lueck-Reich(2005)}). For trivial $G$,  this boils down to the
classical fact that a spectrum defines a homology theory.
\end{remark}

%%%%%%%%%%%%%%%%%%%%%%%%%%%%%%%%%%%%%%%%%%%%%%%%%%%%%%%%%%%%%%%%%%%%%%%%%%%%%%%%%%%%%%%%%

\subsection{The Meta-Isomorphism Conjecture}
\label{subsec:The_Meta-Isomorphism_Conjecture}

Now we can formulate the following Meta-Conjecture for a group $G$,
a family of subgroups $\calf$, and a $G$-homology theory $\calh^G_*$.

\begin{conjecture}[Meta-Conjecture] \label{con:Meta-Conjecture}
The so-called assembly map
\[
A_{\calf} \colon \calh^G_n (\EGF{G}{\calf}) \to \calh^G_n( \pt ),
\]
which is the map induced by the projection $\EGF{G}{\calf} \to \pt$,
is an isomorphism for $n \in \IZ$.
\end{conjecture}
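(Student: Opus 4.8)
The plan is to prove that $A_\calf$ is an isomorphism by splitting the argument into a formal homotopy-theoretic reduction and a geometric input, the latter being where hypotheses on $G$ and $\calf$ necessarily enter. First, by Remark~\ref{rem:G-homology_theories_spectra_over_Or(G)} I would assume that $\calh^G_*$ is the $G$-homology theory attached to a functor $\bfE\colon\Or(G)\to\Spectra$. Then I would filter a model of $\EGF{G}{\calf}$ by its $G$-skeleta; the successive quotients are wedges of cells $G/H_+\wedge S^p$ with $H\in\calf$. Since $\calh^G_*$ satisfies $G$-homotopy invariance, excision, long exact sequences and the disjoint union axiom, the assembly map is assembled from the identity maps $\calh^G_*(G/H)\to\calh^G_*(G/H)$ for $H\in\calf$; consequently $A_\calf$ is an isomorphism for all $n$ if and only if the relative groups $\calh^G_n(\pt,\EGF{G}{\calf})$ vanish, and this vanishing is the real target. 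Already here a choice of family is forced: for $\calf$ the trivial or the finite family the relative group is in general nonzero --- the Bass--Heller--Swan $N\!K$-summands, respectively the representation ring of a finite group, cf.\ Remark~\ref{rem:FJC_special_not_true_in_general} --- so I would take $\calf=\VCyc$, the smallest family for which one can hope to kill the obstruction.

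Second, I would attack the vanishing of $\calh^G_n(\pt,\EGF{G}{\VCyc})$ by a transfer-plus-controlled-topology argument. From the geometry of $G$ --- word-hyperbolicity, or a geometric action on a $\CAT(0)$-space --- I would build a compact $G$-space $X$ (a simplicial model for the boundary $\partial G$, or a suitable compactification of the $\CAT(0)$-space) together with a flow space $FS(X)$ carrying a $G$-action and a flow mimicking the geodesic flow, so that orbits of infinite virtually cyclic subgroups become long and can be pushed towards the part of $FS(X)$ at infinity. Using $X$ and $FS(X)$ I would construct a transfer map $\calh^G_n(\pt)\to\calh^G_n(\EGF{G}{\VCyc})$ that is a right inverse to $A_{\VCyc}$ up to an error term whose propagation in the metric on $FS(X)$ can be made arbitrarily small; interpreting this error in the continuously controlled $K$- or $L$-theory categories over $FS(X)$ in the sense of Pedersen--Weibel, and using that these categories are flasque and hence admit Eilenberg swindles, the error vanishes. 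This yields surjectivity of $A_{\VCyc}$; injectivity follows from the same package applied to the mapping cylinder of $\EGF{G}{\VCyc}\to\pt$. Combining, $\calh^G_n(\pt,\EGF{G}{\VCyc})=0$, which by the first step is exactly the claimed isomorphism. This is the scheme carried out for hyperbolic groups in~\cite{Bartels-Lueck(2009borelhyp)}.

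The hard part will be the flow space. I need a flow on $FS(X)$ with uniform contraction and expansion along flow lines, controlled behaviour near the structure fixed by infinite virtually cyclic subgroups, and enough compactness to run the controlled-algebra swindle; producing such a flow requires $G$ to carry metric features reminiscent of non-positive curvature, which is why the argument reaches hyperbolic and $\CAT(0)$-groups, lattices in Lie groups, and the classes obtained from these via the inheritance properties, but not, for instance, solvable groups. A second, softer obstacle is that the flasqueness input is available for $\bfK(R)$ and for $\bfL(R)^{\langle-\infty\rangle}$ --- the $\langle-\infty\rangle$ decoration is precisely what makes the controlled $L$-groups flasque --- but not for an arbitrary $\Or(G)$-spectrum, so this scheme proves the Meta-Conjecture exactly for $\calf=\VCyc$ paired with the $K$- and $L$-theoretic homology theories; establishing it in greater generality, or exhibiting a counterexample, remains open.
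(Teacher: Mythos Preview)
The statement you are attempting to prove is not a theorem but a \emph{conjecture scheme}; the paper offers no proof of it, and indeed there is nothing to compare your proposal against. The Meta-Conjecture is deliberately stated for an arbitrary group $G$, an arbitrary family $\calf$, and an arbitrary $G$-homology theory $\calh^G_*$, and in that generality it is \emph{false}: the paper itself remarks immediately afterwards that it holds trivially for $\calf=\All$ and that the whole point is to find the smallest $\calf$ for which it still holds. Remark~\ref{rem:FJC_special_not_true_in_general}, which you cite, already exhibits counterexamples for $\calf=\Tr$. So a ``proof'' of the Meta-Conjecture as stated cannot exist.

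What you have actually written is a sketch of the strategy from Section~\ref{sec:Methods_of-Proof} for attacking the Farrell-Jones Conjecture (the specialization $\calf=\VCyc$, $\calh^G_*=H^G_*(-;\bfK_R)$ or $H^G_*(-;\bfL_R^{\langle-\infty\rangle})$) for hyperbolic and $\CAT(0)$-groups. That sketch is broadly in the right spirit --- transfer to $G\times\overline{X}$, flow spaces, long and thin $\VCyc$-coverings, controlled algebra --- and you correctly identify that geometric hypotheses on $G$ are indispensable and that the $\langle-\infty\rangle$ decoration matters. But this is a proof of Theorems~\ref{the:FJC_for_hyperbolic_groups} and~\ref{the:FJC_for_CAT(0)_groups}, not of Conjecture~\ref{con:Meta-Conjecture}. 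Your own final paragraph concedes this: the argument reaches only certain groups and only the $K$- and $L$-theoretic homology theories. That is not a proof of the Meta-Conjecture; it is an acknowledgement that the Meta-Conjecture, taken literally, has no proof.
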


Notice that the Meta-Conjecture~\ref{con:Meta-Conjecture} is always true if we choose
$\calf = \All$. So given $G$ and $\calh^G_*$, the point is to choose
$\calf$ as small as possible.

%%%%%%%%%%%%%%%%%%%%%%%%%%%%%%%%%%%%%%%%%%%%%%%%%%%%%%%%%%%%%%%%%%%%%%%%%%%%%%%%%%%%%%%%%

\subsection{The Farrell-Jones Conjecture}
\label{subsec:The_Farrell-Jones-Conjecture}

Let $R$ be a ring. Then one can construct for every group $G$, using
Remark~\ref{rem:G-homology_theories_spectra_over_Or(G)}, $G$-homology theories
$H^G_*(-;\bfK_R)$ and $H^G_*\bigl(-;\bfL_R^{\langle -\infty
 \rangle}\bigr)$ satisfying $H^G_n(G/H;\bfK_R) \cong K_n(RH)$ and
$H^G_n\bigl(G/H;\bfL^{\langle -\infty \rangle}_R\bigr) 
\cong L^{\langle - \infty\rangle}_n(RH)$.  The Meta-Conjecture~\ref{con:Meta-Conjecture} 
for $\calf = \VCyc$ is the Farrell-Jones Conjecture:

\begin{conjecture}[Farrell-Jones Conjecture]
 \label{con:Farrell-Jones-Conjecture}
 The maps induced by the projection $\edub{G} \to G/G$ are, for every $n \in \IZ$,
 isomorphisms
 \begin{align*}
   H_n^G(\edub{G};\bfK_R) & \to H_n^G(G/G;\bfK_R) = K_n(RG);
   \\
   H_n^G\bigl(\edub{G};\bfL^{\langle -\infty \rangle}_R\bigr) & \to
   H_n^G\bigl(G/G;\bfL^{\langle -\infty \rangle}_R\bigr) = L^{\langle -\infty
     \rangle}_n(RG).
 \end{align*}
\end{conjecture}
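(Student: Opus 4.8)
The plan is to follow the strategy that has been carried through for hyperbolic groups and $\CAT(0)$-groups; it comes in three layers. At the outset one exploits the inheritance properties of the Farrell--Jones Conjecture: it passes to subgroups, to finite direct products, to directed colimits, and to extensions with well-behaved kernel and quotient, and it is best formulated with coefficients in an arbitrary additive $G$-category so that these passages are actually available. Hence it suffices to establish the conjecture for a generating class of groups carrying enough geometry, and all of the difficulty is concentrated there.

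The second layer is an axiomatic criterion living in controlled algebra. One builds a model for $H^G_*(-;\bfK_R)$ out of a category of $RG$-modules parametrized over $G$ and equipped with a control condition ``at infinity'', in which the assembly map $H_n^G(\edub{G};\bfK_R) \to K_n(RG)$ becomes the map that forgets this control. The criterion then asks for a finite-dimensional $G$-$CW$-complex $X$ — the \emph{flow space} — together with, for each $N$, a $G$-invariant open cover of $G \times X$ whose members have stabilizers in $\VCyc$, which is ``$N$-long'' in the $G$-direction, and whose covering dimension is bounded by a constant $d$ independent of $N$. Given such covers, surjectivity of the assembly map follows by using the flow on $X$ to homotope an arbitrary $K$-theory cycle into one that is arbitrarily well controlled, while injectivity follows from an Eilenberg-swindle argument showing that sufficiently controlled cycles bound; the uniform bound $d$ is exactly what makes the argument close up. The identical machinery, with the periodic quadratic $L$-theory spectrum $\bfL^{\langle -\infty\rangle}_R$ in place of $\bfK_R$, yields the $L$-theoretic assertion, the decoration $\langle -\infty\rangle$ being the one for which the formalism is cleanest and which, together with the $K$-theoretic statement and the Rothenberg-type sequences, gives back the other decorated versions. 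One also sees here why the family $\VCyc$ and the space $\edub{G}$ are forced rather than $\Fin$ and $\eub{G}$: the flow lines are copies of $\IR$, so the stabilizers produced by the covers are virtually cyclic.

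The third and decisive layer is the construction of the flow space and of these long thin covers. For a hyperbolic group one takes $X$ to be a suitable completion of the space of generalized geodesics in the Rips complex, carrying the geodesic flow; hyperbolicity supplies expansivity of the flow and, by an argument patterned on the work of Farrell and Jones near the boundary $\partial G$, the required long thin covers with virtually cyclic stabilizers. For a $\CAT(0)$-group $G$ acting properly and cocompactly on a $\CAT(0)$-space one flows along geodesics in that space, with convexity of the metric doing the work that strict negative curvature did before, at the price of a much more delicate estimate for the covering dimension. The main obstacle is precisely this last point: proving that these flow spaces admit covers that are simultaneously arbitrarily long, of uniformly bounded dimension, and correctly $G$-equivariant. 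This is where controlled topology, the point-set geometry of the flow, and the geometric input ``reminiscent of non-positive curvature'' all enter, and it is the reason the conjecture is at present a theorem only for hyperbolic groups, $\CAT(0)$-groups, lattices in virtually connected Lie groups, and the many further groups — including exotic ones such as groups containing expanders — obtained from these by the inheritance properties above.
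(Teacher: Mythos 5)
There is a fundamental mismatch here: the statement you were asked to prove is Conjecture~\ref{con:Farrell-Jones-Conjecture} itself, which is an \emph{open conjecture} for arbitrary groups $G$ and rings $R$; the paper contains no proof of it and cannot, and its Section~\ref{sec:Open_Problems} explicitly lists groups (solvable groups, $SL_n(\IZ)$, mapping class groups, $\Out(F_n)$, Thompson's groups) for which it is unknown. Your three-layer outline is an accurate summary of the paper's Section~\ref{sec:Methods_of-Proof} and of Theorem~\ref{the:inheritance_properties}: assembly as a forget-control map, $\calf$-coverings of uniformly bounded covering dimension giving contracting maps to the nerve, the transfer to $G \times \overline{X}$, flow spaces and long thin covers, and then closure of the class under subgroups, products, colimits and suitable extensions. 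But this machinery is exactly what proves Theorems~\ref{the:FJC_for_hyperbolic_groups} and~\ref{the:FJC_for_CAT(0)_groups}, i.e.\ the conjecture for hyperbolic groups, for $\CAT(0)$-groups, and for groups reachable from these by inheritance --- not the conjecture as stated for every $G$. The decisive gap in your argument is the phrase ``it suffices to establish the conjecture for a generating class of groups carrying enough geometry'': no such generating class is known, and reducing the general statement to one is precisely the open problem, so your proposal is a survey of the known partial results rather than a proof of the statement.

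Two further overstatements are worth flagging. For $\CAT(0)$-groups the $K$-theoretic assembly map is only known to be bijective in degrees $n \le 0$ and surjective in degree $1$ (Theorem~\ref{the:FJC_for_CAT(0)_groups}), so even on that class the full $K$-theoretic conjecture is not established by the strategy you describe. And cocompact lattices in almost connected Lie groups are listed in the paper as an anticipated consequence of the (still open) case of virtually poly-cyclic groups, not as a settled case, so they should not appear in your closing list of groups for which the conjecture ``is at present a theorem.''
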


The version of the Farrell-Jones Conjecture~\ref{con:Farrell-Jones-Conjecture}
is equivalent to the original version due to Farrell-Jones~\cite[1.6 on page
257]{Farrell-Jones(1993a)}. The decoration $\langle - \infty \rangle$ cannot be
replaced by the decorations $h$, $s$ or $p$ in general, since there are
counterexamples for these decorations (see~\cite{Farrell-Jones-Lueck(2002)}).

\begin{remark}[Generalized Induction
 Theorem]\label{rem:Generalized_Induction_Theorem}
 One may interpret the Farrell-Jones Conjecture as a kind of generalized
 induction theorem. A prototype of an induction theorem is Artin's Theorem,
 which essentially says that the complex representation ring of a finite group
 can be computed in terms of the representation rings of the cyclic
 subgroups. In the Farrell-Jones setting one wants to compute $K_n(RG)$ and
 $L_n^{\langle -\infty \rangle}(RG)$ in terms of the values of these functors
 on virtually cyclic subgroups, where one has to take into account all the
 relations coming from inclusions and conjugations, and the values in degree
 $n$ depend on all the values in degree $k \le n$ on virtually cyclic
 subgroups.  
\end{remark}

\begin{remark}[The choice of the family $\VCyc$]
 \label{rem:The_choice_of_the_family_aVcyc}
 One can show that, in general, $\VCyc$ is the smallest family of subgroups for
 which one can hope that the Farrell-Jones Conjecture is true for all $G$ and
 $R$. The family $\Fin$ is definitely too small.  Under certain conditions one
 can use smaller families, for instance, $\Fin$ is sufficient if $R$ is regular
 and contains $\IQ$, and $\Tr$ is sufficient if $R$ is regular and $G$ is
 torsionfree.  This explains that Conjecture~\ref{con:Farrell-Jones-Conjecture}
 reduces to Conjecture~\ref{con:K-theoretic_FJC;torsionfree_regular} and
 Conjecture~\ref{con:L-theoretic_FJC_torsionfree}.  More information about
 reducing the family of subgroups can be found in
 \cite{Bartels-Lueck(2007ind)}, \cite{Davis-Khan-Ranicki(2008)}, 
 \cite{Davis-Quinn-Reich(2010)}, \cite[Lemma~4.2]{Lueck(2005heis)},
 \cite[2.2]{Lueck-Reich(2005)}, \cite{Quinn(2005)}. 
 \end{remark}

 Remarks~\ref{rem:Generalized_Induction_Theorem} 
 and~\ref{rem:The_choice_of_the_family_aVcyc} can be illustrated by the 
 following consequence
 of the Farrell-Jones Conjecture~\ref{con:Farrell-Jones-Conjecture}: Given
 a field $F$ of characteristic zero and a group $G$, the obvious map
 $$
 \bigoplus_{H \subseteq G, |H| < \infty} K_0(FH) \to K_0(FG)
 $$
 coming from the various inclusions $H \subseteq G$ is surjective, and actually
 induces an isomorphism
 $$
 \colim_{H \subseteq G, |H| < \infty} \; K_0(FH) \xrightarrow{\cong} K_0(FG).
 $$

 \begin{remark}[The $K$-theoretic Farrell-Jones Conjecture and the Bass
   Conjecture]
   \label{rem:FJC_and_Bass}
   The $K$-theoretic Farrell-Jones
   Conjecture~\ref{con:Farrell-Jones-Conjecture} implies the Bass
   Conjecture~\ref{con:Farrell-Jones-Conjecture}
   (see~\cite[Theorem~0.9]{Bartels-Lueck-Reich(2008appl)}).
 \end{remark}

 \begin{remark}[Coefficients in additive categories]
   \label{rem:coefficients_in_additive_categories}
   It is sometimes important to consider twisted group rings, where we take a
   $G$-action on $R$ into account, or more generally, crossed product rings $R
   \ast G$. In the $L$-theory case we also want to allow orientation
   characters. All of these generalizations can be uniformly handled if one
   allows coefficients in an additive category.  These more general versions of
   the Farrell-Jones Conjectures are explained for $K$-theory
   in~\cite{Bartels-Reich(2007coeff)} and for $L$-theory
   in~\cite{Bartels-Lueck(2009coeff)}.  These generalizations also encompass the
   so-called fibered versions. One of their main features is that they have much
   better inheritance properties, (e.g., passing to subgroups, direct and free
   products, directed colimits) than the untwisted
   version~\ref{con:Farrell-Jones-Conjecture}.

   For proofs the coefficients are often dummy variables. In the right setup it
   does not matter whether one uses coefficients in a ring $R$ or in an additive
   category.
 \end{remark}

%%%%%%%%%%%%%%%%%%%%%%%%%%%%%%%%%%%%%%%%%%%%%%%%%%%%%%%%%%%%%%%%%%%%%%%%%%%%%%%%%%%%%%%%%

\subsection{The Baum-Connes and the Bost Conjectures}
\label{subsec:The_Baum-Connes-Conjecture_and_the_Bost_Conjecture}

There also exists a $G$-homology theory
$H^G_*\bigl(-;\bfK^{\topo}_{C^*_r}\bigr)$ with the property that
$H^G_n\bigl(G/H;\bfK^{\topo}_{C^*_r}\bigr) = K_n(C^*_r(H))$, where
$K_n(C^*_r(H))$ is the topological $K$-theory of the reduced group
$C^*$-algebra. For a proper $G$-$CW$-complex $X$, the equivariant topological
$K$-theory $K_n^G(X)$ agrees with $H^G_n\bigl(X;\bfK^{\topo}_{C^*_r}\bigr)$. The
Meta-Conjecture~\ref{con:Meta-Conjecture} for $\calf = \Fin$ is:

\begin{conjecture}[Baum-Connes-Conjecture]
 \label{con:Baum-Connes-Conjecture}
 The maps induced by the projection $\eub{G} \to G/G$
 \[
 K_n^G(\eub{G}) = H_n^G\bigl(\eub{G};\bfK_{C^*_r}^{\topo}\bigr) \to
 H_n^G\bigl(G/G;\bfK_{C^*_r}^{\topo}\bigr) = K_n(C^*_r(G)).
 \]
 are isomorphisms for every $n \in \IZ$.
\end{conjecture}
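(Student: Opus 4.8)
The Baum--Connes Conjecture~\ref{con:Baum-Connes-Conjecture} is, at present, a theorem only for certain classes of groups, so what follows is a strategy of attack rather than a complete argument. The starting point is to recast the assembly map $K^G_n(\eub{G}) \to K_n(C^*_r(G))$ inside Kasparov's equivariant $KK$-theory: writing $\eub{G}$ as an increasing union of cocompact $G$-$CW$-subcomplexes $X$, the domain is computed by the groups $KK^G_n(C_0(X),\IC)$, the target is $KK^G_n(\IC,\IC)$ paired with $C^*_r(G)$ via descent, and the assembly map becomes the descent homomorphism followed by the index map. The plan is then to run the \emph{Dirac--dual-Dirac method}: produce for a suitable proper $G$-$C^*$-algebra $A$ a ``Dirac'' class $D \in KK^G_0(A,\IC)$ and a ``dual-Dirac'' class $\eta \in KK^G_0(\IC,A)$ whose Kasparov product $\gamma = \eta \otimes_A D \in KK^G_0(\IC,\IC)$ is a one-sided inverse to assembly on the nose; since such a $\gamma$ always yields split injectivity, the whole content is concentrated in proving $\gamma = 1$ in $KK^G_0(\IC,\IC)$, or at least that the image of $\gamma$ under descent acts as the identity on $K_*(C^*_r(G))$.

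First I would settle the cases in which this can genuinely be carried out. If $G$ has the Haagerup property, i.e.\ admits a metrically proper affine isometric action on a real Hilbert space, one constructs $D$ and $\eta$ from the infinite-dimensional Bott element and its dual on the ``$C_0$-algebra of the Hilbert space'' via the Higson--Kasparov--Trout machinery, and a canonical homotopy contracts $\gamma$ to $1$; this alone covers amenable groups, free groups, Coxeter groups and much more. If instead $G$ acts properly, cocompactly and isometrically on a $\CAT(0)$-space, a bolic space, or an affine building, one replaces Hilbert-space geometry by the geometry of that space, using geodesic contraction toward a basepoint to build the homotopy --- though here one is typically forced into Lafforgue's Banach $KK$-theory $KK^{\mathrm{ban}}$ merely to define the dual-Dirac class, and must then transport the resulting statement back to the $C^*$-level. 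In every instance the non-formal ingredient is geometric: a proper, contractible, ``flowable'' metric structure carrying the $G$-action.

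The main obstacle is property (T), and more broadly the breakdown of the conjecture ``with coefficients''. For a property (T) group the natural dual-Dirac construction produces a $\gamma$ that is \emph{not} the identity --- the Kazhdan projection in $C^*_{\max}(G)$ is detected by a nontrivial idempotent --- so the Dirac--dual-Dirac method cannot close as stated, and one would need either a fundamentally different homotopy inverse to assembly or an argument tailored to the reduced completion that exploits the gap between $C^*_{\max}(G)$ and $C^*_r(G)$. Any correct general proof must in fact be non-functorial in the coefficients, since the conjecture with coefficients is false for Gromov monster groups (Higson--Lafforgue--Skandalis), in sharp contrast to the Farrell--Jones Conjecture, whose strong inheritance properties (subgroups, free and direct products, directed colimits) make colimit arguments available. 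Concretely I would prioritize: (1) pushing the flow/contraction technique beyond $\CAT(0)$ and hyperbolic geometry while staying inside $C^*$-$KK$-theory; (2) analyzing a directed colimit $G = \colim_i G_i$ --- split injectivity is inherited, so only surjectivity is at issue, and the obstruction is that $C^*_r$, unlike $C^*_{\max}$, is not continuous along such colimits; and (3) finding what extra structure on a property (T) group --- a proper action on a well-behaved Banach or $L^p$-space, say --- could still support a dual-Dirac class. I expect step (3) to be the real difficulty: no geometric or analytic mechanism is presently known that produces a homotopy $\gamma \simeq 1$ in the presence of property (T), and this is precisely why the conjecture remains open.
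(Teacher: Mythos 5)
This item is a conjecture, not a theorem: the paper offers no proof of the Baum--Connes Conjecture~\ref{con:Baum-Connes-Conjecture}, it only states it (citing Baum--Connes--Higson for the original formulation) and discusses its status, so there is no argument of the paper's to compare yours against. Your write-up correctly recognizes this and presents a research strategy rather than a proof, and that strategy is consistent with what the paper itself says in its open-problems section: the Dirac--dual-Dirac method as the main known tool, the failure of the conjecture with coefficients for groups with expanders (Higson--Lafforgue--Skandalis), and the contrast with the strong inheritance properties enjoyed by the Farrell--Jones Conjecture. Judged as a survey of the state of the art, it is essentially accurate and appropriately hedged about property~(T).

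One correction of emphasis: you list groups acting properly, cocompactly and isometrically on a $\CAT(0)$-space among the cases where the method ``can genuinely be carried out,'' but the paper explicitly records that the Baum--Connes Conjecture is unknown for all $\CAT(0)$-groups. What is actually known in that direction (Kasparov--Skandalis, Lafforgue) requires additional hypotheses --- strong bolicity together with property (RD) to pass from the Banach or maximal level to $C^*_r(G)$ --- and these are not available for general $\CAT(0)$-groups; so that case belongs with your list of obstacles, not with the settled cases such as Haagerup (a-T-menable) groups. With that adjustment your proposal is a fair account of why the conjecture remains open, but it should not be mistaken for a proof of the statement, which no one currently possesses.
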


The original version of the Baum-Connes Conjecture is stated in~\cite[Conjecture
3.15 on page 254]{Baum-Connes-Higson(1994)}.  For more information about the
Baum-Connes Conjecture, see, for instance,~\cite{Higson(1998a),Lueck-Reich(2005),Mislin-Valette(2003)}.

\begin{remark}[The relation between the conjectures of Novikov, Farrell-Jones
 and Baum-Connes]
 \label{rem:relations}
 Both the $L$-theoretic Farrell-Jones
 Conjecture~\ref{con:Farrell-Jones-Conjecture} and the Baum-Connes
 Conjecture~\ref{con:Baum-Connes-Conjecture} imply the Novikov Conjecture.
 See~\cite[Section~23]{Kreck-Lueck(2005)}, where the relation between
 the $L$-theoretic Farrell-Jones Conjecture~\ref{con:Farrell-Jones-Conjecture} and the
 Baum-Connes Conjecture~\ref{con:Baum-Connes-Conjecture} is also explained.
\end{remark}

%%%%%%%%%%%%%%%%%%%%%%%%%%%%%%%%%%%%%%%%%%%%%%%%%%%%%%%%%%%%%%%%%%%%%%%%%%%%%%%%%%%%%%%%%
%%%%%%%%%%%%%%%%%%%%%%%%%%%%%%%%%%%% Section 4 %%%%%%%%%%%%%%%%%%%%%%%%%%%%%%%%%%%%%%%%%%
%%%%%%%%%%%%%%%%%%%%%%%%%%%%%%%%%%%%%%%%%%%%%%%%%%%%%%%%%%%%%%%%%%%%%%%%%%%%%%%%%%%%%%%%%

\typeout{---------------------------------------  Section 4:  --------------------------}

\section{The status of the Farrell-Jones Conjecture}
\label{sec:The_status_of_the_Farrell-Jones_Conjecture}

%%%%%%%%%%%%%%%%%%%%%%%%%%%%%%%%%%%%%%%%%%%%%%%%%%%%%%%%%%%%%%%%%%%%%%%%%%%%%%%%%%%%%%%%%

\subsection{The work of Farrell-Jones and the status in 2004}

One of the highlights of the work of Farrell and Jones is their proof of the
Borel Conjecture~\ref{con:Borel} for manifolds of dimension $\geq 5$ which
support a Riemannian metric of non-positive sectional
curvature~\cite{Farrell-Jones(1993c)}.  They were able to extend this result to
cover compact complete affine flat manifolds of dimension $\geq
5$~\cite{Farrell-Jones(1998)}.  This was done by considering complete
non-positively curved manifolds that are not necessarily compact. Further
results by Farrell and Jones about their conjecture for $K$-theory and
pseudo-isotopy can be found in~\cite{Farrell-Jones(1993a)}. For a detailed
report about the status of the Baum-Connes Conjecture and Farrell-Jones
Conjecture in 2004 we refer to~\cite[Chapter~5]{Lueck-Reich(2005)}, where one
can also find further references to relevant papers.

%%%%%%%%%%%%%%%%%%%%%%%%%%%%%%%%%%%%%%%%%%%%%%%%%%%%%%%%%%%%%%%%%%%%%%%%%%%%%%%%%%%%%%%%%

\subsection{Hyperbolic groups and $\CAT(0)$-groups}

In recent years, the class of groups for which the
Farrell-Jones Conjecture, and hence the other conjectures appearing in
Section~\ref{sec:Some_well-known_conjectures}, are true has been extended considerably beyond
fundamental groups of non-positively curved manifolds. In what follows, a
\emph{hyperbolic group} is to be understood in the sense of Gromov. A
$\CAT(0)$-group is a group that admits a proper isometric cocompact action on
some $\CAT(0)$-space of finite topological dimension. 

\begin{theorem}[Hyperbolic groups]\label{the:FJC_for_hyperbolic_groups}
 The Farrell-Jones Conjecture with coefficients in additive categories (see
 Remark~\ref{rem:coefficients_in_additive_categories}) holds for both $K$- and
 $L$-theory for every hyperbolic group.
\end{theorem}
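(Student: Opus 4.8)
The plan is to prove the Farrell-Jones Conjecture for hyperbolic groups by establishing a stronger, technically more robust statement — the \emph{fibered} Farrell-Jones Conjecture with coefficients in additive categories — and then exploiting its formal inheritance properties together with a geometric input coming from the action of a hyperbolic group on its Rips complex and on (a thickened version of) its boundary. The first reduction is to pass from the family $\VCyc$ to the family $\Fin$: by a transfer/Bass-Heller-Swan style argument (the so-called passage from $\Fin$ to $\VCyc$, cf.\ the references in Remark~\ref{rem:The_choice_of_the_family_aVcyc}), it suffices to prove that the assembly map $H_n^G(\EGF{G}{\Fin};\bfE) \to H_n^G(G/G;\bfE)$ is split injective and surjective for $\bfE = \bfK_R$ and $\bfE = \bfL_R^{\langle -\infty\rangle}$, uniformly over all additive coefficient categories. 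Injectivity and surjectivity are handled by slightly different machines, so I would treat them separately.

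For \textbf{surjectivity} (and for the harder half, isomorphism onto the image), the key is a controlled-topology input. A hyperbolic group $G$ acts properly cocompactly on its Rips complex, which is a finite model for $\eub{G}$, and — crucially — one can build a \emph{flow space} $FS$ that plays the role of the geodesic flow on a negatively curved manifold: it comes from the space of (generalized) geodesics in a suitable thickening of $G \cup \partial G$. The decisive geometric fact about a hyperbolic group is that this flow is ``contracting transverse to the flow direction'' after long time, and long, thin flow boxes can be covered by open sets whose $G$-translates have uniformly bounded multiplicity and whose isotropy lies in $\VCyc$ (in fact one needs the family of subgroups that stabilize a flow line, which is virtually cyclic). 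This produces, for every $\epsilon$, a $G$-equivariant map to a simplicial complex of bounded dimension with $\VCyc$-isotropy that moves points a controlled amount — i.e.\ long, thin covers of $G \times [1,\infty)$ in the sense of the ``Farrell-Jones criterion''. Feeding such covers into the obstruction-category / controlled-$K$-and-$L$-theory machinery (the $\epsilon$-controlled $h$-cobordism theorem, resp.\ its $L$-theoretic analogue) kills the relevant controlled groups and yields surjectivity.

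For \textbf{injectivity}, the cleanest route is to embed $G$ into a contractible finite-dimensional space on which $G$ acts amenably ``at infinity'' — here one uses that hyperbolic groups act amenably on their boundary $\partial G$ — and then run a descent / assembly-map argument: the composite of assembly with a suitable transfer is an isomorphism because, after smashing with the $G$-CW-structure coming from a cover of $EG \times \partial G$ of bounded multiplicity, the relevant $G$-homology theory is induced from finite (hence $\VCyc$) subgroups. This is the standard ``the assembly map is split injective if $G$ admits a finite-dimensional model for $\eub{G}$ and acts amenably on a compact space'' mechanism, applied with the flow-space/boundary data just described.

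\textbf{The main obstacle} is the construction and analysis of the flow space and the verification that a hyperbolic group admits the required long, thin, $\VCyc$-covers of $G\times[1,\infty)$ — equivalently, that the flow on $FS$ has the needed hyperbolicity (contraction transverse to the flow) together with the right compactness and properness so that the covers have both bounded dimension and the correct (virtually cyclic) stabilizers. Gromov-hyperbolicity gives exponential divergence of geodesics, but one must handle non-uniqueness of geodesics, the passage to the compactified space $\overline{G} = G \cup \partial G$, periodic flow lines, and do all the estimates $G$-equivariantly and uniformly; assembling these into an actual cover with simultaneous control on dimension, size, and isotropy is the technical heart of \cite{Bartels-Lueck(2009borelhyp)}. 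Once that geometric input is in place, the reduction to the controlled algebra and the inheritance from additive coefficients to twisted group rings is comparatively formal.
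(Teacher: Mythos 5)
Your overall outline --- assembly as a forget-control map, transfer from $G$ to $G\times\overline{X}$, a flow space playing the role of the geodesic flow, and long thin equivariant covers of bounded dimension with virtually cyclic isotropy --- is indeed the strategy of \cite{Bartels-Lueck-Reich(2008hyper)} and \cite{Bartels-Lueck(2009borelhyp)}, which is all the paper itself invokes in its (purely citational) proof. However, your very first reduction is a step that fails: it does \emph{not} suffice to prove that the $\Fin$-assembly map is split injective and surjective. For $K$-theory with arbitrary rings or additive categories as coefficients, the passage from $\Fin$ to $\VCyc$ is obstructed by Nil-phenomena: the relative assembly map $H_n^G(\eub{G};\bfK_R)\to H_n^G(\edub{G};\bfK_R)$ is not surjective in general. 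Concretely, for a torsionfree hyperbolic group Theorem~\ref{the:torsionfree_hyperbolic_groups} gives $K_n(RG)\cong H_n(BG;\bfK(R))\oplus\bigoplus_{V\in\calm}\bigl(N\!K_n(R)\oplus N\!K_n(R)\bigr)$, so for non-regular $R$ the $\Fin$-assembly map (here the $\Tr$-assembly map) is not surjective, and proving it bijective would mean proving something false; in $L$-theory, UNil-terms of infinite dihedral type obstruct the passage integrally as well. This is exactly why Remark~\ref{rem:The_choice_of_the_family_aVcyc} states that the family $\Fin$ is ``definitely too small''. The actual proofs work with the family $\VCyc$ from the start: the long thin covers obtained from the flow space have virtually cyclic (not finite) isotropy, and the controlled-algebra machinery is run relative to $\VCyc$ throughout.

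Your injectivity mechanism is also not the one used and would not by itself deliver the theorem: boundary amenability plus descent yields split-injectivity statements of Novikov type, not bijectivity of the $\VCyc$-assembly map with arbitrary additive coefficients. In \cite{Bartels-Lueck-Reich(2008hyper)} and \cite{Bartels-Lueck(2009borelhyp)} injectivity comes from the same controlled machinery as surjectivity, by solving a surjectivity problem in a relative situation, as the paper's section on methods of proof indicates; no separate amenability/descent argument is needed. Apart from these two points, your description of the geometric core (the flow space of generalized geodesics over $\overline{X}=G\cup\partial G$, contraction transverse to the flow direction, equivariant covers of uniformly bounded dimension with $\VCyc$-isotropy, cf.\ \cite{Bartels-Lueck-Reich(2008cover)}) matches the cited proofs, and you correctly identify the construction and estimates for these covers as the technical heart of the matter.
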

\begin{proof}
 The $K$-theory part is proved in
 Bartels-L\"uck-Reich~\cite{Bartels-Lueck-Reich(2008hyper)}, the $L$-theory
 part in Bartels-L\"uck~\cite{Bartels-Lueck(2009borelhyp)}.
\end{proof}

\begin{theorem}[$\CAT(0)$-groups]\label{the:FJC_for_CAT(0)_groups}\
 \begin{enumerate}

 \item
   \label{the:FJC_for_CAT(0)_groups:L}
   The $L$-theoretic Farrell-Jones Conjecture with coefficients in additive
   categories (see Remark~\ref{rem:coefficients_in_additive_categories}) holds
   for every $\CAT(0)$-group;

 \item
   \label{the:FJC_for_CAT(0)_groups:K}
   The assembly map for the $K$-theoretic Farrell-Jones Conjecture with
   coefficients in additive categories (see
   Remark~\ref{rem:coefficients_in_additive_categories}) is bijective in
   degrees $n \le 0$ and surjective in degree $n = 1$ for every
   $\CAT(0)$-group.
 \end{enumerate}
\end{theorem}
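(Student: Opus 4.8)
The plan is to deduce the theorem from the general machinery of controlled algebra together with a geometric input tailored to $\CAT(0)$-spaces. First I would invoke the reduction that already underlies Theorem~\ref{the:FJC_for_hyperbolic_groups}: the Farrell-Jones Conjecture with coefficients in additive categories follows from a checkable geometric condition on $G$, roughly that $G$ admits, for arbitrarily large $\beta$, equivariant open covers of a suitable $G$-space which have isotropy groups in $\VCyc$, which have covering dimension bounded by a constant $N = N(G)$ independent of $\beta$, and which are ``long in a flow direction''. For $K$-theory this is the notion of being transfer reducible over $\VCyc$, and for $L$-theory one uses its (slightly weaker) $L$-theoretic variant; in both cases the passage from the geometric condition to the isomorphism statement is carried out with the obstruction-category and squeezing techniques of controlled topology and does not use the $\CAT(0)$ hypothesis. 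So the real work is to produce such covers for an arbitrary $\CAT(0)$-group $G$, i.e. a group acting properly, isometrically and cocompactly on a $\CAT(0)$-space $X$ of finite topological dimension.

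Second, I would build the flow space $FS = FS(X)$ playing the role of the geodesic flow: its points are the generalized geodesics $c\colon \IR \to X$ (maps which are isometric on a closed, possibly unbounded interval and locally constant outside it), with $\IR$ acting by reparametrisation $c \mapsto c(\cdot + t)$ and $G$ acting by post-composition. Using convexity of the metric on a $\CAT(0)$-space one checks that $FS$ is a proper, finite-dimensional metric space, with $\dim FS$ bounded in terms of $\dim X$, on which $G$ acts properly, and that the flow is \emph{contracting} towards geodesics staying in a bounded region. This is the $\CAT(0)$ analogue of the fact that the geodesic flow on a nonpositively curved manifold has no expanding directions transverse to long geodesics.

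The heart of the proof, and the step I expect to be the main obstacle, is the \emph{Long Thin Cover Theorem}: for every $\beta > 0$ there is a $G$-invariant open cover of $FS$ by sets whose stabilisers lie in $\VCyc$, of order $\le N = N(\dim X)$, such that every flow line of length $\beta$ is contained in a single member of the cover. One first treats the ``singular'' part of $FS$ consisting of geodesics with finite $G$-isotropy (small displacement, small Gromov products), covering it by finite-isotropy sets via properness of the action and a compactness argument; then, along a long geodesic the stabiliser is virtually cyclic, essentially generated by a near-translation along an axis, and one manufactures $\VCyc$-sets by flowing the finite-isotropy pieces and pushing the overlaps under control, the key estimates being exactly the $\CAT(0)$ convexity and contraction bounds from the previous step. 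The dimension bound is obtained by induction over a length scale (or over skeleta of an equivariant triangulation), iterating covers of quotient spaces; keeping $N$ independent of $\beta$ is the delicate point and is where the geometry of $X$ is really used.

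Finally, with the flow space and the long thin covers in hand, I would verify the transfer-reducibility condition, for $K$-theory, and its $L$-theoretic counterpart, and conclude via the controlled-algebra reduction of the first step. This yields the $L$-theoretic Farrell-Jones Conjecture in all degrees. For $K$-theory the same input only drives the controlled-algebra machine through a limited range: the $K$-theoretic criterion demands a somewhat stronger form of the geometric data, namely a sufficiently coherent homotopy $G$-action on a compactification of $X$, equivalently sharper control on the covers, and the covers constructed above meet only the weaker bound. This still forces the assembly map to be bijective for $n \le 0$ and surjective for $n = 1$, but removing the restriction would require refining the flow-space construction so as to obtain the stronger transfer data for all of $K$-theory.
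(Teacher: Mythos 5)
Your outline reproduces the strategy of the cited proof (Bartels--L\"uck, \emph{The Borel Conjecture for hyperbolic and $\CAT(0)$-groups}), which is also the method the paper sketches in Section~\ref{sec:Methods_of-Proof}: controlled algebra and transfer reducibility, the flow space of generalized geodesics on the $\CAT(0)$-space, long thin $\VCyc$-covers with a dimension bound independent of the scale, and the weaker transfer data accounting for the restriction to degrees $n \le 1$ in $K$-theory. The paper itself gives no independent argument but refers to exactly this proof, so your proposal is essentially the same approach, correctly identifying the long thin cover theorem and the uniform dimension bound as the crux.
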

\begin{proof}
 This is proved in Bartels-L\"uck~\cite{Bartels-Lueck(2009borelhyp)}.
\end{proof}

For the proofs that the Farrell-Jones Conjecture implies the conjectures
mentioned in Section~\ref{sec:Some_well-known_conjectures}, it suffices to know
the statements appearing in Theorem~\ref{the:FJC_for_CAT(0)_groups}. For instance
Theorem~\ref{the:FJC_for_CAT(0)_groups} implies the Borel Conjecture for every
closed aspherical manifold of dimension $\ge 5$ whose fundamental group is a
$\CAT(0)$-group.

%%%%%%%%%%%%%%%%%%%%%%%%%%%%%%%%%%%%%%%%%%%%%%%%%%%%%%%%%%%%%%%%%%%%%%%%%%%%%%%%%%%%%%%%%

\subsection{Inheritance properties}
\label{sec:Inheritance_Properties}

We have already mentioned that the version of the Farrell-Jones Conjecture with
coefficients in additive categories (see
Remark~\ref{rem:coefficients_in_additive_categories}) does not only include
twisted group rings and allow one to insert orientation homomorphisms, but it
also has very valuable inheritance properties.

\begin{theorem}[Inheritance properties]
 \label{the:inheritance_properties}
 Let (A) be one of the following assertions for a group $G$:

 \begin{itemize}

 \item The $K$-theoretic Farrell-Jones Conjecture with coefficients in additive
   categories (see Remark~\ref{rem:coefficients_in_additive_categories}) holds
   for $G$;

 \item The $K$-theoretic Farrell-Jones Conjecture with coefficients in additive
   categories (see Remark~\ref{rem:coefficients_in_additive_categories}) holds
   for $G$ up to degree one, i.e., the assembly map is bijective in dimension
   $n \le 0$ and surjective for $n = 1$;

 \item The $L$-theoretic Farrell-Jones Conjecture with coefficients in additive
   categories (see Remark~\ref{rem:coefficients_in_additive_categories}) holds
   for $G$.

 \end{itemize}

 Then the following is true:

 \begin{enumerate}

 \item \label{sec:Inheritance_Properties:subgroups} If $G$ satisfies assertion
   (A), then also every subgroup $H \subseteq G$ satisfies (A);

 \item \label{sec:Inheritance_Properties:products} If $G_1$ and $G_2$ satisfies
   assertion (A), then also the free product $G_1 \ast G_2$ and the direct
   product $G_1 \times G_2$ satisfy assertion (A);

 \item \label{sec:Inheritance_Properties:extensions} Let $\pi \colon G \to Q$
   be a group homomorphism.  If $Q$ satisfies (A) and for every virtually cyclic subgroup $V
   \subseteq Q$, its preimage $\pi^{-1}(V)$ satisfies (A), then $G$ satisfies
   assertion (A);

 \item \label{sec:Inheritance_Properties:colimits} Let $\{ G_i \; | \; i \in I
   \}$ be a directed system of groups (with not necessarily injective structure
   maps). If each $G_i$ satisfies assertion (A), then the colimit $\colim_{i
     \in I} G_i$ satisfies assertion (A).

 \end{enumerate}
\end{theorem}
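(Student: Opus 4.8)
The plan is to handle the three versions of assertion (A) simultaneously, using two structural features of the equivariant homology theories that underlie the $K$- and $L$-theoretic Farrell-Jones Conjecture with coefficients in an additive category (see Remark~\ref{rem:coefficients_in_additive_categories}): each such theory carries an \emph{induction structure}, so that it is compatible with restriction along group homomorphisms, and it is \emph{continuous}, i.e.\ it commutes with directed colimits of groups --- the latter ultimately because $K$- and $L$-theory of additive categories commute with directed colimits of additive categories. On top of this I would invoke two general principles. The first is the \emph{transitivity principle}: for families $\calf \subseteq \calg$ of subgroups of $G$, if the Meta-Conjecture~\ref{con:Meta-Conjecture} holds for $(G,\calg)$ and also for each $H\in\calg$ with respect to $\calf$ (restricted to $H$), then it holds for $(G,\calf)$. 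The second is that the Meta-Conjecture is \emph{inherited along a surjection} $\pi\colon G\to Q$: if it holds for $Q$ with respect to a family $\calf$, then it holds for $G$ with respect to the preimage family $\pi^*\calf:=\{H\le G\mid \pi(H)\in\calf\}$, with coefficients pulled back along $\pi$. It is precisely at this last step that the passage to additive category coefficients (equivalently, to the fibered version) is indispensable: for a fixed ring the preimage coefficients are the ``wrong'' ones, and no such inheritance is available. All of these tools respect a bound on the degrees under consideration, so the $K$-theory-up-to-degree-one variant of (A) runs in parallel with the other two throughout.

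I would prove~\ref{sec:Inheritance_Properties:subgroups} first. Given $H\le G$, one checks that $\mathrm{res}^G_H\EGF{G}{\VCyc}$ is a model for $\EGF{H}{\VCyc}$: its isotropy groups are virtually cyclic subgroups of $G$ contained in $H$, hence virtually cyclic in $H$, and its fixed-point sets under all virtually cyclic subgroups of $H$ are contractible. The induction structure then identifies the assembly map for $H$ with the restriction of that for $G$, so (A) for $G$ yields (A) for $H$. Next~\ref{sec:Inheritance_Properties:extensions}: given $\pi\colon G\to Q$, I would replace $Q$ by $\pi(G)$ --- still satisfying (A) by~\ref{sec:Inheritance_Properties:subgroups}, and with the hypothesis on preimages of virtually cyclic subgroups unaffected --- so that $\pi$ is surjective. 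Put $\calg:=\pi^*\VCyc$, a family containing $\VCyc$ since a virtually cyclic group has only virtually cyclic quotients. Inheritance along $\pi$ converts (A) for $Q$ into the Meta-Conjecture for $(G,\calg)$, and every $H\in\calg$ is contained in $\pi^{-1}(\pi(H))$ with $\pi(H)$ virtually cyclic, hence satisfies (A) by the hypothesis together with~\ref{sec:Inheritance_Properties:subgroups}; the transitivity principle for $\VCyc\subseteq\calg$ then gives (A) for $G$. Now~\ref{sec:Inheritance_Properties:products}: the direct product falls out of~\ref{sec:Inheritance_Properties:extensions} applied twice, first to $G_1\times G_2\to G_1$ and then to $V\times G_2\to G_2$, reducing matters to groups $V\times W$ with $V$ and $W$ virtually cyclic; such a group is finitely generated virtually abelian, in particular a $\CAT(0)$-group, a class for which (A) is known (see Theorem~\ref{the:FJC_for_CAT(0)_groups}). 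The free product $G_1\ast G_2$ is an amalgam over the trivial group; I would dispose of it by the inheritance of (A) under amalgamations, which follows from a Mayer-Vietoris argument over the Bass-Serre tree together with the transitivity principle, using that the vertex stabilizers are conjugates of $G_1$ or $G_2$ and hence satisfy (A) by~\ref{sec:Inheritance_Properties:subgroups}.

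Finally~\ref{sec:Inheritance_Properties:colimits}, with $G=\colim_{i\in I}G_i$ and structure maps $\phi_i\colon G_i\to G$. The idea is to assemble a model for $\EGF{G}{\VCyc}$ compatibly out of classifying spaces for the families $\phi_i^*\VCyc$ of the $G_i$, and then to use continuity to present the assembly map for $G$ as a directed colimit of maps built from those of the $G_i$; since a directed colimit of isomorphisms --- respectively, of maps bijective in degrees $\le 0$ and surjective in degree $1$ --- again has that property, $G$ satisfies (A). The hard part, which I expect to be the main obstacle in the whole theorem, is that the $\phi_i$ need not be injective, so one cannot simply present $G$ as an increasing union of groups already known to satisfy (A): a quotient of a group satisfying the Farrell-Jones Conjecture is not known to satisfy it. One has instead to carry the continuity argument out at the level of the homology theories, carefully comparing $\EGF{G}{\VCyc}$ and the families $\phi_i^*\VCyc$ with the corresponding data for the $G_i$; it is this comparison, rather than any single construction, where the real work lies.
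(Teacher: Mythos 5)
Your overall architecture --- induction structures and induced coefficient categories for restriction to subgroups, inheritance along surjections with pulled-back coefficients, the transitivity principle, and continuity for directed colimits --- is exactly the machinery behind the paper's proof, which is simply a citation of \cite[Lemma~2.3]{Bartels-Lueck(2009borelhyp)} and, through it, of the coefficient and colimit papers; your sketches of~\ref{sec:Inheritance_Properties:subgroups}, \ref{sec:Inheritance_Properties:extensions} and~\ref{sec:Inheritance_Properties:colimits} follow that route. The genuine problems are both in~\ref{sec:Inheritance_Properties:products}. Your free product argument would fail: the Bass-Serre tree $T$ of $G_1 \ast G_2$ is a model for $\EGF{G_1 \ast G_2}{\calf}$ only for the family $\calf$ of subgroups subconjugate to $G_1$ or $G_2$; this family does not contain $\VCyc$, so the transitivity principle cannot be applied with it, and the statement your Mayer-Vietoris argument would produce --- that $H_n^{G_1 \ast G_2}(T) \to H_n^{G_1 \ast G_2}(\pt)$ is bijective --- is precisely the isomorphism conjecture for $(G_1 \ast G_2,\calf)$, which is false in general: already for $G_1 = G_2 = \IZ/2$ and general coefficients the relative assembly map from $\calf = \Fin$ to $\VCyc$ has non-trivial cokernel, given by Waldhausen Nil-terms in $K$-theory and Cappell's $\mathrm{UNil}$-terms in $L$-theory, carried by the infinite virtually cyclic subgroups of the infinite dihedral group that are not subconjugate to the factors. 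This is exactly why $\VCyc$ and not $\Fin$ appears in the Farrell-Jones Conjecture. A repair consistent with your other steps: apply your extension criterion to the canonical epimorphism $\pi\colon G_1 \ast G_2 \to G_1 \times G_2$; its kernel is free, and by the Kurosh subgroup theorem $\pi^{-1}(V)$ for virtually cyclic $V$ is a free product of a free group with virtually cyclic groups, whose finitely generated subgroups are hyperbolic, so Theorem~\ref{the:FJC_for_hyperbolic_groups} combined with~\ref{sec:Inheritance_Properties:subgroups} and~\ref{sec:Inheritance_Properties:colimits} applies.

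For the direct product there is a gap in one of the three cases of (A). The reduction to $V \times W$ with $V,W$ virtually cyclic is the standard one, but the final appeal to Theorem~\ref{the:FJC_for_CAT(0)_groups} yields only the $L$-theoretic statement and the $K$-theoretic statement up to degree one; since $V \times W$ is in general virtually $\IZ^2$, hence not hyperbolic, Theorem~\ref{the:FJC_for_hyperbolic_groups} does not help either, so your argument does not establish the full $K$-theoretic version of (A) for $G_1 \times G_2$ in degrees $\ge 2$. Here one needs a separate input, namely the $K$-theoretic conjecture with coefficients in all degrees for finitely generated virtually abelian groups, which in the literature comes from Farrell-Hsiang/Quinn-type hyperelementary induction (cf.\ the cited preprint of Quinn) and is not supplied by your sketch; via the repair above the same dependency also enters the free product case in full $K$-theory. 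Two smaller points: in~\ref{sec:Inheritance_Properties:subgroups} the identification of the $H$-assembly map with a $G$-assembly map requires inducing the coefficient additive category from $H$ to $G$, so additive-category coefficients are indispensable there as well, not only for the surjection step; and both the surjection principle and continuity are substantial theorems of the cited papers rather than formal facts, which is acceptable at this level given that the paper's own proof is a citation.
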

\begin{proof}
 See~\cite[Lemma~2.3]{Bartels-Lueck(2009borelhyp)}.
\end{proof}

%%%%%%%%%%%%%%%%%%%%%%%%%%%%%%%%%%%%%%%%%%%%%%%%%%%%%%%%%%%%%%%%%%

\subsection*{Examples}

Let $\FJ$ be the class of groups satisfying both the $K$-theoretic and
$L$-theoretic Farrell-Jones Conjecture with additive categories as coefficients
(see Remark~\ref{rem:coefficients_in_additive_categories}).  Let $\FJ_{\le 1}$ be the
class of groups which satisfy the $L$-theoretic Farrell-Jones Conjecture with
additive categories as coefficients and the $K$-theoretic Farrell-Jones
Conjecture with additive categories as coefficients up to degree one.

In view of the results above, these classes contain many groups which
lie in the region \emph{Hic Abundant Leones} in Martin Bridson's
universe of groups (see~\cite{Bridson(2006ICM)}).
Theorem~\ref{the:FJC_for_hyperbolic_groups} and
Theorem~\ref{sec:Inheritance_Properties}~\ref{sec:Inheritance_Properties:colimits}
imply that directed colimits of hyperbolic groups belong to $\FJ$.  This class
of groups contains a number of groups with unusual properties.  Counterexamples
to the Baum-Connes Conjecture with coefficients are groups with
expanders~\cite{Higson-Lafforgue-Skandalis(2002)}.  The only known construction
of such groups is as a directed colimit of hyperbolic groups
(see~\cite{Arzhantseva-Delzant(2008)}).  Thus the Farrell-Jones Conjecture in
$K$- and $L$-theory holds for the only presently known counterexamples to the
Baum-Connes Conjecture with coefficients.  (We remark that the formulation of
the Farrell-Jones Conjecture we are considering allows for twisted group rings,
so this includes the correct analog of the Baum-Connes Conjecture with
coefficients.)  The class of directed colimits of hyperbolic groups contains, for
instance, a torsionfree non-cyclic group all whose proper subgroups are cyclic,
constructed by Ol'shanskii~\cite{Olshanskii(1979)}.  Further examples are lacunary groups
(see~\cite{Olshanskii-Osin-Sapir(2009)}).

Davis and Januszkiewicz used Gromov's hyperbolization technique to construct
exotic aspherical manifolds.  They showed that for every $n \geq 5$
there are closed aspherical $n$-dimensional manifolds such that
their universal covering is a
$\CAT(0)$-space whose fundamental group at infinity is
non-trivial~\cite[Theorem~5b.1]{Davis-Januszkiewicz(1991)}.  In particular,
these universal coverings are not homeomorphic to Euclidean space.  Because these
examples are non-positively curved polyhedron, their fundamental
groups are $\CAT(0)$-groups and belong to $\FJ_{\le 1}$.  There is
a variation of this construction that uses the strict hyperbolization of
Charney-Davis~\cite{Charney-Davis(1995)} and produces closed aspherical
manifolds whose universal cover is not homeomorphic to Euclidean space and whose
fundamental group is hyperbolic.  All of these examples are topologically rigid.

\emph{Limit groups} in the sense of Zela have been a
focus of geometric group theory in recent years.  
Alibegovi\'c-Bestvina~\cite{Alibegovic+Bestvina(2006)} have shown that limit groups are
$\CAT(0)$-groups.

Let $G$ be a (not necessarily cocompact) lattice in $SO(n,1)$, e.g., the
fundamental group of a hyperbolic Riemannian manifold with finite volume.  Then
$G$ acts properly cocompactly and isometrically on a $\CAT(0)$-space
by~\cite[Corollary~11.28 in Chapter~II.11 on page~362]{Bridson-Haefliger(1999)},
and hence belongs to $\FJ_{\le1}$.

%%%%%%%%%%%%%%%%%%%%%%%%%%%%%%%%%%%%%%%%%%%%%%%%%%%%%%%%%%%%%%%%%%%%%%%%%%%%%%%%%%%%%%%%%
%%%%%%%%%%%%%%%%%%%%%%%%%%%%%%%%%%%%% Section 5 %%%%%%%%%%%%%%%%%%%%%%%%%%%%%%%%%%%%%%%%%
%%%%%%%%%%%%%%%%%%%%%%%%%%%%%%%%%%%%%%%%%%%%%%%%%%%%%%%%%%%%%%%%%%%%%%%%%%%%%%%%%%%%%%%%%

\typeout{---------------------------------------  Section 5:  --------------------------}

\section{Computational aspects}
\label{sec:Computational_aspects}

It is very hard to compute $K_n(RG)$ or $L_n^{\langle -\infty \rangle}(RG)$
directly. It is easier to compute the source of the assembly map appearing in
the Farrell-Jones Conjecture~\ref{con:Farrell-Jones-Conjecture}, since one can
apply standard techniques for the computation of equivariant homology theories
and there are often nice models for $\eub{G}$. Rationally, equivariant Chern
characters, as developed in~\cite{Lueck(2002b),Lueck(2002d),Lueck(2005c)} give
rather general answers. We illustrate this with the following result taken
from~\cite[Example~8.11]{Lueck(2002b)}.

\begin{theorem} \label{the:complex_coefficients} Let $G$ be a group for which
 the Farrell-Jones Conjecture~\ref{con:Farrell-Jones-Conjecture} holds for $R =
 \IC$.  Let $T$ be the set of conjugacy classes $(g)$ of elements $g \in G$ of
 finite order. For an element $g \in G$, denote by $C_G\langle g \rangle$ the
 centralizer of $g$. Then we obtain isomorphisms
 \begin{eqnarray*}
   \bigoplus_{p+q=n} \bigoplus_{(g) \in T}
   H_p(C_G\langle g \rangle;\IC) \otimes_{\IZ}  K_q(\IC) & \to &
   \IC \otimes_{\IZ} K_n(\IC G);
   \\
   \bigoplus_{p+q=n} \bigoplus_{(g) \in T}
   H_p(C_G\langle g \rangle;\IC) \otimes_{\IZ}  L_q^{\langle -\infty \rangle}(\IC) & \to &
   \IC \otimes_{\IZ} L_n^{\langle -\infty \rangle}(\IC G),
 \end{eqnarray*}
 where we use the involutions coming from complex
 conjugation in the definition of $L_q^{\langle -\infty \rangle}(\IC)$ and
 $L_n^{\langle -\infty \rangle}(\IC G)$.
\end{theorem}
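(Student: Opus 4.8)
The plan is to combine the Farrell--Jones Conjecture for $R = \IC$ with an equivariant Chern character computation for the source of the assembly map. First I would invoke the hypothesis that Conjecture~\ref{con:Farrell-Jones-Conjecture} holds for $G$ and $R = \IC$; this identifies $K_n(\IC G)$ with $H_n^G(\edub{G};\bfK_{\IC})$ and $L_n^{\langle -\infty\rangle}(\IC G)$ with $H_n^G\bigl(\edub{G};\bfL^{\langle -\infty\rangle}_{\IC}\bigr)$. The next step is to reduce the family from $\VCyc$ to $\Fin$: since $\IC$ is regular and contains $\IQ$, the relative assembly map from $\eub{G} = \EGF{G}{\Fin}$ to $\edub{G} = \EGF{G}{\VCyc}$ is an isomorphism in both $K$- and $L$-theory (see Remark~\ref{rem:The_choice_of_the_family_aVcyc}); in $L$-theory one uses in addition that $\IC$ has enough structure so that the virtually-cyclic contributions beyond the finite ones vanish after replacing the family, possibly after tensoring with $\IC$. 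Hence it suffices to compute $H_n^G(\eub{G};\bfE) \otimes_{\IZ} \IC$ for $\bfE$ the $K$- or $L$-theory spectrum over $\Or(G)$.

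For this last computation I would apply the equivariant Chern character of~\cite{Lueck(2002b),Lueck(2002d),Lueck(2005c)}. The key input is that after inverting the relevant primes — here it is enough to tensor with $\IC$ — any proper $G$-homology theory splits as a direct sum indexed by the conjugacy classes $(g)$ of elements of finite order, with the summand attached to $(g)$ being a Bredon-type homology of the classifying space of the centralizer $C_G\langle g\rangle$ (more precisely of the Weyl group $W_G\langle g\rangle = C_G\langle g\rangle/\langle g\rangle$, but the cyclic group $\langle g\rangle$ acts trivially on the relevant coefficients so this does not affect the answer rationally). Concretely, the Chern character isomorphism reads
\[
\bigoplus_{p+q=n}\ \bigoplus_{(g)\in T} H_p\bigl(C_G\langle g\rangle;\IC\bigr)\otimes_{\IZ} \pi_q\bigl(\bfE(G/\{1\})\bigr) \xrightarrow{\ \cong\ } \IC\otimes_{\IZ} H_n^G(\eub{G};\bfE),
\]
where the identification of the coefficients $\pi_q(\bfE(G/\{1\}))$ with $K_q(\IC)$, respectively $L_q^{\langle -\infty\rangle}(\IC)$, is built into the definition of $\bfK_{\IC}$ and $\bfL^{\langle -\infty\rangle}_{\IC}$ as spectra over $\Or(G)$. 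Composing this with the assembly isomorphism from the first step yields the two asserted isomorphisms. The $L$-theory statement requires one to keep track of the involution coming from complex conjugation throughout, both on $\IC$ and on $\IC G$; this is built into the construction of $\bfL^{\langle -\infty\rangle}_{\IC}$ over the orbit category, so no extra work beyond bookkeeping is needed.

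The main obstacle is the Chern character splitting itself: one must know that the rationalized (here $\IC$-ified) proper equivariant homology theory $H_n^G(-;\bfE)$ decomposes exactly as the displayed direct sum over finite-order conjugacy classes, and that the $(g)$-summand involves precisely $H_p(C_G\langle g\rangle;\IC)$ tensored with the non-equivariant coefficients. This is where the substantive machinery of~\cite{Lueck(2002b)} enters — it relies on a Mackey-functor analysis of the coefficient systems over the orbit category $\Or(G)$ and an induction argument reducing to finite subgroups, together with the fact that over $\IC$ the representation-theoretic Mackey structure becomes cohomologically trivial so that the higher Bredon homology collapses to ordinary group homology of the centralizers. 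Everything else — the Farrell--Jones identification and the family reduction — is quoted directly from the results stated earlier, and the final assembly of the pieces is a formal diagram chase.
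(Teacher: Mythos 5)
Your proposal is correct and follows essentially the same route as the paper, which deduces the statement by combining the Farrell--Jones identification with the reduction from $\VCyc$ to $\Fin$ (possible since $\IC$ is regular and contains $\IQ$, and the extra virtually cyclic contributions die after tensoring with $\IC$) and then quotes the equivariant Chern character computation, citing it directly as Example~8.11 of~\cite{Lueck(2002b)}. The only substantive content beyond what you wrote is hidden in that reference, exactly as you indicate.
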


Integral computations can only be given in special cases. For example, the
semi-direct product $\IZ^r \rtimes \IZ/n$ cannot be handled in general. Not even
its ordinary group homology is known, so it is not a surprise that the $K$- and
$L$-theory of the associated group ring are unknown in general.  Sometimes
explicit answers can be found in the literature, see for
instance~\cite[8.3]{Lueck-Reich(2005)}.  As an illustration we mention the
following result which follows from Theorem~\ref{the:FJC_for_hyperbolic_groups}
using~\cite[Theorem~1.3]{Bartels(2003b)}, and~\cite[Corollary~2.11 and
Example~3.6]{Lueck-Weiermann(2007)}.

\begin{theorem}[Torsionfree hyperbolic groups]
\label{the:torsionfree_hyperbolic_groups}
Let $G$ be a torsionfree hyperbolic group.
Let $\calm$ be a complete system of representatives of 
the conjugacy classes of maximal infinite cyclic subgroups of $G$.
\begin{enumerate}
\item \label{the:torsionfree_hyperbolic_groups:K}
For every $n \in \IZ$, there is an isomorphism
\begin{eqnarray*}
H_n\bigl(BG;\bfK(R)\bigr) \oplus \bigoplus_{V \in \calm}  N\!K_n(R) \oplus  N\!K_n(R) 
& \xrightarrow{\cong} & 
K_n(RG),
\end{eqnarray*}
where $N\!K_n(R)$ the \emph{Bass-Nil-groups of $R$};  

\item \label{the:torsionfree_hyperbolic_groups:L}
For every $n \in \IZ$, there is an isomorphism
\begin{eqnarray*}
H_n\bigl(BG;\bfL^{\langle -\infty \rangle}(R)\bigr)  
& \xrightarrow{\cong} &
L_n^{\langle -\infty \rangle}(RG).
\end{eqnarray*}
\end{enumerate}
\end{theorem}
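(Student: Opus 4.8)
The plan is to combine the Farrell--Jones Conjecture for hyperbolic groups (Theorem~\ref{the:FJC_for_hyperbolic_groups}) with the description of the classifying space $\edub{G}$ due to L\"uck--Weiermann~\cite{Lueck-Weiermann(2007)}. By Theorem~\ref{the:FJC_for_hyperbolic_groups} the assembly maps give isomorphisms $K_n(RG)\cong H_n^G(\edub{G};\bfK_R)$ and $L_n^{\langle -\infty\rangle}(RG)\cong H_n^G(\edub{G};\bfL^{\langle -\infty\rangle}_R)$, so everything reduces to computing these equivariant homology groups. Since $G$ is torsionfree, $\Fin=\Tr$, hence $\eub{G}=EG$ and $H_n^G(\eub{G};\bfK_R)\cong H_n(BG;\bfK(R))$ (and similarly for $\bfL^{\langle -\infty\rangle}_R$); thus the real task is to measure the difference between the $\VCyc$-version and the $\Tr$-version.

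First I would record the structure of virtually cyclic subgroups of a torsionfree hyperbolic group: every nontrivial element lies in a unique maximal infinite cyclic subgroup, two distinct such maximal subgroups are never commensurable, and each of them is self-commensurating in $G$ (this is essentially~\cite[Theorem~1.3]{Bartels(2003b)}). Hence the commensurability classes of infinite virtually cyclic subgroups, taken up to conjugacy, are exactly the elements of $\calm$, and $N_G[V]=V$ for each $V\in\calm$. Feeding this into the L\"uck--Weiermann construction~\cite[Corollary~2.11]{Lueck-Weiermann(2007)} produces a $G$-pushout
\[
\begin{array}{ccc}
\coprod_{V\in\calm} G\times_V EV & \longrightarrow & EG\\
\downarrow & & \downarrow\\
\coprod_{V\in\calm} G/V & \longrightarrow & \edub{G},
\end{array}
\]
where one uses that $E_{\Tr}(V)=EV$, and that for $V\cong\IZ$ every nontrivial subgroup is commensurable with $V$, so that the classifying space entering the lower left corner is $\{\bullet\}$.

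Next I would apply the $G$-homology theory $H_*^G(-;\bfK_R)$ to this pushout to obtain a Mayer--Vietoris sequence. Using the induction structure (coming from the construction out of $\Or(G)$-spectra, Remark~\ref{rem:G-homology_theories_spectra_over_Or(G)}) one rewrites $H_n^G(G\times_V X;\bfK_R)=H_n^V(X;\bfK_R)$, so the left-hand terms become $H_n^V(EV;\bfK_R)=H_n(B\IZ;\bfK(R))\cong K_n(R)\oplus K_{n-1}(R)$ (as $B\IZ\simeq S^1$) and $H_n^V(V/V;\bfK_R)=K_n(R[\IZ])$, the connecting map being the $\IZ$-assembly map for the trivial family. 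By the Bass--Heller--Swan decomposition this assembly map is split injective with cokernel $N\!K_n(R)\oplus N\!K_n(R)$; therefore the left-hand map of the Mayer--Vietoris sequence is injective, its boundary maps vanish, it breaks into short exact sequences, and a short diagram chase yields $H_n^G(\edub{G};\bfK_R)\cong H_n(BG;\bfK(R))\oplus\bigoplus_{V\in\calm}\bigl(N\!K_n(R)\oplus N\!K_n(R)\bigr)$. Together with Theorem~\ref{the:FJC_for_hyperbolic_groups} this is assertion~(i). Assertion~(ii) follows by running the identical argument with $\bfL^{\langle -\infty\rangle}_R$ in place of $\bfK_R$: now the $L$-theoretic Bass--Heller--Swan decomposition in the decoration $\langle -\infty\rangle$ has vanishing Nil part, so the relative summands disappear and $L_n^{\langle -\infty\rangle}(RG)\cong H_n(BG;\bfL^{\langle -\infty\rangle}(R))$.

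The deep input --- the Farrell--Jones Conjecture itself for hyperbolic groups --- is already available as Theorem~\ref{the:FJC_for_hyperbolic_groups}, so the remaining difficulty is structural. The step requiring the most care is the one where hyperbolicity (rather than mere torsionfreeness) is used: proving that a maximal infinite cyclic subgroup of a torsionfree hyperbolic group coincides with its commensurator, which is what collapses the L\"uck--Weiermann pushout to the clean form above. The other point to watch is keeping the decoration $\langle -\infty\rangle$ consistent through the $L$-theoretic Bass--Heller--Swan decomposition, since for the decorations $h$, $s$ or $p$ the $\mathrm{UNil}$-groups of Cappell would contribute and assertion~(ii) would fail.
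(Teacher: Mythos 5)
Your proposal is correct and is essentially the paper's own (very condensed) proof: the paper deduces the theorem from Theorem~\ref{the:FJC_for_hyperbolic_groups} together with \cite[Theorem~1.3]{Bartels(2003b)} and \cite[Corollary~2.11 and Example~3.6]{Lueck-Weiermann(2007)}, and your L\"uck--Weiermann pushout plus Mayer--Vietoris/Bass--Heller--Swan argument is exactly what those citations encapsulate. Only a small attribution slip: the fact that maximal infinite cyclic subgroups of a torsionfree hyperbolic group are self-commensurating is the content of \cite[Example~3.6]{Lueck-Weiermann(2007)}, whereas \cite[Theorem~1.3]{Bartels(2003b)} is the split injectivity of the relative $\Fin$-to-$\VCyc$ assembly map (and, peripherally, in the torsionfree case the decoration issue in $L$-theory comes from Nil/Rothenberg terms rather than Cappell's UNil, since there are no infinite dihedral subgroups).
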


Computations of $L$-groups of group rings are important in the classification of
manifolds since they appear in the surgery sequence~\eqref{surgery_sequence}.

%%%%%%%%%%%%%%%%%%%%%%%%%%%%%%%%%%%%%%%%%%%%%%%%%%%%%%%%%%%%%%%%%%%%%%%%%%%%%%%%%%%%%%%%%
%%%%%%%%%%%%%%%%%%%%%%%%%%%%%%%%%%%%% Section 6 %%%%%%%%%%%%%%%%%%%%%%%%%%%%%%%%%%%%%%%%%
%%%%%%%%%%%%%%%%%%%%%%%%%%%%%%%%%%%%%%%%%%%%%%%%%%%%%%%%%%%%%%%%%%%%%%%%%%%%%%%%%%%%%%%%%

\typeout{---------------------------------------  Section 6:  --------------------------}

\section{Methods of proof}
\label{sec:Methods_of-Proof}

Here is a brief sketch of the strategy of proof which has led to the results
about hyperbolic groups and $\CAT(0)$-groups mentioned above.  It is influenced
by ideas of Farrell and Jones. However, we have to deal with spaces that are not
manifolds, and hence new ideas and techniques are required. A more detailed
survey about methods of proof can be found
in~\cite[Section~1]{Bartels-Lueck(2009borelhyp)},~\cite[Section~1]{Bartels-Lueck-Reich(2008cover)},%
~\cite[Section~1]{Bartels-Lueck-Reich(2008hyper)},~\cite{Lueck(2009Hangzhou)}
and~\cite[Chapter~7]{Lueck-Reich(2005)}.

\subsubsection*{Assembly and forget control}
We have defined the assembly map appearing in the Farrell-Jones Conjecture as a
map induced by the projection $\edub{G} \to G/G$. A homotopy theoretic
interpretation by homotopy colimits and a description in terms of the universal
property that it is the best approximation from the left by a homology theory is
presented in \cite{Davis-Lueck(1998)}. This interpretation is good for
structural and computational aspects but is not helpful for actual proofs. For
this purpose the interpretation of the assembly map as a \emph{forget control
  map} is the right one. This fundamental idea is due to Quinn.

Roughly speaking, one attaches to a metric space certain categories, to these
categories spectra and then takes their homotopy groups, where everything
depends on a choice of certain control conditions which in some sense measure
sizes of cycles.  If one requires certain control conditions, one obtains the
source of the assembly map.  If one requires no control conditions, one obtains the
target of the assembly map.  The assembly map itself is forgetting the control
condition.

One of the basic features of a homology theory is excision. It often comes from
the fact that a representing cycle can be found with arbitrarily good control.
An example is the technique of subdivision which allows to make the representing
cycles for simplicial homology arbitrarily controlled. That is, the diameter of
any simplex appearing with non-zero coefficient is very small.  One may say that
requiring control conditions amounts to implementing homological properties.

With this interpretation it is clear what the main task in the proof of
surjectivity of the assembly map is: \emph{achieve control}, i.e., manipulate
cycles without changing their homology class so that they become sufficiently
controlled.  There is a general principle that a proof of surjectivity also
gives injectivity, Namely, proving injectivity means that one must construct a
cycle whose boundary is a given cycle, i.e., one has to solve a surjectivity
problem in a relative situation.

\subsubsection*{Contracting maps and open coverings}
Contracting maps on suitable control spaces are very useful for gaining
control. The idea is that the contraction improves the control of a cycle without
changing its homology class if the contracting map is, roughly speaking, homotopic
to the identity.  Of course one has to choose the contracting maps 
and control spaces with care. If a $G$-space $X$ has a fixed
point, the projection to this fixed point is a contracting $G$-equivariant map,
but it turns out that this is just enough to prove the trivial
version of the Meta Conjecture, where the family $\calf$ is not $\VCyc$ as
desired, but rather consists of all subgroups.

Let $\calf$ be a family of subgroups and let $X$ be a metric space with an
isometric $G$-action.  An \emph{$\calf$-covering} $\calu$ is an open covering
$\calu$ such that $gU \in \calu$ holds for $U \in \calu, g \in G$, for every $U
\in \calu$ and $g \in G$ we have $gU \cap U \not= \emptyset \implies gU = U$,
and for every $U \in \calu$ the subgroup $G_U = \{g \in G \mid gU = U\}$ belongs
to $\calf$.  Associated to these data there is a map $f_{\calu} \colon X \to
|\calu|$ from $X$ to the simplicial nerve of $\calu$.  The larger the Lebesgue
number of $\calu$ is, the more contracting the map becomes with respect to the
$L^1$-metric on $|\calu|$, provided we are able to fix a uniform bound on its
covering dimension (see~\cite[Proposition~5.3]{Bartels-Lueck-Reich(2008hyper)}).

Notice that the simplicial nerve carries a $G$-$CW$-complex structure and all
its isotropy groups belong to $\calf$. We see that $\calf$-coverings can yield
contracting maps, as long as the covering dimension of the possible $\calu$ are
uniformly bounded.

An axiomatic description of the properties such an equivariant covering has to
fulfill can be found in~\cite[Section~1]{Bartels-Lueck-Reich(2008hyper)} and
more generally in~\cite[Section~1]{Bartels-Lueck(2009borelhyp)}.  The
equivariant coverings satisfy conditions that are similar to those for finite
asymptotic dimension, but with extra requirements about equivariance.  A key
technical paper for the construction of such equivariant coverings
is~\cite{Bartels-Lueck-Reich(2008cover)}, where the connection to asymptotic
dimension is explained.

\subsubsection*{Enlarging $G$ and transfer}
Let us try to find $\calf$-coverings for $G$ considered as a metric
space with the word metric. If we take $\calu = \{G\}$, we obtain a
$G$-invariant open covering with arbitrarily large Lebesgue number, but
the open set $G$ is an $\calf$-set only if we take $\calf$ to be the
family of all subgroups.  If we take $\calu = \bigl\{\{g\} \mid g \in
G\bigr\}$ and denote by $\Tr$ the family consisting only of the trivial subgroup, 
we obtain a $\Tr$-covering of topological dimension zero,
but the Lebesgue number is not very impressive, it's just $1$. In order to
increase the Lebesgue number, we could take large balls around each
element. Since the covering has to be $G$-invariant, we could start
with $\calu = \bigl\{B_R(g) \mid g \in G\bigr\}$, where $B_R(g)$ is
the open ball of radius $R$ around $g$.  This is a $G$-invariant open
covering with Lebesgue number $R$, but the sets $B_R(g)$ are not
$\calf$-sets in general and the covering dimension grows with $R$.

One of the main ideas is not to cover $G$ itself, but to enlarge $G$ to
$G \times \overline{X}$ for an appropriate compactification
$\overline{X}$ of a certain contractible metric space $X$ that has an
isometric proper cocompact $G$-action.  This allows us to spread out the open
sets and avoid having too many intersections. This strategy has also
been successfully used in measurable group theory, where the role of
the topological space $\overline{X}$ is played by a probability space
with measure preserving $G$-action (see
Gromov~\cite[page~300]{Gromov(2007)}).

The elements under consideration lie in $K$- or $L$-theory spaces
associated to the control space $G$.  Using a transfer they can be lifted
to $G \times \overline{X}$. (This step
corresponds in the proofs of Farrell and Jones to the passage to the
sphere tangent bundle.) We gain control there and then push the
elements down to $G$. Since the space $\overline{X}$ is contractible,
its Euler characteristic is $1$ and hence the composite of the
push-down map with the transfer map is the identity on the $K$-theory
level.  On the $L$-theory level one needs something with signature $1$.
On the algebra level this
corresponds to the assignment of a finitely generated projective
$\IZ$-module $P$ to its \emph{multiplicative hyperbolic form $H_{\otimes}(P)$}.
It is given by replacing $\oplus$ by $\otimes$ in the standard definition of a hyperbolic form, 
i.e., the underlying $\IZ$-module is $P^*
\otimes P$ and the symmetric form is given by the formula $(\alpha,p)
\otimes (\beta,q) \mapsto \alpha(q) \cdot \beta(p)$. Notice that the
signature of $H_{\otimes}(\IZ)$ is $1$ and taking the multiplicative hyperbolic
form yields an isomorphism of rings $K_0(\IZ) \to L^0(\IZ)$.

We can construct $\VCyc$-coverings that are contracting in the
$G$-direction but will actually expand in the
$\overline{X}$-direction.  The latter defect can be compensated for
because the transfer yields elements over $G \times \overline{X}$ with
arbitrarily good control in the $\overline{X}$-direction.

\subsubsection*{Flows} To find such coverings of $G \times
\overline{X}$, it is crucial to construct, for hyperbolic and
$\CAT(0)$-spaces, flow spaces $\operatorname{FS}(X)$ which are the
analog of the geodesic flow on a simply connected
Riemannian manifold with negative or non-positive sectional
curvature. One constructs appropriate
coverings on $\operatorname{FS}(X)$, often called \emph{long and thin coverings}, 
and then pulls them back with a
certain map $G \times \overline{X} \to \operatorname{FS}(X)$. 
The flow is used to improve a
given covering.  The use flow spaces to gain control is one of the fundamental
ideas of Farrell and Jones (see for instance~\cite{Farrell-Jones(1986a)}).

Let us look at a special example to illustrate the use of a flow.  Consider two
points with coordinates $(x_1,y_1)$ and $(x_2,y_2)$ in the upper half plane
model of two-dimensional hyperbolic space. We want to use the geodesic flow to
make their distance smaller in a functorial fashion.  This is achieved by
letting these points flow towards the boundary at infinity along the geodesic
given by the vertical line through these points, i.e., towards infinity in the
$y$-direction. There is a fundamental problem: if $x_1 =x_2$, then the distance of these points is
unchanged. Therefore we make the following prearrangement. Suppose that $y_1 <
y_2$. Then we first let the point $(x_1,y_1)$ flow so that it reaches a position
where $y_1 = y_2$. Inspecting the
hyperbolic metric, one sees that the distance between the two points
$(x_1,\tau)$ and $(x_2,\tau)$ goes to zero if $\tau$ goes to infinity. This is
the basic idea to gain control in the negatively curved case.

Why is the non-positively curved case harder? Again, consider the upper half
plane, but this time equip it with the flat Riemannian metric coming from
Euclidean space. Then the same construction makes sense, but the distance
between two points $(x_1,\tau)$ and $(x_2,\tau)$ is unchanged if we change
$\tau$.  The basic first idea is to choose a focal point far away, say $f :=
\bigl((x_1 + x_2)/2,\tau + 169356991\bigr)$, and then let $(x_1,\tau)$ and
$(x_2,\tau)$ flow along the rays emanating from them and passing through the
focal point $f$. In the beginning the effect is indeed that the distance becomes
smaller, but as soon as we have passed the focal point the distance grows
again. Either one chooses the focal point very far away or uses the idea of
moving the focal point towards infinity while the points flow.  Roughly
speaking, we are suggesting the idea of a \emph{dog and sausage} principle. We
have a dog, and attached to it is a long stick pointing in front of it with a
delicious sausage on the end. The dog will try to reach the sausage, but the
sausage is moving away according to the movement of the dog, so the dog will
never reach the sausage. (The dog will become long and thin this way, but this
is a different effect).  The problem with this idea is obvious, we must describe
this process in a functorial way and carefully check all the estimates to
guarantee the desired effects.

%%%%%%%%%%%%%%%%%%%%%%%%%%%%%%%%%%%%%%%%%%%%%%%%%%%%%%%%%%%%%%%%%%%%%%%%%%%%%%%%%%%%%%%%%
%%%%%%%%%%%%%%%%%%%%%%%%%%%%%%%%% Section 7 %%%%%%%%%%%%%%%%%%%%%%%%%%%%%%%%%%%%%%%%%%%%%
%%%%%%%%%%%%%%%%%%%%%%%%%%%%%%%%%%%%%%%%%%%%%%%%%%%%%%%%%%%%%%%%%%%%%%%%%%%%%%%%%%%%%%%%%

\typeout{---------------------------------------  Section 7:  --------------------------}

\section{Open Problems}
\label{sec:Open_Problems}

\subsection{Virtually poly-cyclic groups, cocompact lattices and $3$-manifold
 groups}
It is conceivable that our methods can be used to show that virtually
poly-cyclic groups belong to $\FJ$ or $\FJ_{\le 1}$. This already implies the
same conclusion for cocompact lattices in almost connected Lie groups following
ideas of Farrell-Jones~\cite{Farrell-Jones(1993a)} and for fundamental groups of
(not necessarily compact) $3$-manifolds (possibly with boundary) following ideas
of Roushon~\cite{Roushon(2008FJJ3)}.

%%%%%%%%%%%%%%%%%%%%%%%%%%%%%%%%%%%%%%%%%%%%%%%%%%%%%%%%%%%%%%%%%%%%%%%%%%%%%%%%%%%%%%%%%

\subsection{Solvable groups}

Show that solvable groups belong to $\FJ$ or $\FJ_{\le 1}$.  In view of the
large class of groups belonging to $\FJ$ or $\FJ_{\le
 1}$, it is very surprising that it is not known whether a semi-direct product
$A \rtimes_{\varphi} \IZ$ for a (not necessarily finitely generated) abelian group $A$
belongs to $\FJ$ or $\FJ_{\le 1}$. The problem is the possibly complicated dynamics of
the automorphism $\varphi$ of $A$.

Such groups are easy to handle in the Baum-Connes setting, where one can use the
long exact Wang sequence for topological $K$-theory associated to a semi-direct
product. Such a sequence does not exists for algebraic $K$-theory, and new
contributions involving Nil-terms occur.

%%%%%%%%%%%%%%%%%%%%%%%%%%%%%%%%%%%%%%%%%%%%%%%%%%%%%%%%%%%%%%%%%%%%%%%%%%%%%%%%%%%%%%%%%

\subsection{Other open cases}
Show that mapping class groups, $\Out(F_n)$ and Thompson's groups belong to
$\FJ$ or $\FJ_{\le 1}$.  The point here is not that this has striking
consequence in and of itself, but rather their proofs will probably give more insight in the
Farrell-Jones Conjecture and will require some new input about the geometry
of these groups which may be interesting in its own right.

A very interesting open case is $SL_n(\IZ)$. The main obstacle is that
$SL_n(\IZ)$ does not act cocompactly isometrically properly on a
$\CAT(0)$-space; the canonical action on $SL_n(\IR)/SO(n)$ is proper and
isometric and of finite covolume but not cocompact.  The Baum-Connes Conjecture
is also open for $SL_n(\IZ)$.

%%%%%%%%%%%%%%%%%%%%%%%%%%%%%%%%%%%%%%%%%%%%%%%%%%%%%%%%%%%%%%%%%%%%%%%%%%%%%%%%%%%%%%%%%

\subsection{Searching for counterexamples}
There is no group known for which the Farrell-Jones Conjecture is false. There
has been some hope that groups with expanders may yield counterexamples, but
this hope has been dampened since colimits of hyperbolic groups satisfy it.  At
the moment one does not know any property of a group which makes it likely to
produce a counterexample. The same holds for the Borel Conjecture. Many of the
known exotic examples of closed aspherical manifolds are known to satisfy the
Borel Conjecture.

In order to find counterexamples one seems to need completely new ideas, maybe
from random groups or logic.

%%%%%%%%%%%%%%%%%%%%%%%%%%%%%%%%%%%%%%%%%%%%%%%%%%%%%%%%%%%%%%%%%%%%%%%%%%%%%%%%%%%%%%%%%

\subsection{Pseudo-isotopy}
Extend our results to pseudo-isotopy spaces. There are already interesting
results for these proved by Farrell-Jones~\cite{Farrell-Jones(1993a)}.

%%%%%%%%%%%%%%%%%%%%%%%%%%%%%%%%%%%%%%%%%%%%%%%%%%%%%%%%%%%%%%%%%%%%%%%%%%%%%%%%%%%%%%%%%

\subsection{Transfer of methods}
The Baum-Connes Conjecture is unknown for all
$\CAT(0)$-groups.  Can one use the techniques of the proof of the
Farrell-Jones Conjecture for $\CAT(0)$-groups to prove the Baum-Connes
Conjecture for them? In particular it is not at all clear how the
transfer methods in the Farrell-Jones setting carry over to the
Baum-Connes case. In the other direction, the Dirac-Dual Dirac method,
which is the main tool for proofs of the Baum-Connes Conjecture, lacks
an analog on the Farrell-Jones side.

%%%%%%%%%%%%%%%%%%%%%%%%%%%%%%%%%%%%%%%%%%%%%%%%%%%%%%%%%%%%%%%%%%%%%%%%%%%%%%%%%%%%%%%%%

\subsection{Classification of (non-aspherical) manifolds}
The Farrell-Jones Conjecture is also very useful when one considers not
necessarily aspherical manifolds. Namely, because of the surgery
sequence~\eqref{surgery_sequence}, it gives an interpretation of the structure set
as a relative homology group.  So it simplifies the classification of
manifolds substantially and opens the door to explicit answers in favorable
interesting cases. Here, a lot of work can and will have to be done.

%%%%%%%%%%%%%%%%%%%%%%%%%%%%%%%%%%%%%%%%%%%%%%%%%%%%%%%%%%%%%%%%%%%%%%%%%%%%%%%%%%%%%%%%%
%%%%%%%%%%%%%%%%%%%%%%%%%%%%%%%%%%%% References %%%%%%%%%%%%%%%%%%%%%%%%%%%%%%%%%%%%%%%%%
%%%%%%%%%%%%%%%%%%%%%%%%%%%%%%%%%%%%%%%%%%%%%%%%%%%%%%%%%%%%%%%%%%%%%%%%%%%%%%%%%%%%%%%%%

\typeout{---------------------------------------  References --------------------------}

%\bibliographystyle{abbrv}
%\bibliography{dbpub,dbpre}
%\version{24.03.2010}

\end{document}